\numberwithin{equation}{section}
\theoremstyle{plain}
\newtheorem{thm}{Theorem}[section]
\newtheorem{cor}[thm]{Corollary}
\newtheorem{lem}[thm]{Lemma}
\newtheorem{prop}[thm]{Proposition}
\theoremstyle{definition}
\newtheorem{defn}[thm]{Definition}
\theoremstyle{remark}
\newtheorem{rem}[thm]{Remark}
\newtheorem{ex}[thm]{Example}
\newcommand{\N}{\mathbb N}
\newcommand{\Z}{\mathbb Z}
\newcommand{\R}{\mathbb R}
\newcommand{\e}{\varepsilon}
\newcommand{\dist}{\mathrm{dist}}
\renewcommand{\t}{\tilde}
\renewcommand{\l}{\ell}
\newcommand{\la}{\lambda}
\newcommand{\p}{\partial}
\def\XXint#1#2#3{{\setbox0=\hbox{$#1{#2#3}{\int}$ }
\vcenter{\hbox{$#2#3$ }}\kern-.6\wd0}}
\title{Uniformizing Gromov hyperbolic spaces with Busemann functions}
\author{Clark Butler}
\begin{document}
\begin{abstract}
Given a complete Gromov hyperbolic space $X$ that is roughly starlike from a point $\omega$ in its Gromov boundary $\p_{G}X$, we use a Busemann function based at $\omega$ to construct an incomplete unbounded uniform metric space $X_{\e}$ whose boundary $\p X_{\e}$ can be canonically identified with the Gromov boundary $\p_{\omega}X$ of $X$ relative to $\omega$. This uniformization construction generalizes the procedure used to obtain the Euclidean upper half plane from the hyperbolic plane. Furthermore we show, for an arbitrary metric space $Z$, that there is a hyperbolic filling $X$ of $Z$ that can be uniformized in such a way that the boundary $\p X_{\e}$ has a biLipschitz identification with the completion $\bar{Z}$ of $Z$. We also prove that this uniformization procedure can be done at an exponent that is often optimal in the case of CAT$(-1)$ spaces. 
\end{abstract}
\maketitle

\section{Introduction}

The goal of this paper is to construct an unbounded analogue of the uniformizations of Gromov hyperbolic spaces built by Bonk, Heinonen and Koskela in their extensive study of a number of problems in conformal analysis \cite{BHK}. The most familiar special case of our procedure is the construction of the upper half-space $\{(x,y):y > 0\}$ in $\R^{2}$ from the hyperbolic plane $\mathbb{H}^{2}$, which is discussed in Example \ref{hyperbolic plane}. The guiding example in \cite{BHK}, by comparison, is the relationship between $\mathbb{H}^{2}$ and the Euclidean unit disk $\{(x,y):x^{2}+y^{2} < 1\}$. As can be seen from these examples, the input for uniformization is a geodesic Gromov hyperbolic space $X$ and the output is an incomplete metric space $\Omega$, obtained from a conformal deformation of $X$, that is \emph{uniform} in the sense of Definition \ref{def:uniform} below. The density used for uniformizing a Gromov hyperbolic space $X$ in \cite{BHK} is exponential in the distance to a fixed point $z$ of $X$. In contrast we will be using a density that is exponential in a \emph{Busemann function} associated to a particular point of the Gromov boundary of $X$. This choice of density is natural as Busemann functions are often interpreted as distance functions ``from infinity" and can themselves be used to define a boundary of the space $X$ \cite[$\S$3]{BGS}. Unlike in \cite{BHK}, we will not assume any local compactness properties on $X$, so specializing our results back to their setting yields a small generalization of their results as well. 

Our principal application of this uniformization construction will be to \emph{hyperbolic fillings} $X$ of a metric space $Z$, with a particular focus on the case in which $Z$ is unbounded. When $Z$ is bounded a hyperbolic filling $X$ of $Z$ can be thought of as a Gromov hyperbolic graph whose Gromov boundary can be canonically identified with $Z$; in the case that $Z$ is unbounded there are some additional subtleties to this notion owing to the fact that the Gromov boundary of a Gromov hyperbolic space is always bounded. We refer to the discussion prior to Theorem \ref{filling theorem} for further information on this, as well as the contents of Section \ref{sec:filling}. Our use of Busemann functions in this setting is inspired by the hyperbolic filling construction of Buyalo and Schroeder \cite[Chapter 6]{BS07} for arbitrary metric spaces $Z$. 

Our uniformization construction for hyperbolic fillings is used in a followup work \cite{Bu23} in order to establish a correspondence between Newton-Sobolev classes of functions on the hyperbolic filling of $Z$ and Besov classes of functions on $Z$ in the special case that $Z$ carries a doubling measure. This is heavily inspired by work of A. Bj\"orn, J. Bj\"orn, and Shanmugalingam \cite{BBS21} that establishes the corresponding result in the case that $Z$ is bounded. In a closely related work \cite{Bu22} we also  generalize to our setting their results \cite{BBS20} on how local Poincar\'e inequalities transform under the uniformization in \cite{BHK}. This yields some interesting new examples of uniform metric spaces satisfying Poincar\'e inequalities. There are a number of known variants on the correspondence between function spaces on the hyperbolic filling and function spaces on $Z$, see for instance \cite{BS18}, \cite{BSS18}, \cite{BP03}. Such correspondences were one of the original motivating factors in the use of hyperbolic fillings in analysis on metric spaces. For applications to trace theorems on Ahlfors regular metric spaces that demonstrate the power of these correspondences we refer to \cite{SS17}.

Lastly we remark that the idea of uniformizing Gromov hyperbolic spaces using Busemann functions has been developed independently by Zhou \cite{Z20} for the purpose of an entirely different set of applications, including a study of Teichmueller's displacement problem for quasi-isometries of Gromov hyperbolic spaces. The work \cite{Z20} in particular gives alternative proofs of the main uniformization theorems (Theorems \ref{unbounded uniformization} and \ref{identification theorem}) restricted to the case of proper Gromov hyperbolic spaces and the original range $0 < \e \leq \e_{0}$ of exponents considered by Bonk-Heinonen-Koskela (see Theorem \ref{Gehring-Hayman} below). Our applications to CAT$(-1)$ spaces and hyperbolic fillings require us to consider exponents outside this range however; this is a key point of departure from \cite{Z20}. 

Stating our main theorems require some preliminary definitions. We opt to give precise definitions in the corresponding sections throughout the paper, while mostly only outlining the necessary definitions here in the introduction. For a metric space $(X,d)$ and a curve $\gamma: I \rightarrow X$, $I \subset \R$ a subinterval, we write $\l(\gamma)$ for the length of $\gamma$ measured in $X$. We will follow the standard practice of using $\gamma$ to denote both the parametrization of the curve and the image of the curve in $X$. The curve $\gamma$ is a \emph{geodesic} if it is isometric as a mapping of $I$ into $X$. We say that $X$ is \emph{geodesic} if any two points can be joined by a geodesic. We will use the following distance notation for distance from a point $x$ to a set $E$ in any metric space $(X,d)$,
\[
\dist(x,E) = \inf_{y \in E} d(x,y),
\]
and in particular will write $\dist(x,\gamma)$ for the distance of a point $x \in X$ to (the image of) a curve $\gamma$.

%For reference later, a mapping $f: (X,d) \rightarrow (X',d')$ between metric spaces is \emph{$C$-Lipschitz} for a constant $C \geq 0$ if $d'(f(x),f(y)) \leq Cd(x,y)$ for all $x,y \in X$. We will usually be applying this notion in the case $X' = \R$ with $d'$ being the Euclidean metric on $\R$. We say that $\gamma$ is \emph{rectifiable} if $\l(\gamma) < \infty$.

We now define uniform metric spaces. We start with an \emph{incomplete} metric space $(\Omega,d)$. We denote the boundary of $\Omega$ in its completion $\bar{\Omega}$ by $\p \Omega = \bar{\Omega} \backslash \Omega$. For $x \in \Omega$ we write $d_{\Omega}(x):=\dist(x,\p \Omega)$ for the distance from $x$ to the boundary $\p \Omega$. For the definition below we allow $I = [a,b] \subset \R$ to be any closed interval, and for a curve $\gamma: I \rightarrow X$ we denote its endpoints by $\gamma_{-} := \gamma(a)$ and $\gamma_{+} := \gamma(b)$. For such an interval $I \subset \R$ we write $I_{\leq t} = \{s \in I: s\leq t\}$ and $I_{\geq t} = \{s \in I: s\geq t\}$.  

\begin{defn}\label{def:uniform}For a constant $A \geq 1$ and a closed interval $I \subset \R$, a curve $\gamma: I \rightarrow \Omega$ is \emph{$A$-uniform} if 
\begin{equation}\label{uniform one}
\l(\gamma) \leq Ad(\gamma_{-},\gamma_{+}),
\end{equation}
and if for every $t \in I$ we have
\begin{equation}\label{uniform two}
\min\{\l(\gamma|_{I\leq t}),\l(\gamma|_{I\geq t})\} \leq A d_{\Omega}(\gamma(t)). 
\end{equation}
We say that the metric space $\Omega$ is \emph{$A$-uniform} if any two points in $\Omega$ can be joined by an $A$-uniform curve. 
\end{defn}

Many reasonable domains in Euclidean space such as the unit ball or upper half-space provide natural examples of uniform metric spaces when they are equipped with the Euclidean metric. The first requirement \eqref{uniform one} implies that $A$-uniform curves minimize the distance between their endpoints up to the multiplicative constant $A$. The second requirement \eqref{uniform two} implies that if we cut $\gamma$ at any point $\gamma(t)$ then at least one of the two subcurves $\gamma|_{I_{\leq t}}$ or $\gamma|_{I_{\geq t}}$ must have length controlled by the distance $d_{\Omega}(\gamma(t))$ of $\gamma(t)$ to $\p \Omega$. We note that it is easily verified from the definitions that the property of a curve $\gamma$ being $A$-uniform is independent of the choice of parametrization of $\gamma$. For the purpose of formulating our theorems it is convenient to extend the definition of $A$-uniform curves to allow for arbitrary subintervals $I \subset \R$ and to allow the possibility $\l(\gamma) = \infty$; as this extension is somewhat technical we refer to Definition \ref{def:extend uniform} for the exact details.

\begin{rem}\label{no local compact} The definition of uniform metric spaces advanced in \cite{BHK} also requires local compactness. We follow V\"ais\"al\"a \cite{V99} in dropping the local compactness requirement, as the output of our uniformization procedure need not be locally compact in many cases of interest.  
\end{rem}

For a continuous function $\rho: X \rightarrow (0,\infty)$ we write
\[
\l_{\rho}(\gamma) = \int_{\gamma} \rho \, ds,
\]
for the line integral of $\rho$ along $\gamma$. We refer to \cite[Appendix]{BHK} for a detailed discussion of line integrals in our context. We will often refer to such a positive continuous function $\rho$ as a \emph{density} on $X$. The following definition plays a key role in the statement of our main theorems.

\begin{defn}\label{conformal factor}
Let $(X,d)$ be a geodesic metric space and let $\rho: X \rightarrow (0,\infty)$ be a density on $X$. The \emph{conformal deformation of $X$ with conformal factor $\rho$} is the metric space $X_{\rho} = (X,d_{\rho})$ with metric
\[
d_{\rho}(x,y) = \inf  \l_{\rho}(\gamma),
\]
with the infimum taken over all curves $\gamma$ joining $x$ to $y$. We say that the density $\rho$ is a \emph{Gehring-Hayman density} (abbreviated as a \emph{GH-density}) if there is a constant $M \geq 1$ such that for any $x,y \in X$ and any geodesic $\gamma$ joining $x$ to $y$ we have
\begin{equation}\label{first GH}
\l_{\rho}(\gamma) \leq M d_{\rho}(x,y). 
\end{equation}
\end{defn}

We will refer to the inequality \eqref{first GH} as the \emph{GH-inequality} and will sometimes refer to the constant $M$ as the \emph{$GH$-constant}. The terminology here is inspired by the work of Gehring-Hayman \cite{GH62}, which shows that in a simply connected hyperbolic domain $\Omega$ in the complex plane the hyperbolic geodesics minimize Euclidean length among all curves in the domain with the same end points, up to a universal multiplicative constant. Here the density $\rho$ is given by the conformal change of metric relating the Euclidean metric on $\Omega$ to the hyperbolic metric.  Note that if $X$ is a tree then the GH-inequality holds for any density $\rho$ with $M = 1$ since any path joining two points in a tree must contain the geodesic joining those points. 

We next discuss the notions we will need regarding Gromov hyperbolic spaces. Most formal definitions regarding Gromov hyperbolicity and the Gromov boundary are postponed to Section \ref{sec:hyperbolic}, as they can be found in any standard reference such as \cite{BS07}, \cite{GdH90}. A geodesic metric space $X$ is \emph{Gromov hyperbolic} if there is a $\delta \geq 0$ such that all geodesic triangles are \emph{$\delta$-thin}, meaning that for any geodesic triangle $\Delta$ each edge of $\Delta$ is contained in a $\delta$-neighborhood of the other two edges of $\Delta$. In this case we will also say that $X$ is \emph{$\delta$-hyperbolic}. We write $\p_{G}X$ for the Gromov boundary of $X$, to be defined in Section \ref{sec:hyperbolic}; for now we note that a geodesic ray $\gamma:[0,\infty) \rightarrow X$ can always be identified with an equivalence class $[\gamma] \in \p_{G}X$, but in general not every point in $\p_{G}X$ can be realized in this way. 

We consider a complete geodesic $\delta$-hyperbolic space $X$ and a geodesic ray $\gamma:[0,\infty) \rightarrow X$. The \emph{Busemann function} $b_{\gamma}: X \rightarrow \R$ associated to $\gamma$ is defined by the limit
\begin{equation}\label{first busemann definition}
b_{\gamma}(x) = \lim_{t \rightarrow \infty} d(\gamma(t),x)-t. 
\end{equation}
Using the triangle inequality and the fact that $d(\gamma(t),\gamma(0)) = t$, it's easy to check that the right side is nonincreasing in $t$ and bounded below by $-d(\gamma(0),x)$, so this limit exists. It's also easily verified that $b_{\gamma}$ is 1-Lipschitz, thus in particular is continuous. As is customary when considering Busemann functions, we will refer to any translate $b = b_{\gamma} + s$ of $b_{\gamma}$ for a constant $s \in \R$ as a Busemann function as well. We write
\begin{equation}\label{extension busemann definition}
\mathcal{B}(X) = \{b_{\gamma}+s: \text{$\gamma$ a geodesic ray in $X$, $s \in \R$}\},
\end{equation}
for the set of all Busemann functions on $X$. Given a Busemann function $b = b_{\gamma} + s$ we will write $\omega_{b} = [\gamma] \in \p_{G}X$ for the point in the Gromov boundary determined by the geodesic ray $\gamma$. We will refer to $\omega_{b}$ as the \emph{basepoint} of $b$ and say that $b$ is \emph{based at $\omega_{b}$}. 

To state our theorems in their appropriate generality it is useful to augment the set $\mathcal{B}(X)$ with the distance functions on $X$: for $z \in X$ we write $b_{z}(x) = d(x,z)$ for the distance function to $z$ and write
\begin{equation}\label{extension distance definition}
\mathcal{D}(X) = \{b_{z}+s: \text{$z \in X$, $s \in \R$}\},
\end{equation}
for the set of all translates of distance functions on $X$. We then write $\hat{\mathcal{B}}(X) = \mathcal{B}(X) \cup \mathcal{D}(X)$. In the case $b = b_{z} + s$ we write $\omega_{b} = z$ and refer to $\omega_{b}$ as the basepoint of $b$ as well. The defining formula \eqref{first busemann definition} for Busemann functions shows that any $b \in \mathcal{B}(X)$ can be realized as a pointwise limit of functions $b_{t} \in \mathcal{D}(X)$ defined by $b_{t}(x) = d(\gamma(t),x)-t$.

Given $b \in \hat{\mathcal{B}}(X)$ and $\e > 0$ we define a density $\rho_{\e,b}$ on $X$ by 
\[
\rho_{\e,b}(x) = e^{-\e b(x)}.
\]
We write $X_{\e,b} = (X,d_{\e,b})$ for the conformal deformation of $X$ with conformal factor $\rho_{\e,b}$. In the theorem below we will be assuming that $X$ is \emph{$K$-roughly starlike} from the basepoint $\omega_{b}$ of $b$. This is a technical condition on geodesics starting from $\omega_{b}$ that is described in Definition \ref{def:rough star}. The main purpose of this hypothesis is to rule out cases such as trees that have arbitrarily long finite branches.  This $K$-rough starlikeness condition will be satisfied with $K = \frac{1}{2}$ in our application of Theorem \ref{unbounded uniformization} to hyperbolic fillings in Theorem \ref{filling theorem}. 

\begin{thm}\label{unbounded uniformization}
Let $X$ be a complete geodesic $\delta$-hyperbolic space and let $b \in \hat{\mathcal{B}}(X)$ be given. We suppose that $X$ is $K$-roughly starlike from $\omega_{b}$ and that $\e > 0$ is given such that $\rho_{\e,b}$ is a GH-density with constant $M$. 

Then geodesics in $X$ are $A$-uniform curves in $X_{\e,b}$, with $A = A(\delta,K,\e,M)$. Consequently $X_{\e,b}$ is an $A$-uniform metric space. Furthermore $X_{\e,b}$ is bounded if and only if $b \in \mathcal{D}(X)$. 
\end{thm}

In this statement and all subsequent ones the notation $A = A(\delta,K,\e,M)$ is used to indicate that a particular constant depends on the indicated parameters. We refer to Definition \ref{def:extend uniform} for the extension of the definition of $A$-uniform curves that is necessary to cover the case of an arbitrary geodesic in $X$; Definition \ref{def:uniform} only covers the case of geodesics defined on closed intervals. The claim that $X_{\e,b}$ is bounded if and only if $b \in \mathcal{D}(X)$ does not require either the rough starlikeness hypothesis or the assumption that $\rho_{\e,b}$ is a GH-density, see Proposition \ref{bounded equivalence}.

We describe the motivating example for Theorem \ref{unbounded uniformization} in the case $b \in \mathcal{B}(X)$ below. 

\begin{ex}\label{hyperbolic plane}
Let $\mathbb{U}^{2} = \{(x,y) \in \R^{2}: y > 0\}$ be the upper half space in $\R^{2}$ equipped with the Euclidean metric, which is easily seen to be a uniform metric space. Let $\mathbb{H}^{2}$ denote the upper half plane model of the hyperbolic plane, which is $\mathbb{U}^{2}$ equipped with the Riemannian metric $ds^{2} = \frac{dx^{2}+dy^{2}}{y^{2}}$. Define $\gamma: [0,\infty) \rightarrow \mathbb{H}^{2}$ by $\gamma(t) = (0,e^{t})$. Then $\gamma$ is a geodesic ray in $\mathbb{H}^{2}$. 

From explicit formulas for the hyperbolic distance in this model (see for instance \cite[A.3]{BS07}) it is straightforward to calculate that the associated Busemann function is given by $b_{\gamma}(x,y) = -\log y$. Setting $\e = 1$, the density $\rho_{1,b_{\gamma}}$ is thus simply given by $\rho_{1,b_{\gamma}}(x,y) = y$. Therefore the uniformized metric space $\mathbb{H}^{2}_{1,b_{\gamma}}$ is isometric to $\mathbb{U}^{2}$. We also remark that the GH-inequality \eqref{first GH} for the density $\rho_{1,b_{\gamma}}$ can easily be verified using the standard representation of geodesics in the upper half-plane model for $\mathbb{H}^{2}$ as subsegments of semicircles or vertical lines orthogonal to the horizontal line $\{y = 0\}$ in $\R^{2}$.  
\end{ex}

A metric space $(X,d)$ is \emph{proper} if its closed balls are compact. In the case $b \in \mathcal{D}(X)$, Theorem \ref{unbounded uniformization} generalizes \cite[Proposition 4.5]{BHK} as it does not require $X$ to be proper and allows for a potentially larger range of values for the parameter $\e$; this larger range will be relevant to Theorem \ref{CAT theorem}. 

%However some restrictions on the value of $\e$ are required, as if $\e$ is too large then it is possible for $\p X_{\e,b}$ to reduce to a single point \cite[Proposition 4.1]{BBS21}. 

For a $\delta$-hyperbolic space $X$ and a point $\omega \in \p_{G}X$ we write $\p_{\omega} X = \p_{G}X \backslash \{\omega\}$ for the complement of $\omega$ in the Gromov boundary of $X$. We will refer to $\p_{\omega} X$ as the \emph{Gromov boundary relative to $\omega$} for reasons that will be explained prior to Proposition \ref{convergence Busemann}. We formally extend this definition to $\omega \in X$ by defining $\p_{\omega}X = \p_{G}X$; in this case we will still refer to $\p_{\omega} X$ as the Gromov boundary relative to $\omega$, with the understanding that this simply coincides with the standard Gromov boundary for $\omega \in X$. As part of the proof of Theorem \ref{unbounded uniformization}, we will show that there is a canonical identification $\varphi_{\e,b}: \p_{\omega_{b}} X \rightarrow \p X_{\e,b}$ between the Gromov boundary of $X$ relative to $\omega_{b}$ and the boundary of $X_{\e,b}$ in its completion. The most important property of this identification is summarized in Theorem \ref{identification theorem} below. 

A function $b \in \hat{\mathcal{B}}(X)$ can be used to define a natural class of metrics on $\p_{\omega_{b}}X$ known as \emph{visual metrics based at $\omega_{b}$} (see \cite[Chapter 3]{BS07} as well as Section \ref{subsec:visual}). These visual metrics have an associated parameter $\e > 0$ and a comparison constant $L$ to a specific model quasi-metric $\theta_{\e,b}$ on $\p_{\omega_{b}}X$ defined in \eqref{visual quasi}. We continue to write $d_{\e,b}$ for the canonical extension of the metric on the uniformization $X_{\e,b}$ to its completion $\bar{X}_{\e,b}$. 

%While one may not be able to find a visual metric $\theta$ on $\p_{\omega_{b}} X$ for every parameter $q$, visual metrics with parameter $q$ always exist when $q$ is sufficiently small, with the upper bound on $q$ depending only on $\delta$. We recall that a map $f:(X,d) \rightarrow (X',d')$ between metric spaces is \emph{$L$-biLipschitz} for a constant $L \geq 1$ if for all $x,y \in X$ we have
%\begin{equation}\label{bilip def}
%L^{-1}d(x,y) \leq d'(f(x),f(y)) \leq Ld(x,y). 
%\end{equation}

\begin{thm}\label{identification theorem}
Let $X$ be a complete geodesic $\delta$-hyperbolic space and let $b \in \hat{\mathcal{B}}(X)$ be such that $X$ is $K$-roughly starlike from the basepoint $\omega$ of $b$. Let $\e > 0$ be given such that $\rho_{\e,b}$ is a GH-density with constant $M$. Then there is a canonical identification $\varphi_{\e,b}: \p_{\omega}X \rightarrow \p X_{\e,b}$ under which the restriction of $d_{\e,b}$ to $\p X_{\e,b}$ defines a visual metric on $\p_{\omega}X$ based at $\omega$ with parameter $\e$ and comparison constant $L = L(\delta,K,\e,M)$.  
\end{thm}

For a precise description of the identification $\varphi_{\e,b}$ we refer to the disucssion after \eqref{construct identify}. 

\begin{rem}\label{flexibility}
It is useful to allow some additional flexibility in the choice of function $b$ in Theorems \ref{unbounded uniformization} and \ref{identification theorem}. This flexibility will be used in Theorem \ref{filling theorem}. For a continuous function $b: X \rightarrow \R$ and a constant $\kappa \geq 0$ we write $b \in \hat{\mathcal{B}}_{\kappa}(X)$ if there is some $b' \in \hat{\mathcal{B}}(X)$ such that $|b(x)-b'(x)| \leq \kappa$ for all $x \in X$; we write $b \in \mathcal{D}_{\kappa}(x)$ if $b' \in \mathcal{D}(X)$ and $b \in \mathcal{B}_{\kappa}(x)$ if $b' \in \mathcal{B}(X)$. We define the basepoint of $b$ to be the basepoint of $b'$, $\omega_{b} = \omega_{b'}$; while this definition may be ambiguous in the case $b \in \mathcal{D}_{\kappa}(X)$, this ambiguity does not matter in the context of our theorems. Then $X_{\e,b}$ is $e^{\e \kappa}$-biLipschitz to $X_{\e,b'}$ via the identity map on $X$ and $\rho_{\e,b'}$ will be a GH-density with constant $e^{2\e \kappa}M$ if $\rho_{\e,b}$ is a GH-density with constant $M$. It then easily follows that Theorems \ref{unbounded uniformization} and \ref{identification theorem} hold for $b \in \hat{\mathcal{B}}_{\kappa}(X)$ as well, with the uniformity parameter $A$ in Theorem \ref{unbounded uniformization} and the comparison constant $L$ in Theorem \ref{identification theorem} depending additionally on $\kappa$. 
\end{rem}

\begin{rem}\label{generalize to Banach}
Since we do not assume that the Gromov hyperbolic space $X$ in our theorems is proper, it is an interesting question whether our theorems can be applied to the ``free quasiworld" considered by V\"ais\"al\"a \cite{V99}. The Gromov hyperbolic spaces that arise in this context are domains in Banach spaces equipped with hyperbolic metrics. However in this setting the hypothesis that $X$ is geodesic is too strong \cite[Remark 3.5]{V99}. The best one can assume is that $X$ is a \emph{length space}, i.e., that the distance between two points of $X$ is equal to the infimum of the lengths of all curves joining them. Thus one would need to generalize Theorems \ref{unbounded uniformization} and \ref{identification theorem} to Gromov hyperbolic spaces that are not necessarily geodesic, but are still length spaces. We believe that such a generalization is possible, but since it is unnecessary for our applications we will not pursue it here. 
\end{rem}

Let's now discuss when the hypotheses of Theorems \ref{unbounded uniformization} and \ref{identification theorem} are satisfied in practice. The two key hypotheses are the rough starlikeness hypothesis from the basepoint $\omega_{b}$ of $b$ and the assumption that $\rho_{\e,b}$ is a GH-density on $X$. The rough starlikeness hypothesis is always easily verified in applications of interest, so as a consequence it is typically not a concern when trying to apply these theorems. Thus the main hypothesis to verify is that of $\rho_{\e,b}$ being a GH-density. The most general result available regarding verifying this condition is the following theorem of Bonk-Heinonen-Koskela. 

\begin{thm}\label{Gehring-Hayman}\cite[Theorem 5.1]{BHK}
Let $(X,d)$ be a geodesic $\delta$-hyperbolic space. There is $\e_{0} = \e_{0}(\delta) > 0$ depending only on $\delta$ such that if a density $\rho:X \rightarrow (0,\infty)$ satisfies for all $x,y \in X$ and some fixed $0 < \e \leq \e_{0}$,
\begin{equation}\label{proto Harnack}
e^{-\e d(x,y)} \leq \frac{\rho(x)}{\rho(y)} \leq e^{\e d(x,y)},
\end{equation}
then $\rho$ is a GH-density with constant $M = 20$. 
\end{thm} 

The inequality \eqref{proto Harnack} is satisfied for $\rho_{\e,b}$ for any $b \in \hat{\mathcal{B}}(X)$ since all functions in $\hat{\mathcal{B}}(X)$ are $1$-Lipschitz. Theorem \ref{Gehring-Hayman} builds on a number of previous works that are summarized at the beginning of \cite[Chapter 5]{BHK}. Thus if one is not concerned about obtaining Theorems \ref{unbounded uniformization} and \ref{identification theorem} for a specific value of $\e$ then it is always possible to assume that $\rho_{\e,b}$ is a GH-density with constant $M = 20$ by taking $\e$ sufficiently small.

In general one wants to establish that $\rho_{\e,b}$ is a GH-density for as large a value of $\e$ as possible, as this property is then inherited for smaller values of $\e$ by Proposition \ref{inheritance}. This is particularly important for applications in which there is a preferred visual metric on $\p_{\omega}X$, such as Theorems \ref{CAT theorem} and \ref{filling theorem} below.

CAT$(-1)$ spaces are geodesic metric spaces in which geodesic triangles are thinner than corresponding comparison geodesic triangles in the hyperbolic plane $\mathbb{H}^{2}$. We refer to \cite[Definition 3.2.1]{DSU17} for a precise definition; since the proper definition is somewhat lengthy to state and we will only be using easily stated consequences of the CAT$(-1)$ property, we omit a full description of the definition here. These spaces encompass many natural examples such as trees and simply connected Riemannian manifolds with sectional curvatures $\leq -1$. A CAT$(-1)$ space is $\delta$-hyperbolic with the same hyperbolicity constant $\delta = \delta(\mathbb{H}^{2})$ as the hyperbolic plane, for which the optimal constant can be computed explicitly to be $\delta = \log(1+\sqrt{2})$.

For a CAT$(-1)$ space $X$ and a function $b \in \hat{\mathcal{B}}(X)$ with basepoint $\omega$ the model quasi-metric $\theta_{1,b}$ on $\p_{\omega}X$ in fact defines a distinguished choice of visual metric on $\p_{\omega}X$ with parameter $\e = 1$. This metric is known as the \emph{Bourdon metric} on $\p_{\omega}X = \p X$ when $b \in \mathcal{D}(X)$ and the \emph{Hamenst\"adt metric} on $\p_{\omega}X$ when $b \in \mathcal{B}(X)$. For further details we refer to Remark \ref{CAT visual}. Our next theorem applies Theorems \ref{unbounded uniformization} and \ref{identification theorem} to the special case of CAT$(-1)$ spaces at the special value $\e = 1$. 

\begin{thm}\label{CAT theorem}
Let $X$ be a complete CAT$(-1)$ space and let $b \in \hat{\mathcal{B}}(X)$ be given with basepoint $\omega$. Then there is a universal constant $M \geq 1$ such that $\rho_{1,b}$ is a GH-density with constant $M$. If, furthermore, $X$ is $K$-roughly starlike from the basepoint $\omega$ of $b$ for some $K \geq 0$ then the conclusions of Theorems \ref{unbounded uniformization} and \ref{identification theorem} hold for $X_{1,b}$ with constants $A = A(K)$ and $L = L(K)$ depending only on $K$. In particular the restriction of $d_{1,b}$ to $\p X_{1,b}$ is $L$-biLipschitz to $\theta_{1,b}$.
\end{thm}

The constant $M$ in Theorem \ref{CAT theorem} is universal in the sense that it is the same for any CAT$(-1)$ space $X$ and any $b \in \hat{\mathcal{B}}(X)$. The dependence of $A$ on $K$ can be removed when $b \in \mathcal{D}(X)$ by mimicking the arguments of \cite[Proposition 4.5]{BHK}; this same comment applies to Theorem \ref{unbounded uniformization} as well. The conclusions of Theorem \ref{CAT theorem} show in particular that the boundary $\p X_{1,b}$ of the uniformization has a canonical biLipschitz identification with the Gromov boundary $\p_{\omega}X$ relative to $\omega$ equipped with the distinguished visual metric $\theta_{1,b}$. 

\begin{rem}\label{sharp} Theorem \ref{identification theorem} produces an obstruction for $\rho_{\e,b}$ to be a GH-density: the Gromov boundary $\p_{\omega}X$ must admit a visual metric based at the basepoint $\omega$ of $b$ with parameter $\e$. In the case $b = b_{z}$ for some $z \in X$ (i.e., if $b$ is a distance function) then this shows in particular that $\e \leq -K_{u}(X)$, where $K_{u}(X)$ is the asymptotic upper curvature bound defined by Bonk and Foertsch (see \cite[Theorem 1.5]{BF06}). In the case of the $n$-dimensional hyperbolic space $\mathbb{H}^{n}$ of constant negative curvature $-1$ ($n \geq 2$) we have $K_{u}(\mathbb{H}^{n}) = 1$ by \cite[Proposition 1.4]{BF06}. Hence $\rho_{\e,b}$ cannot be a GH-density for any $\e > 1$ when $b \in \mathcal{D}(\mathbb{H}^{n})$. This shows in particular that the value $\e = 1$ for $\rho_{1,b}$ to be a GH-density in Theorem \ref{CAT theorem} is sharp in certain cases. We can in fact extend these conclusions to observe that $\rho_{\e,b}$ cannot be a GH-density for any $\e > 1$ when $b \in \mathcal{B}(\mathbb{H}^{n})$ as well by observing that, for a fixed $\omega \in \p\mathbb{H}^{n}$, any visual metric based at $\omega$ on $\p_{\omega}\mathbb{H}^{n}$ with parameter $\e > 1$ would give rise to a visual metric on $\p \mathbb{H}^{n}$ with parameter $\e > 1$ by a M\"obius inversion of $\p_{\omega}\mathbb{H}^{n}$ centered at the point $\omega$. 
\end{rem}

We will also apply our uniformization results to hyperbolic fillings of an arbitrary metric space $(Z,d)$. We briefly describe the construction of the hyperbolic filling  here, with further details in Section \ref{sec:filling}, including proofs for the claims made here. Our construction will depend in part on two parameters $\alpha > 1$ and $\tau > 1$. For an $r > 0$ we say that a subset $S \subset Z$ is \emph{$r$-separated} if for each $x,y \in S$ we have $d(x,y) \geq r$. Given a parameter $\alpha > 1$, we choose for each $n \in \Z$ a maximal $\alpha^{-n}$-separated subset $S_{n}$ of $Z$. For $n \in \Z$ we write $V_{n} = \{(z,n): z \in S_{n}\}$ and set $V = \bigcup_{n \in \Z} V_{n}$. The set $V$ will serve as the vertex set for $X$. We define the \emph{height function} $h: V \rightarrow \Z$ on this vertex set by $h(v) = n$ for $v = (z,n) \in V_{n}$.

We associate to each vertex $v = (z,n)\in V$ the ball $B(v) = B(z,\tau \alpha^{-n})$ of radius $\tau \alpha^{-n}$ centered at $z$. We place an edge between vertices $v,w \in V$ if and only if their heights satisfy $|h(v)-h(w)| \leq 1$ and their associated balls satisfy $B(v) \cap B(w) \neq \emptyset$. We write $X$ for the resulting graph and call this a \emph{hyperbolic filling} of $Z$. If $\tau$ is sufficiently large (see inequality \eqref{tau requirement}) then $X$ will be a connected graph by Proposition \ref{connected filling}. We make $X$ into a geodesic metric space by declaring all edges to have unit length. We extend the height function $h$ to a $1$-Lipschitz function $h: X \rightarrow \R$ by linearly interpolating the values of $h$ from the vertices to the edges of $X$. 

As a metric space $X$ is $\delta$-hyperbolic with $\delta = \delta(\alpha,\tau)$ depending only on the parameters $\alpha$ and $\tau$. There is a distinguished point $\omega \in \p_{G}X$ in the Gromov boundary that can be thought of as an ideal point at infinity for $Z$. We have an identification $\p_{\omega}X \cong \bar{Z}$ of the Gromov boundary relative to $\omega$ with the completion $\bar{Z}$ of $Z$. Under this identification the extension of the metric $d$ to $\bar{Z}$ defines a visual metric on $\p_{\omega}X$ with parameter $\e = \log \alpha$. All of the results mentioned here are proved in Section \ref{sec:filling}.

We define a density $\rho$ on $X$ by $\rho(x) = \alpha^{-h(x)}$ for $x \in X$. We write $X_{\rho}$ for the conformal deformation of $X$ with conformal factor $\rho$. By Lemma \ref{height busemann} there is a Busemann function $b$ based at $\omega$ such that $|h(x)-b(x)| \leq 3$ for all $x \in X$ and therefore $h \in \mathcal{B}_{3}(X)$ in the notation of Remark \ref{flexibility}. For such a Busemann function $b \in \mathcal{B}(X)$ we have that the density $\rho$ is uniformly comparable to the density $\rho_{\e,b}$ with $\e = \log \alpha$.

\begin{thm}\label{filling theorem}
Let $Z$ be a metric space and let $X$ be a hyperbolic filling of $Z$ with parameters $\alpha > 1$ and $\tau > \min\{3,\alpha/(\alpha-1)\}$. Then $X$ is $\frac{1}{2}$-roughly starlike from $\omega$ and $\rho$ is a GH-density with constant $M = M(\alpha,\tau)$.

Thus the conclusions of Theorems \ref{unbounded uniformization} and \ref{identification theorem} hold for $X_{\rho}$. In particular we have a canonical $L$-biLipschitz identification of $\p X_{\rho}$ and $\bar{Z}$, with $L = L(\alpha,\tau)$. 
\end{thm}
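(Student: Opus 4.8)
The plan is to verify the two hypotheses of Theorems \ref{unbounded uniformization} and \ref{identification theorem} for the hyperbolic filling $X$ — rough starlikeness from $\omega$ and admissibility of $\rho_{\e}$ — and then simply invoke those theorems. I would organize the argument around three claims, each of which relies on structural facts about hyperbolic fillings that are collected in Section \ref{sec:filling}.

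First I would establish $\frac{1}{2}$-rough starlikeness from $\omega$. Given a vertex $v = (z,n) \in V_{n}$, I want a geodesic line $\sigma: \R \to X$ with one endpoint $\omega$ passing within distance $\frac{1}{2}$ of $v$. The key observation is that $\omega$ is the boundary point ``at infinite height'': any sequence of vertices $v_{n}$ with $h(v_{n}) \to -\infty$ along which consecutive vertices are adjacent converges to $\omega$. Starting from $v$, one walks downward in height — at each step, since $S_{n-1}$ is a maximal $a^{n-1}$-separated set and $\tau > (1-a)^{-1}$, the ball $B(v)$ meets some $B(w)$ with $h(w) = h(v)-1$, giving an edge; iterating produces a geodesic ray from $v$ to $\omega$. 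Walking upward in height one reaches any prescribed $\xi \in \bar Z = \p_{\omega}X$ by choosing at each level a vertex whose ball contains (a point near) $\xi$; concatenating gives a geodesic line through $v$ from $\omega$ to $\xi$. For a general point $x \in X$ lying on an edge, it is within distance $\frac{1}{2}$ of a vertex, so the vertex case suffices with the constant $\frac{1}{2}$. I would also need the second requirement of Definition \ref{def:rough star}: every $\xi \in \p_{\omega}X$ is the endpoint of a geodesic line from $\omega$ — this follows from the same upward/downward concatenation, using completeness of $Z$ to make sense of $\xi \in \bar Z$.

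Second, and this is the main obstacle, I would prove that $\rho_{\e}(x) = e^{-\e h(x)}$ is admissible for $X$ with constant $M = M(a,\tau,\e)$ for every $0 < \e \leq -\log a$. As the discussion preceding the theorem notes, the paper's general tool (\cite[Theorem 5.1]{BHK}, via the Harnack inequality) only yields admissibility for $\e \leq \e_{0}(\delta)$, which may be strictly smaller than $-\log a$; so for the full range up to $-\log a$ a direct argument is needed. The strategy is the Gehring–Hayman-type estimate: given $x,y \in X$ and a geodesic $[x,y]$, compare $\l_{\rho_{\e}}([x,y])$ to $d_{\e}(x,y)$. Since $h$ is $1$-Lipschitz and changes by $\pm 1$ along each edge, a geodesic segment realizes its length as an integral of $e^{-\e h}$ over unit steps, and one can estimate $\l_{\rho_{\e}}([x,y])$ in terms of $e^{-\e m}$ where $m = \min_{[x,y]} h$; the thin-triangle/tree-like structure of $X$ forces the minimum height on a geodesic to be essentially $\min(h(x),h(y),$ height at which $x$ and $y$ ``branch toward $\omega$''$)$, which is controlled by the Gromov product. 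Meanwhile any competitor curve $\g'$ from $x$ to $y$ must also descend to roughly that height — because paths in $X$ that stay high correspond to small jumps in $Z$, and if $d_{Z}$(shadows of $x,y$) is comparable to $a^{m}$ then no high path connects them — so $\l_{\rho_{\e}}(\g') \gtrsim e^{-\e m}$ as well. The constraint $\e \leq -\log a$, i.e. $e^{-\e} \geq a$, is exactly what is needed so that the geometric-series sum $\sum_{k \geq 0} e^{-\e(m+k)}$ controlling $\l_{\rho_{\e}}([x,y])$ near its top is dominated by its bottom term $e^{-\e m}$ up to a factor $(1-e^{-\e})^{-1} \leq (1-a)^{-1}$, keeping $M$ finite and depending only on $a,\tau,\e$. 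I would also record that $X$ is $\delta(a,\tau)$-hyperbolic (Section \ref{sec:filling}) so that Theorems \ref{unbounded uniformization} and \ref{identification theorem} apply.

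Finally, with both hypotheses in hand, I would conclude: Theorem \ref{unbounded uniformization} gives that $X_{\e}$ is $A$-uniform with $A = A(a,\tau,\e)$ (absorbing $\delta = \delta(a,\tau)$ and $K = \frac12$), and Theorem \ref{identification theorem} gives the canonical biLipschitz identification $\iota: (\p_{\omega}X, \alpha) \to (\p X_{\e}, d_{\e}^{q/\e})$ for any visual metric $\alpha$ with parameter $q \leq \e$. Taking $\e = -\log a = q$ and using the stated identification $\p_{\omega}X \cong \bar Z$ under which the extended metric on $\bar Z$ is exactly the visual metric of parameter $-\log a$, the exponent $q/\e$ equals $1$, so $\iota$ becomes an $L$-biLipschitz identification of $(\bar Z, d)$ with $(\p X_{\e}, d_{\e})$, with $L = L(a,\tau)$ since all auxiliary constants now depend only on $a$ and $\tau$. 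The only subtlety to flag is that Theorem \ref{filling theorem} is stated with $\rho_{\e}$ rather than the Busemann density $\rho_{\e}^{*}$; as explained in the paragraph preceding the theorem, $|h - b| \leq 3$ makes these comparable, so admissibility and all conclusions transfer with constants changed only by factors depending on $\e$.
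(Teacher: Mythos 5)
Your overall architecture matches the paper's: rough starlikeness is proved exactly as in Lemma \ref{starlike filling} (descending/anchored vertical geodesic lines from Lemma \ref{second height connection}, combined with the identification $\p_{\omega}X \cong \bar{Z}$ of Proposition \ref{identify boundary}), admissibility is attacked by a direct Gehring--Hayman comparison at the level of the filling (the paper's Lemmas \ref{large distance}, \ref{nonvertex} and \ref{weighted estimate}), and the final statement is obtained by feeding the visual metric of parameter $q = \e = -\log a$ from Proposition \ref{identify boundary} into Theorem \ref{identification theorem}, as in Proposition \ref{bilip boundary}.

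There is, however, a genuine gap in your justification of the key lower bound on competitor curves. You argue that a curve from $x$ to $y$ must descend to roughly the branch height $m$ because ``paths in $X$ that stay high correspond to small jumps in $Z$'' and hence ``no high path connects them.'' This is false: a path staying at height $N \gg m$ can perfectly well connect the two points, since each of its edges moves the shadow in $Z$ by at most $2\tau a^{N}$ but there is no bound on the number of edges, so the total displacement can be as large as needed. The correct statement is that such a high path is necessarily \emph{long in $X_{\e}$}, and this is exactly where the hypothesis $\e \leq -\log a$ enters: writing $\beta = -\e/\log a \leq 1$, each edge has $\l_{\e}$-length bounded below by a constant times the $\beta$-th power of the $Z$-displacement of its endpoints, and the subadditivity of $t \mapsto t^{\beta}$ (H\"older's inequality, as in Lemma \ref{large distance}) gives $\l_{\e}(\gamma') \gtrsim d(v,w)^{\beta} + a^{\beta \min\{h(v),h(w)\}} \asymp e^{-\e (v|w)_{h}}$ via the branch-point estimate of Lemma \ref{branch estimate}. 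Relatedly, you misplace the role of the constraint $\e \leq -\log a$: the telescoping/geometric series bounding the geodesic's $\l_{\e}$-length from above converges for \emph{every} $\e > 0$ (and your inequality $(1-e^{-\e})^{-1} \leq (1-a)^{-1}$ is reversed, though harmless since $M$ may depend on $\e$); the constraint is needed only for the competitor lower bound just described, which is precisely why admissibility is asserted only in the range $0 < \e \leq -\log a$. Once this step is repaired (and the small-scale case $|xy| \leq 2$ is handled as in Lemma \ref{nonvertex}), the remainder of your plan goes through as in the paper.
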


We compare our results to those of \cite{BBS21} in Remark \ref{bounded filling}. Another  notable predecessor to Theorem \ref{filling theorem} in the case that $Z$ is compact is the work of Piaggio \cite[Section 2]{P13}.

We provide here an outline of the contents of the rest of the paper. In Section \ref{sec:hyperbolic} we review several key notions in the setting of Gromov hyperbolic spaces. Section \ref{sec:tripod} establishes some basic properties of geodesic triangles in Gromov hyperbolic spaces with vertices on the Gromov boundary and gives a rough formula for evaluating certain distance functions and Busemann functions on their edges. We then use these results in Section \ref{sec:uniformize} to obtain estimates for the uniformized distance and prove Theorems \ref{unbounded uniformization}, \ref{identification theorem}, and \ref{CAT theorem}. In Section \ref{sec:filling} we construct the hyperbolic fillings of metric spaces that we use in Theorem \ref{filling theorem} and establish their basic properties. Lastly Theorem \ref{filling theorem} is proved in Section \ref{sec:uniform filling}. 

We are very grateful to Nageswari Shanmugalingam for providing multiple drafts of the work \cite{BBS21} on which a significant part of this paper is based. We also thank Tushar Das for making us aware of the results of \cite{DSU17} that are used to prove Theorem \ref{CAT theorem}.

\section{Hyperbolic metric spaces}\label{sec:hyperbolic}

\subsection{Definitions}\label{subsec:Def} Let $X$ be a set and let $f$, $g$ be real-valued functions defined on $X$. For $c \geq 0$ we will write $f \doteq_{c} g$ if
\[
|f(x)-g(x)| \leq c,
\] 
for all $x \in X$. If the exact value of the constant $c$ is not important or implied by context we will often just write $f \doteq g$. We will sometimes refer to the relation $f \doteq g$ as a \emph{rough equality} between $f$ and $g$. 

If $C \geq 1$ and $f$ and $g$ both take values in $(0,\infty)$, we will write $f \asymp_{C} g$ if  
\[
C^{-1}g(x) \leq f(x) \leq Cg(x).
\]
We will similarly write $f \asymp g$ if the value of $C$ is not important or implied by context. Note that if $f \doteq_{c} g$ then $e^{f} \asymp_{e^{c}} e^{g}$, and similarly if $f \asymp_{C} g$ then $\log f \doteq_{\log C} \log g$. We will stick to a convention of using lowercase $c$ for additive constants and uppercase $C$ for multiplicative constants. When this additive constant $c$ is determined by other parameters $\delta$, $K$, etc. under discussion we will write $c = c(\delta,K)$, while continuing to use the shorthand $c$ where it is not ambiguous (and the same for multiplicative constants $C$).

For a metric space $(X,d)$  we write $B(x,r) = \{y:d(x,y) < r\}$ for the open ball of radius $r > 0$ centered at $x$. A map $f:(X,d) \rightarrow (X',d')$ between metric spaces $X$ and $X'$ is \emph{isometric} if $d'(f(x),f(y)) = d(x,y)$ for $x$, $y\in X$. If furthermore $f$ is surjective then we say that it is an \emph{isometry} and that $X$ and $X'$ are \emph{isometric}. For a constant $c \geq 0$ a map $f:X \rightarrow X'$ is defined to be \emph{$c$-roughly isometric} if $d'(f(x),f(y)) \doteq_{c} d(x,y)$. The map $f$ is \emph{$L$-Lipschitz} for a constant $L \geq 0$ if $d'(f(x),f(y)) \leq Ld(x,y)$, and it is  \emph{$L$-biLipschitz} for a constant $L \geq 1$ if $d'(f(x),f(y)) \asymp_{L} d(x,y)$. As usual we will not mention the exact value of the constants if they are unimportant. 

When dealing with Gromov hyperbolic spaces $X$ in this paper we will use the generic distance notation $|xy|:=d(x,y)$ for the distance between $x$ and $y$ in $X$, except for cases where this could cause confusion. We will often use the generic notation $xy$ for a geodesic connecting two points $x,y \in X$, even when this geodesic is not unique; in these cases there will be no ambiguity regarding the geodesic that we are referring to. A \emph{geodesic triangle} $\Delta$ in $X$ is a collection of three points $x,y,z \in X$ together with geodesics $xy$, $xz$, and $yz$ joining these points, which we will refer to as the \emph{edges} of $\Delta$. We will also alternatively write $xyz = \Delta$ for a geodesic triangle with vertices $x$, $y$ and $z$. 

For $x,y,z \in X$ the \emph{Gromov product} of $x$ and $y$ based at $z$ is defined by
\begin{equation}\label{Gromov product}
(x|y)_{z} = \frac{1}{2}(|xz|+|yz|-|xy|). 
\end{equation}
We note the basepoint change inequality for $x,y,z,p \in X$,
\begin{equation}\label{basepoint change}
|(x|y)_{z}-(x|y)_{p}| \leq |zp|,
\end{equation}
which follows from the triangle inequality. 

By \cite[Chapitre 2, Proposition 21]{GdH90} we have two key consequences of $\delta$-hyperbolicity for a metric space $X$ regarding Gromov products. The first is that for every $x,y,z,p \in X$ we have
\begin{equation}\label{delta inequality}
(x|z)_{p} \geq \min \{(x|y)_{p},(y|z)_{p}\} - 4\delta. 
\end{equation}
We refer to \eqref{delta inequality} as the \emph{$4\delta$-inequality}.

The second is that for any geodesic triangle $xyz$ in $X$ we have that if $p \in xy$, $q \in xz$ are points with $|xp| = |xq| \leq (y|z)_{x}$ then $|pq| \leq 4\delta$. Here $xy$ and $xz$ are referring to the corresponding geodesics in the triangle $\Delta$. We will refer to this as the \emph{$4\delta$-tripod condition}.

Both inequality \eqref{delta inequality} and the tripod condition can be taken as equivalent definitions of hyperbolicity. By \cite[Chapitre 2, Proposition 21]{GdH90} all of these definitions are equivalent up to a factor of $4$. We note that the definition using inequality \eqref{delta inequality} does not use the fact that $X$ is geodesic, and is therefore used as a definition of $\delta$-hyperbolicity for general metric spaces. We will be citing several basic results from \cite{BS07} in which inequality \eqref{delta inequality} is used as the definition of $\delta$-hyperbolicity (with $\delta$ in place of $4\delta$). Wherever necessary we have multiplied the constants used in their results by $4$ in order to account for this discrepancy.

Let $X$ be a geodesic Gromov hyperbolic space and fix $p \in X$. A sequence $\{x_{n}\}$ \emph{converges to infinity} if we have $(x_{n}|x_{m})_{p} \rightarrow \infty$ as $m,n \rightarrow \infty$. The \emph{Gromov boundary} $\p_{G} X$ of a Gromov hyperbolic space $X$ is defined to be the set of all equivalence classes of sequences $\{x_{n}\} \subset X$ converging to infinity, with the equivalence relation $\{x_{n}\} \sim \{y_{n}\}$ if $(x_{n}|y_{n})_{p} \rightarrow \infty$ as $n \rightarrow \infty$. Inequality \eqref{basepoint change} shows that these notions do not depend on the choice of basepoint $p$. 

A second boundary that we can associate to $X$ is the \emph{geodesic boundary} $\p^{g}X$, which is defined as equivalence classes of geodesic rays $\gamma: [0,\infty) \rightarrow X$, with two geodesic rays $\gamma$ and $\sigma$ being equivalent if there is a constant $c \geq 0$ such that $|\gamma(t)\sigma(t)| \leq c$ for $t \geq 0$. There is a natural inclusion $\p^{g}X \subseteq \p_{G}X$ given by sending a geodesic ray $\gamma$ to the sequence $\{\gamma(n)\}_{n \in \N}$. This inclusion need not be surjective in general. However, it is always surjective if $X$ is \emph{proper}, meaning that closed balls in $X$ are compact. 

For a point $\omega \in \p_{G}X$ and a sequence $\{x_{n}\}$ converging to infinity we will write $\{x_{n}\} \in \omega$ or $x_{n} \rightarrow \omega$ if $\{x_{n}\}$ belongs to the equivalence class of $\omega$. For a geodesic ray $\gamma:[a,\infty) \rightarrow X$, $a \in \R$, and a point $\omega \in \p_{G}X$ we will write $\gamma \in \omega$ if $\{\gamma(n)\}_{n \geq a} \in \omega$, $n \in \N$. We will sometimes also consider geodesic rays $\gamma: (-\infty,a] \rightarrow X$ with a reversely oriented parametrization, for which we write $\gamma \in \omega$ if $\{\gamma(-n)\}_{n \geq -a} \in \omega$.

For the rest of this paper we will be using the standard notation $\p X:=\p_{G} X$ for the Gromov boundary of a Gromov hyperbolic space $X$. While this notation does conflict with the notation $\p \Omega = \bar{\Omega} \backslash \Omega$ introduced prior to Definition \ref{def:uniform}, the meaning of the notation will always be clear from context since we will never use it in the sense of Definition \ref{def:uniform} in the context of Gromov hyperbolic spaces.

We now extend some notions regarding geodesic triangles to the Gromov boundary.   For a point $x \in X$ and a point $\xi \in \p X$ we will often write $x\xi$ for a geodesic ray $\gamma: [0,\infty) \rightarrow X$ with $\gamma(0)= x$ and $\gamma \in \xi$, provided such a geodesic ray exists. Similarly, for $\zeta,\xi \in \p X$ we will write $\zeta\xi$ for a geodesic line $\gamma: \R \rightarrow X$ with $\gamma|_{(-\infty,0]} \in \zeta$ and $\gamma|_{[0,\infty)} \in \xi$, provided such a geodesic line exists. Such geodesic lines and rays always exist when $X$ is proper, but not necessarily in general. We extend the definition of geodesic triangles $\Delta$ in $X$ to allow for vertices in $\p X$: a geodesic triangle $xyz = \Delta$ in $X$ is a collection of three points $x,y,z \in X \cup \p X$ together with geodesics $xy$, $xz$, $yz$ connecting them in the sense described above. 

\begin{rem}\label{distinct}
It is easy to see from the definitions that there is no geodesic $\gamma: \R \rightarrow X$ such that $\gamma|_{[0,\infty)}$ and $\gamma|_{(-\infty,0]}$ belong to the same equivalence class in the Gromov boundary $\p X$. Hence, for a geodesic triangle $\Delta$, all vertices of $\Delta$ on $\p X$ must be distinct. 
\end{rem}

Gromov products based at points $p \in X$ can be extended to points of $\p X$ by defining the Gromov product of equivalence classes $\xi$, $\zeta \in \p X$ based at $p$ to be
\[
(\xi |\zeta)_{p} = \inf \liminf_{n \rightarrow \infty}(x_{n}|y_{n})_{p},
\] 
with the infimum taken over all sequences $\{x_{n}\} \in \xi$, $\{y_{n}\} \in \zeta$. By \cite[Lemma 2.2.2]{BS07},  if $X$ is $\delta$-hyperbolic then for any choices of sequences  $\{x_{n}\} \in \xi$, $\{y_{n}\} \in \zeta$ we have
\begin{equation}\label{sequence approximation}
(\xi |\zeta)_{p}  \leq \liminf_{n \rightarrow \infty}(x_{n}|y_{n})_{p} \leq \limsup_{n \rightarrow \infty}(x_{n}|y_{n})_{p} \leq (\xi |\zeta)_{p} + 8\delta.
\end{equation}
We also have the $4\delta$-inequality for $\xi$, $\zeta$, $\omega \in \p X$ and $p \in X$,
\begin{equation}\label{boundary delta inequality}
(\xi |\omega)_{p} \geq \min \{(\xi | \zeta)_{p},(\zeta | \omega)_{p}\} - 4\delta. 
\end{equation} 
For $x \in X$, $\xi \in \p X$ the Gromov product is defined analogously as 
\[
(x |\xi)_{p} = \inf \liminf_{n \rightarrow \infty}(x|x_{n})_{p},
\] 
with the infimum taken over $\{x_{n}\} \in \xi$, and the analogues of \eqref{sequence approximation} and \eqref{boundary delta inequality} hold as well.

We next observe that geodesic triangles $\Delta$ with vertices  in $X \cup \p X$ are $10\delta$-thin, in the precise sense that if $u \in \Delta$ is any given point then there is a point $v \in \Delta$ satisfying $|uv| \leq 10\delta$ that does not belong to the same edge of $\Delta$ as $u$. When $X$ is proper this can be easily deduced from the $\delta$-thin triangles property for triangles in $X$ by a limiting argument. Without the properness hypothesis this result can also be obtained with a larger thinness constant $200\delta$ as a consequence of work of V\"ais\"al\"a \cite[Theorem 6.24]{V05}; we note that he uses \eqref{delta inequality} as the definition of $\delta$-hyperbolicity so we have to multiply the constant he obtains by $4$. As V\"ais\"al\"a works in the more general context of Gromov hyperbolic spaces that are not necessarily geodesic (which greatly complicates the proofs), we prefer to give a simpler direct proof of $10\delta$-thinness here. 

\begin{lem}\label{infinite thin}
Let $\Delta$ be a geodesic triangle in $X$ with vertices in $X \cup \p X$. Then $\Delta$ is $10\delta$-thin. 
\end{lem}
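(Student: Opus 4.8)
The plan is to reduce the statement, case by case on the number of boundary vertices of $\Delta$, to the ordinary $\delta$-thin triangles property for triangles in $X$, using the fact that $\delta$-thinness is a statement about finite-length initial segments of geodesics and that such segments can be captured by truncating the rays and lines at large parameter and gluing in short connecting geodesics. The key quantitative input will be the $4\delta$-tripod condition together with the basepoint-change inequality \eqref{basepoint change} and the sequence-approximation estimate \eqref{sequence approximation}, which control how Gromov products on the truncated triangle approximate those on the ideal triangle; the accumulation of error terms (roughly $4\delta$ from tripods, plus $8\delta$ from \eqref{sequence approximation}) is what forces the constant up from $\delta$ to $10\delta$.

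First I would dispose of the case of zero boundary vertices, which is just the hypothesis that $X$ is $\delta$-hyperbolic, so $\Delta$ is $\delta$-thin and in particular $10\delta$-thin. Next, for one boundary vertex, say $\Delta = xy\xi$ with $x,y \in X$ and $\xi \in \p X$: let $\alpha = x\xi$ and $\beta = y\xi$ be the two rays and $xy$ the finite edge. Given a point $u$ on, say, $\alpha$, at parameter $s$, I would pick $t \gg s$ and consider the ordinary geodesic triangle with vertices $x$, $y$, $\alpha(t)$, whose sides are $xy$, the initial segment $\alpha|_{[0,t]}$, and some geodesic $[y,\alpha(t)]$. By $\delta$-thinness of this finite triangle, $u$ is within $\delta$ of a point $v$ on $xy$ or on $[y,\alpha(t)]$. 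If $v \in xy$ we are done. If $v \in [y,\alpha(t)]$, I need to transfer it to a point on $\beta = y\xi$: since $[y,\alpha(t)]$ and $\beta|_{[0,t']}$ are two geodesics from $y$ whose far endpoints $\alpha(t)$ and $\beta(t')$ both lie near $\xi$, the Gromov product $(\alpha(t)\,|\,\beta(t'))_{y}$ is large (bounded below using \eqref{sequence approximation} and that $u$ is at bounded distance from $y$ only when $s$ is small — more carefully, I'd arrange $t,t'$ so that the tripod condition applies up to the parameter of $v$), so by the $4\delta$-tripod condition $v$ is within $4\delta$ of the corresponding point on $\beta$. This yields a point on $\beta$ within $\delta + 4\delta = 5\delta \le 10\delta$ of $u$. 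The cases of two and three boundary vertices are handled the same way: truncate every ray/line at a large parameter, form the resulting finite geodesic triangle (inserting short geodesics between the truncation points and, when needed, anchoring to a fixed basepoint to invoke \eqref{basepoint change}), apply finite $\delta$-thinness, and then use the tripod condition once or twice more to push the witness point $v$ onto the correct ideal edge, absorbing an extra $4\delta$ each time; three applications of the tripod correction on top of the initial $\delta$ still stays within $10\delta$.

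The main obstacle, and the step I would be most careful about, is the bookkeeping in the two- and three-boundary-vertex cases: one must choose the truncation parameters uniformly large enough that (a) the finite triangle's edges genuinely $\delta$-fellow-travel the ideal edges past the parameter where $u$ and its witness $v$ live, and (b) the Gromov products between truncation points on rays heading to the same boundary point are large enough to license each tripod correction, all while the point $u$ is allowed to sit arbitrarily far out along an ideal edge. The clean way to manage (a) is to note that truncation of a geodesic ray/line at parameter $t$ and reconnecting introduces no error at all on the portion of the edge with parameter $\le t - 10\delta$, so it suffices to take $t$ larger than the parameter of $u$ plus a fixed multiple of $\delta$; for (b) one uses that two geodesic rays in the same equivalence class stay within a bounded distance and feeds this into the definition of the Gromov product. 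Once these uniform choices are fixed, every remaining inequality is a direct application of $\delta$-thinness of a finite triangle and the $4\delta$-tripod condition, and the constant $10\delta$ comes out with room to spare.
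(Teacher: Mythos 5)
Your treatment of the cases with at most one vertex on $\p X$ is essentially the paper's argument: truncate the ray(s), apply $\delta$-thinness of the finite triangle $x\,y\,\alpha(t)$, and if the witness $v$ lands on the auxiliary side $[y,\alpha(t)]$, use that $(\alpha(t)|\beta(t'))_{y}$ is eventually larger than $|yv|$ to invoke the $4\delta$-tripod condition at $y$ and move $v$ onto $\beta=y\xi$, giving $5\delta$. That part is correct.

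The genuine gap is in the case of two or three ideal vertices. First, the tripod condition only compares points on two geodesics issuing from a \emph{common} vertex, so it cannot by itself ``push the witness point onto the correct ideal edge'': after the first reduction the witness $v$ sits on an auxiliary geodesic such as $x_{n}z$ (from a truncation point $x_{n}\in xy$ to the ideal vertex $z$), and the target edge $xz$ shares no finite endpoint with it. Transferring $v$ from $x_{n}z$ to $xz$ is exactly the nontrivial step; the paper does it by choosing $x_{n}'\in xz$ on a ray $wx$ with $|wx_{n}'|=|ux_{n}|$, using that asymptotic rays from the points $u,w$ stay a bounded distance $c$ apart to get $|x_{n}x_{n}'|\leq c$, applying the one-ideal-vertex case a second time to the triangle $x_{n}x_{n}'z$, and then \emph{excluding} the alternative that $v$ is close to the side $x_{n}x_{n}'$ for all $n$ via $|x_{n}u|\rightarrow\infty$. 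Your sketch has no analogue of this exclusion argument, and your claim that the connecting geodesics between truncation points are ``short'' is unjustified: two geodesics heading to the same boundary point with different finite endpoints are only boundedly close after an argument of essentially this same kind (in the paper, Lemma \ref{simple} and Lemma \ref{infinite triangle} play this role), and with possibly long connecting segments your truncated figure is a polygon, not a geodesic triangle, so the finite $\delta$-thinness step does not apply as stated. Second, your own error accounting does not close: the initial $\delta$ plus three tripod corrections is $\delta+3\cdot 4\delta=13\delta$, which exceeds the claimed $10\delta$; in the paper the correct bookkeeping is two successive applications of the $5\delta$ one-ideal-vertex case, and the second of these is not a single $4\delta$ tripod move. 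A cleaner fix, following the paper, is to truncate only along the edge containing $u$ (this costs nothing since $x_{n}y_{n}\subset xy$), reduce to the one-ideal-vertex case, and then perform the bounded-distance transfer with the exclusion argument described above.
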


\begin{proof}
Let $x,y,z \in X \cup \p X$ be the vertices of $\Delta$. Let $u \in \Delta$ be given. Since $X$ has $\delta$-thin triangles, we may assume that $\Delta$ has at least one vertex on $\p X$. We first consider the case in which $\Delta$ has exactly one vertex on $\p X$, which by relabeling we can assume is $z$.  We first assume that $u \in xy$. Let $\{z_{n}\} \subset xz$ and $\{z_{n}'\} \subset yz$ be sequences such that $z_{n} \rightarrow z$ and $z_{n}' \rightarrow z$. For each $n$ we let $\Delta_{n} = xyz_{n}$ be the geodesic triangle sharing the edge $xy$ with $\Delta$, having a second edge be the subsegment $xz_{n}$ of $xz$, and having a third edge be any choice of geodesic $yz_{n}$. Then $\Delta_{n}$ is $\delta$-thin, so we have for each $n$ that either $\mathrm{dist}(u,xz_{n}) \leq \delta$ or $\mathrm{dist}(u,yz_{n}) \leq \delta$ (or both). In the first case we are done since $xz_{n} \subset xz$, so we can assume that $\mathrm{dist}(u,yz_{n}) \leq \delta$. Let $v_{n} \in yz_{n}$ be such that $|uv_{n}| \leq \delta$. Then $|v_{n}y| \leq \delta + |uy|$. 

Since both $z_{n}$ and $z_{n}'$ converge to $z$, for sufficiently large $n$ we will have $(z_{n}|z_{n}')_{y} \geq \delta + |uy|$, which implies in particular that $|z_{n}'y| \geq |v_{n}y|$. The $4\delta$-tripod condition applied to $y$, $z_{n}$, and $z_{n}'$ then implies that if $w_{n} \in yz_{n}'$ is the unique point such that $|yw_{n}| = |yv_{n}|$ then $|v_{n}w_{n}| \leq 4\delta$, from which it follows that $|uw_{n}| \leq 5\delta$ for all sufficiently large $n$. Since $w_{n} \in yz$ this completes the proof of this case. 

The other cases are $u \in xz$ and $u \in yz$. By symmetry it suffices to prove the case $u \in xz$. We define the sequences $\{z_{n}\}$ and $\{z_{n}'\}$ and the triangle $\Delta_{n}$ as before. As in the case $u \in xy$ we can assume that $\mathrm{dist}(u,yz_{n}) \leq \delta$ for all $n$, as otherwise by the $\delta$-thin triangles property we have $\mathrm{dist}(u,xy) \leq \delta$ and we are done. We let $v_{n} \in yz_{n}$ be such that $|uv_{n}| \leq \delta$, note that $|v_{n}y| \leq \delta + |uy|$ as before, and choose $n$ large enough that $(z_{n}|z_{n}')_{y} \geq \delta + |uy|$. As before the $4\delta$-tripod condition then supplies a point $w_{n} \in yz_{n}'$ such that $|w_{n}v_{n}| \leq 4\delta$ and we conclude that $\mathrm{dist}(u,yz) \leq 5\delta$. 

We can now handle the case in which potentially two or three vertices of $\Delta$ belong to $\p X$. By symmetry it suffices to show for a point $u \in xy$ that $u$ is $10\delta$-close to either $xz$ or $yz$. Let $\{x_{n}\} \subset xy$ and $\{y_{n}\} \subset xy$ be sequences such that $x_{n} \rightarrow x$ and $y_{n} \rightarrow y$; if $x \in X$ then we set $x_{n} = x$ for all $n$ and similarly if $y \in Y$ then we set $y_{n} = y$ for all $n$.  Let $\Delta_{n} = x_{n}y_{n}z$ be a geodesic triangle with one edge the subsegment $x_{n}y_{n}$ of $xy$. Then $\Delta_{n}$ has at most one vertex $z$ on $\p X$. We conclude from the previous case that $u$ is $5\delta$-close to either $x_{n}z$ or $y_{n}z$. By switching the roles of $x$ and $y$ if necessary, we can then assume that there is $v \in x_{n}z$ such that $|uv| \leq 5\delta$. If $x \in X$ then $x_{n} = x$ and we are done. Thus we can assume that $x \in \p X$.

Fix any point $w \in xz$ and let $x_{n}' \in wx$ be defined such that $|wx_{n}'| = |ux_{n}|$. Since the geodesic rays $wx$ and $ux$ define the same point $x$ of the Gromov boundary, there is a constant $c \geq 0$ such that $|x_{n}x_{n}'| \leq c$ for all $n$. We apply the previous case again to a triangle $\Delta_{n}' = x_{n}x_{n}'z$ with edges the segment $x_{n}'z$, the segment $x_{n}z$, and a choice of geodesic $x_{n}x_{n}'$, obtaining that $v$ is $5\delta$-close to either $x_{n}'z$ or $x_{n}x_{n}'$. If $v$ is $5\delta$-close to $x_{n}x_{n}'$ for all $n$ then 
\[
|x_{n}u| \leq |uv| + \mathrm{dist}(v,x_{n}x_{n}') + |x_{n}x_{n}'| \leq 10\delta + c, 
\]
contradicting that $|x_{n}u| \rightarrow \infty$ as $n \rightarrow \infty$. We conclude that $v$ is $5\delta$-close to $x_{n}'z \subset xz$ for all sufficiently large $n$, which implies that $\mathrm{dist}(u,xz) \leq 10\delta$ as desired.  
\end{proof}

We can also now formally define rough starlikeness from points of $X \cup \p X$. We recall that for $\omega \in \p X$ we write $\p_{\omega} X = \p X \backslash \{\omega\}$ for the Gromov boundary of $X$ relative to $\omega$. The definition is slightly different for points of $X$ and points of $\p X$, so we handle these two cases separately. 

\begin{defn}\label{def:rough star}
Let $X$ be a geodesic Gromov hyperbolic space. Let $z \in X$ and $K \geq 0$ be given. We say that $X$ is \emph{$K$-roughly starlike} from $z$ if 
\begin{enumerate}
\item For each $x \in X$ there is a geodesic ray $\gamma: [0,\infty) \rightarrow X$ such that $\gamma(0) = z$ and $\dist(x,\gamma) \leq K$.
\item For each $\xi \in \p X$ there is a geodesic ray $\gamma: [0,\infty) \rightarrow X$ such that $\gamma(0) = z$ and $\gamma \in \xi$.
\end{enumerate}

For a point $\omega \in \p X$ we say that $X$ is \emph{$K$-roughly starlike} from $\omega$ if 
\begin{enumerate}
\item For each $x \in X$ there is a geodesic line $\gamma: \R \rightarrow X$ such that $\dist(x,\gamma) \leq K$ and  $\gamma|_{(-\infty,0]}\in \omega$. 
\item For each $\xi \in \p_{\omega} X$ there is a geodesic line $\gamma: \R \rightarrow X$ such that $\gamma|_{[0,\infty)} \in \xi$ and $\gamma|_{(-\infty,0]} \in \omega$. 
\end{enumerate}
\end{defn} 

Part (2) of Definition \ref{def:rough star} implies in both cases that $\p^{g}X = \p X$, i.e., the geodesic boundary and the Gromov boundary coincide. It will be used as a replacement for the properness hypothesis in the main theorem of \cite{BHK}. We note that Property (2) of Definition \ref{def:rough star} automatically holds for any $\omega \in X \cup \p X$ when $X$ is proper, since in this case any two points of $X \cup \p X$ can be joined by a geodesic. We also remark that if $\p X$ consists of a single point $\omega$ then $X$ cannot be roughly starlike from $\omega$, since no geodesic line $\gamma: \R \rightarrow X$ can exist in this case. Similarly if $\p X$ is empty then $X$ cannot be roughly starlike from any of its points. 

\subsection{Busemann functions} 
In this section we closely follow \cite[Chapter 3]{BS07}. Throughout much of the paper we will need to work with Gromov products based at a point $\omega \in \p X$. These will be defined through the use of Busemann functions. In order to use the results from \cite[Chapter 3]{BS07} we have to show, for a geodesic ray $\gamma \in \omega$, that $b_{\gamma}$ is a Busemann function based at $\omega$ in their sense. The definition of a Busemann function given there starts with the function 
\begin{equation}\label{bs busemann}
b_{\omega,p}(x) = (\omega|p)_{x}-(\omega|x)_{p},
\end{equation}
for $x,p \in X$ and $\omega \in \p X$ and defines a Busemann function based at $\omega$ to be any function $b: X \rightarrow \R$ satisfying $b \doteq_{8\delta} b_{\omega,p}+s$ for some $p \in X$ and $s \in \R$ (recall that we are multiplying all of their constants by $4$ due to differing definitions of hyperbolicity). Note that this alternative definition \eqref{bs busemann} makes sense even for points in the Gromov boundary that do not belong to the geodesic boundary. 

\begin{lem}\label{equivalence busemann}
Let $\omega \in \p X$, let $p \in X$, and let $\gamma \in \omega$ be a geodesic ray with $\gamma(0) = p$. Then we have $b_{\omega,p} \doteq_{24\delta} b_{\gamma}$.
\end{lem}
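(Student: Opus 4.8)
\emph{Proof proposal.} The plan is to carry out every computation along the single sequence $\{\gamma(n)\}_{n \in \N}$, which represents $\omega$ since $\gamma \in \omega$, and then transfer the resulting estimates to the boundary point $\omega$ using the approximation inequality \eqref{sequence approximation}.

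First I would fix $x \in X$ and write out the two Gromov products occurring along this sequence. Since $\gamma$ is a geodesic ray with $\gamma(0) = p$ we have $|\gamma(n)p| = n$, so \eqref{Gromov product} gives
\[
(\gamma(n)|p)_{x} = \frac{1}{2}\big(|\gamma(n)x| + |px| - n\big), \qquad (\gamma(n)|x)_{p} = \frac{1}{2}\big(n + |xp| - |\gamma(n)x|\big),
\]
and subtraction yields $(\gamma(n)|p)_{x} - (\gamma(n)|x)_{p} = |\gamma(n)x| - n$. The right-hand side is exactly the quantity whose limit defines $b_{\gamma}(x)$ in \eqref{first busemann definition}; moreover, as observed right after \eqref{first busemann definition}, $n \mapsto |\gamma(n)x| - n$ is nondecreasing and bounded, so each of $(\gamma(n)|p)_{x}$ and $(\gamma(n)|x)_{p}$ is individually monotone in $n$ and hence converges, with
\[
\lim_{n \to \infty} (\gamma(n)|p)_{x} - \lim_{n \to \infty} (\gamma(n)|x)_{p} = b_{\gamma}(x).
\]

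Next I would pass from $\gamma(n)$ to $\omega$ inside each Gromov product. Applying the analogue of \eqref{sequence approximation} for a point of $X$ together with a point of $\p X$ to the sequence $\{\gamma(n)\} \in \omega$, and using that the two limits above exist (so $\liminf = \limsup$ for both), gives
\[
0 \le \lim_{n \to \infty} (\gamma(n)|p)_{x} - (\omega|p)_{x} \le 8\delta, \qquad 0 \le \lim_{n \to \infty} (\gamma(n)|x)_{p} - (\omega|x)_{p} \le 8\delta.
\]
Subtracting these, and inserting the identity for $b_{\gamma}(x)$ from the previous paragraph together with $b_{\omega,p}(x) = (\omega|p)_{x} - (\omega|x)_{p}$, gives $b_{\gamma}(x) - b_{\omega,p}(x) \in [-8\delta, 8\delta]$. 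Since $x \in X$ was arbitrary this proves $b_{\omega,p} \doteq_{8\delta} b_{\gamma}$, which is in particular the asserted $b_{\omega,p} \doteq_{24\delta} b_{\gamma}$.

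The argument is essentially a computation, so I do not anticipate a substantive obstacle; the only place demanding care is the exact constant in the mixed point--boundary form of \eqref{sequence approximation}. This is also where the slack up to $24\delta$ is available: if one prefers to avoid that version of the inequality, one can instead compare $b_{\omega,p}$ to a fixed tail value $(\gamma(N)|p)_{x} - (\gamma(N)|x)_{p}$ and control the difference using the $4\delta$-inequality \eqref{delta inequality} together with $(\gamma(n)|\gamma(m))_{x}, (\gamma(n)|\gamma(m))_{p} \to \infty$, at the cost of a few extra multiples of $\delta$.
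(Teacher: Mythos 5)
Your argument is correct, and it reaches a stronger conclusion ($b_{\omega,p} \doteq_{8\delta} b_{\gamma}$) than the stated $24\delta$. The route differs from the paper's mainly in being self-contained: the paper quotes \cite[Example 3.1.4]{BS07} for the rough identity relating $b_{\omega,p}(x)$ to $|xp|-2(\omega|x)_{p}$ and then applies \eqref{sequence approximation} once to the single product $(\cdot|x)_{p}$, paying $8\delta + 16\delta = 24\delta$; you instead expand both terms in $b_{\omega,p}(x) = (\omega|p)_{x}-(\omega|x)_{p}$ along the ray, use the exact cancellation $(\gamma(n)|p)_{x}-(\gamma(n)|x)_{p} = |\gamma(n)x|-n$ (equivalently the identity $(y|p)_{x}+(y|x)_{p}=|xp|$, which is the content of the cited example), and apply the mixed point--boundary form of \eqref{sequence approximation} to each product, so the two $8\delta$ errors enter with opposite signs and you only lose $8\delta$ total. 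This is legitimate since the paper explicitly asserts that the analogues of \eqref{sequence approximation} hold for products of a point with a boundary point, and the lower bounds $\lim(\gamma(n)|p)_{x} \geq (\omega|p)_{x}$, $\lim(\gamma(n)|x)_{p} \geq (\omega|x)_{p}$ are immediate from the infimum in the definition. One small quibble: the quantity $|\gamma(n)x|-n$ is in fact non-increasing in $n$ (by the triangle inequality $|\gamma(t)x| \leq (t-s)+|\gamma(s)x|$), not nondecreasing as you state following the paper's remark after \eqref{first busemann definition}; this is immaterial for your argument, since $(\gamma(n)|p)_{x}$ is then non-increasing and bounded below by $0$ while $(\gamma(n)|x)_{p}$ is non-decreasing and bounded above by $|xp|$, so both limits exist, which is all you use.
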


\begin{proof}
By \cite[Example 3.1.4]{BS07} we have for all $x \in X$ that
\[
b_{\omega,p}(x) \doteq_{8\delta} |xp|-(\omega|x)_{p}. 
\]
By inequality \eqref{sequence approximation} we have $(\gamma(n)|x)_{p} \doteq_{8\delta} (\omega|x)_{p}$ for $n \in \N$ sufficiently large. Since $p = \gamma(0)$ we have
\[
(\gamma(n)|x)_{p} = \frac{1}{2}(n + |xp|-|\gamma(n)x|). 
\]
Then
\begin{equation}\label{final line equivalence}
|xp|-2(\omega|x)_{p} \doteq_{16\delta} |\gamma(n)x|-n.
\end{equation}
Since the right side converges to $b_{\gamma}(x)$ as $n \rightarrow \infty$, the result follows.  
\end{proof}

We recall our definition of a Busemann function from \eqref{extension busemann definition}: a Busemann function $b \in \mathcal{B}(X)$ is any function $b: X \rightarrow \R$ such that $b = b_{\gamma}+s$ for some geodesic ray $\gamma$ in $X$ and some $s \in \R$. For such a Busemann function $b$ we let $\omega = \omega_{b} = [\gamma]$ denote its basepoint in $\p X$. Then by Lemma \ref{equivalence busemann} $b$ is a Busemann function based at $\omega$ in the sense of \cite[Chapter 3]{BS07} as well, provided that we use a cutoff of $b \doteq_{24\delta} b_{\omega,p} + s$ instead of the $8\delta$-cutoff used there. This only has the effect of further multiplying constants by 3 in the claims of that chapter. An easy consequence of Lemma \ref{equivalence busemann} is the following. 

\begin{lem}\label{identify busemann}
Let $\omega \in \p X$ and let $\gamma,\sigma:[0,\infty) \rightarrow X$ be geodesic rays with $\gamma,\sigma \in \omega$. Then there is a constant $s \in \R$ such that $b_{\sigma} \doteq_{72\delta} b_{\gamma} + s$. The constant $s$ depends only on the starting points $\gamma(0)$ and $\sigma(0)$ of the rays and satisfies $s = 0$ if $\gamma(0) = \sigma(0)$. 

Consequently if $b$ is any Busemann function based at $\omega$ and $\sigma \in \omega$ is any geodesic ray then there is a constant $s \in \R$ such that $b \doteq_{72\delta} b_{\sigma}+s$. 
\end{lem}

\begin{proof}
By \cite[Lemma 3.1.2]{BS07}, for each $p,q, x \in X$ we have
\[
b_{\omega,p}(x) \doteq_{24\delta} b_{\omega,q}(x)+b_{\omega,q}(p). 
\]
Setting $p = \gamma(0)$, $q = \sigma(0)$, and applying Lemma \ref{equivalence busemann} gives
\[
b_{\gamma}(x) \doteq_{72\delta} b_{\sigma}(x) + b_{\omega,\sigma(0)}(\gamma(0)). 
\]
This gives the first claim of the lemma with $c = b_{\omega,\sigma(0)}(\gamma(0))$. The claim that $s = 0$ if $\gamma(0) = \sigma(0)$ follows from the fact that $b_{\omega,p}(p) = 0$ for any $p \in X$. The final claim follows immediately since for any Busemann function $b$ based at $\omega$ there is some geodesic ray $\gamma \in \omega$ such that $b = b_{\gamma}+s'$ for some $s' \in \R$. 
\end{proof}

We will usually use the following lemma to perform computations with Busemann functions in practice.  Note that the geodesics are parametrized as starting from the basepoint $\omega \in \p X$ instead of ending there. The notation $(-\infty,a]$ below should be interpreted as $(-\infty,a] = \R$ when $a = \infty$. 

\begin{lem}\label{geodesic busemann}
Let $b$ be a Busemann function on $X$ based at $\omega \in \p X$. Let $a \in \R \cup \{\infty\}$ and let $\gamma:(-\infty,a] \rightarrow X$ be a geodesic with $\gamma(t) \rightarrow \omega$ as $t \rightarrow -\infty$. 
\begin{enumerate}
\item For any $s, t \in (-\infty,a]$ (or any $s,t \in \R$ in the case $a = \infty$) we have
\begin{equation}\label{geodesic busemann equation}
b(\gamma(t)) - b(\gamma(s)) \doteq_{144\delta} t - s.
\end{equation}
\item For any constant $u \in \R$ there is an arclength reparametrization $\t{\gamma}: (-\infty,\t{a}] \rightarrow X$ of $\gamma$ such that $b(\t{\gamma}(t)) \doteq_{144\delta} t+u$ for $t \in (-\infty,\t{a}]$.
\end{enumerate} 
\end{lem}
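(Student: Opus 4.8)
The plan is to reduce both parts to a single explicit computation involving the Busemann function of the reversed geodesic ray. Suppose $\gamma$ has domain $(-\infty,a]$ with $\gamma(t)\to\omega$ as $t\to-\infty$; the argument will be uniform in an auxiliary parameter, so the case $a=\infty$ will need no separate treatment. The key observation is that for \emph{any} fixed $s_{0}$ in the domain of $\gamma$, the reversed curve $\sigma(r):=\gamma(s_{0}-r)$, $r\in[0,\infty)$, is a genuine geodesic ray with $\sigma\in\omega$ — this last point being exactly the meaning of $\gamma(t)\to\omega$ as $t\to-\infty$ — and its own Busemann function is computable on the nose: for every $t$ in the domain of $\gamma$ one has $b_{\sigma}(\gamma(t))=t-s_{0}$, since $d(\sigma(r),\gamma(t))=|(s_{0}-t)-r|=r+t-s_{0}$ once $r\ge s_{0}-t$, so the defining limit equals $t-s_{0}$.

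For part (1), I would then invoke the definition of a Busemann function based at $\omega$: $b=b_{\gamma''}$ for some geodesic ray $\gamma''\in\omega$, whence Lemma~\ref{identify busemann} gives a constant $c$ with $b\doteq_{72\delta}b_{\sigma}+c$. Applying this estimate at the two points $\gamma(t)$ and $\gamma(s)$ and subtracting yields
\[
b(\gamma(t))-b(\gamma(s))\doteq_{144\delta}\big(b_{\sigma}(\gamma(t))+c\big)-\big(b_{\sigma}(\gamma(s))+c\big)=(t-s_{0})-(s-s_{0})=t-s,
\]
which is \eqref{geodesic busemann equation}. Since this holds for every admissible choice of $s_{0}$, the case $a=\infty$ is covered by simply taking $s_{0}\ge\max\{s,t\}$ (indeed any $s_{0}$ will do), so all pairs $s,t\in\R$ are handled at once.

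For part (2), note that $\gamma$ is already parametrized by arclength, so an arclength reparametrization of $\gamma$ is just a shift $\t{\gamma}(t):=\gamma(t+w)$. Fixing $s_{0}$ in the domain and setting $C_{0}:=b(\gamma(s_{0}))-s_{0}$, part (1) with $s=s_{0}$ gives $b(\gamma(r))\doteq_{144\delta}r+C_{0}$ for all $r$ in the domain. Choosing $w:=u-C_{0}$ and $\t{a}:=a-w$ (with $\t{a}:=\infty$ if $a=\infty$) then gives $b(\t{\gamma}(t))=b(\gamma(t+w))\doteq_{144\delta}(t+w)+C_{0}=t+u$, as required.

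\textbf{Main obstacle.} There is no real obstacle here; the content is entirely in the explicit formula $b_{\sigma}(\gamma(t))=t-s_{0}$ and in Lemma~\ref{identify busemann}. The only points that need a little care are (i) verifying that the reversed ray $\sigma$ genuinely represents the boundary point $\omega$, so that Lemma~\ref{identify busemann} applies to the pair $(\gamma'',\sigma)$, and (ii) the bookkeeping of additive constants — in particular that comparing $b$ with $b_{\sigma}+c$ up to $72\delta$ forces the \emph{difference} $b(\gamma(t))-b(\gamma(s))$ to match $t-s$ only up to $144\delta$. Everything else is routine manipulation of parameter shifts.
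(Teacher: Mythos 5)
Your proposal is correct and follows essentially the same route as the paper: both reverse the geodesic at a basepoint to get a ray $\sigma\in\omega$ whose Busemann function is computed exactly, invoke Lemma \ref{identify busemann} to compare $b$ with $b_{\sigma}$ up to $72\delta$ plus a constant, and subtract at two points to obtain the $144\delta$ estimate, with part (2) handled by the same shift reparametrization. The only differences are cosmetic bookkeeping of the parameter $s_{0}$ and the shift constant.
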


\begin{proof}
Let $s \in (-\infty,a]$ be given and let $\sigma_{s}:[s-a,\infty) \rightarrow X$ be defined by $\sigma_{s}(t) = \gamma(s-t)$. It's easily checked from the definition \eqref{first busemann definition} that $b_{\sigma_{s}}(\sigma_{s}(t)) = -t$ for $t \in [s-a,\infty)$. Lemma \ref{identify busemann} shows that there is a constant $c \in \R$ such that $b \doteq_{72\delta} b_{\sigma_{s}} + c$. It follows that for any $t \in (-\infty,a]$, 
\[
b(\gamma(t))-b(\gamma(s)) \doteq_{144\delta} b_{\sigma_{s}}(\sigma_{s}(s-t))-b_{\sigma_{s}}(\sigma_{s}(0)) = t-s,
\]
for $t \in [-a,\infty)$. This proves (1).

For the second claim we fix an $s \in (-\infty,a)$ and define $\t{\gamma}(t) = \gamma(t-b(\gamma(s))+s+u)$ for $t \in (-\infty,\t{a}]$, $\t{a} = a-s+b(\gamma(s))-u$ (if $a = \infty$ we take $\t{a} = \infty$).  Then by \eqref{geodesic busemann equation},
\begin{align*}
b(\t{\gamma}(t)) &= b(\gamma(t-b(\gamma(s))+s+u)) \\
&\doteq_{144\delta} (t-b(\gamma(s))+s+u)-s+b(\gamma(s)) \\
&= t+u. 
\end{align*}
\end{proof}

For $x$, $y \in X$ and $b \in \hat{\mathcal{B}}(X)$ the Gromov product based at $b$ is defined by 
\[
(x|y)_{b} = \frac{1}{2}(b(x) + b(y) - |xy|). 
\]
Since $b$ is $1$-Lipschitz we have the useful inequality
\begin{equation}\label{both busemann}
(x|y)_{b} \leq \min\{b(x),b(y)\}. 
\end{equation}
For $b \in \mathcal{D}(X)$ this notion essentially reduces to the standard Gromov product: if $b(x) = |xp| +s$ for some $p \in X$ and $s \in \R$ then $(x|y)_{b} = (x|y)_{p} +s$. The analogues of all the results below then follow from the discussion in the previous section. We will thus focus on the case of Busemann functions $b \in \mathcal{B}(X)$.
 
Let $b \in \mathcal{B}(X)$ and let $\omega = \omega_{b}$ be its basepoint. The Gromov product based at $b$ is extended to $\p X$ by, for $(\xi,\zeta) \neq (\omega,\omega)$, 
\[
(\xi | \zeta)_{b} = \inf \liminf_{n \rightarrow \infty}(x_{n}|y_{n})_{b}
\]
with the infimum taken over $\{x_{n}\} \in \xi$, $\{y_{n}\} \in \zeta$ as before, and similarly for $x \in X$ and $\xi \in \p X$ we define
\[
(x| \xi)_{b} = \inf \liminf_{n \rightarrow \infty}(x | x_{n})_{b},
\]
with the infimum taken over $\{x_{n}\} \in \xi$. The next lemma extends the $4\delta$-inequality to Gromov products based at $b$. It follows from \cite[Lemma 3.2.4]{BS07}. Recall that we have multiplied their additive constants by a total of $12$ due to the differing definition of hyperbolicity and larger cutoff in defining Busemann functions; we then round up to $600\delta$ afterward. The corresponding additive constant in \cite[Lemma 3.2.4]{BS07} below is $44\delta$. 

\begin{lem}\label{busemann inequality}
Let $b$ be a Busemann function based at $\omega \in \p X$. Then 
\begin{enumerate}
\item For any $\xi$, $\zeta \in \p X \backslash \{\omega\}$ and any $\{x_{n}\} \in \xi$, $\{y_{n}\} \in \zeta$ we have 
\[
(\xi |\zeta)_{b} \leq \liminf_{n \rightarrow \infty}(x_{n}|y_{n})_{b} \leq \limsup_{n \rightarrow \infty}(x_{n}|y_{n})_{b} \leq (\xi |\zeta)_{b} + 600\delta,
\]
and the same holds if we replace $\zeta$ with $x \in X$.
\item For any $\xi,\zeta,\la \in X \cup \p_{\omega}X$ we have
\[
(\xi |\la)_{b} \geq \min \{(\xi | \zeta)_{b},(\zeta | \la)_{b}\} - 600\delta. 
\]
\end{enumerate}
\end{lem}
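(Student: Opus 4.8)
The plan is to deduce both parts directly from \cite[Lemma 3.2.4]{BS07}; the only real work is to check that our hypotheses place us in the setting of that lemma and then to propagate constants. First I would invoke Lemma \ref{equivalence busemann}: since by definition $b = b_{\gamma}$ for some geodesic ray $\gamma \in \omega$, and since $b_{\gamma} \doteq_{24\delta} b_{\omega,\gamma(0)}$, the function $b$ is a Busemann function based at $\omega$ in the sense of \cite[Chapter 3]{BS07}, provided we permit the enlarged cutoff $24\delta$ in place of the $8\delta$ used there. With this in hand, parts (1) and (2) are respectively the sequence-approximation statement and the four-point inequality of \cite[Lemma 3.2.4]{BS07}, up to the change of constants discussed next. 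The variant of (1) with $\zeta$ replaced by a point $x \in X$ is identical, working with $(x|\xi)_{b}$ in place of $(\xi|\zeta)_{b}$.

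The remaining step is the bookkeeping of constants, which I would carry out as flagged in the paragraph preceding the lemma. The additive constant $44\delta$ appearing in \cite[Lemma 3.2.4]{BS07} must be multiplied by $4$ to account for our convention that $\delta$-hyperbolicity means $\delta$-thin triangles (equivalently the $4\delta$-inequality \eqref{delta inequality}), and then by a further factor of $3$ because each appeal in their proof to the defining rough equality of a Busemann function now costs $24\delta$ rather than $8\delta$; this yields $528\delta$, which I would round up to the stated $600\delta$. I would also note explicitly that the cited statement already gives the sharpened form of (1), namely that the liminf and limsup along \emph{every} admissible pair of sequences lie within $600\delta$ of $(\xi|\zeta)_{b}$, not merely that $(\xi|\zeta)_{b}$ is their infimum.

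I expect no serious analytic difficulty here; the main obstacle is simply this constant accounting together with the bureaucratic verification that $b_{\gamma}$ meets the relaxed definition of a \cite{BS07} Busemann function. As a consistency check one can give a short self-contained argument for points of $X$: for $x,y \in X$ the quantities $|x\gamma(t)| - t$ and $|y\gamma(t)| - t$ increase to $b(x)$ and $b(y)$, so $(x|y)_{b} = \lim_{t \to \infty}\bigl[(x|y)_{\gamma(t)} - t\bigr]$ as a monotone limit, and applying the ordinary $4\delta$-inequality \eqref{delta inequality} at the basepoint $\gamma(t)$ and subtracting $t$ passes to the limit to give (2) with constant $4\delta$ for interior points, with (1) then being exact. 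Extending this hands-on argument to vertices on $\p X$ forces one to interchange a liminf over approximating sequences with the minimum in the $4\delta$-inequality and to appeal to \eqref{boundary delta inequality} and \eqref{sequence approximation}; each such step leaks a further bounded multiple of $\delta$, and it is precisely to avoid tracking these that I would route the proof through \cite[Lemma 3.2.4]{BS07}, where the generous constant $600\delta$ absorbs all of it at once.
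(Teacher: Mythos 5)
Your proposal matches the paper's own treatment: the paper likewise deduces the lemma directly from \cite[Lemma 3.2.4]{BS07}, using Lemma \ref{equivalence busemann} to place $b_{\gamma}$ in their framework with the relaxed $24\delta$ cutoff, and multiplies their $44\delta$ by $12$ (the factor $4$ for the hyperbolicity convention and $3$ for the enlarged cutoff) before rounding $528\delta$ up to $600\delta$. Your constant accounting and the sharpened form of (1) are exactly as in the paper, so the proposal is correct and takes essentially the same route.
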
 

Combining (1) of Lemma \ref{busemann inequality} with inequality \eqref{both busemann} gives for all $x,y \in X \cup \p X$ with $(x,y) \neq (\omega,\omega)$, 
\begin{equation}\label{both busemann boundary}
(x|y)_{b} \leq \min\{b(x),b(y)\}+600\delta, 
\end{equation}
where we set $b(\omega) = -\infty$ and $b(\xi) = \infty$ for $\xi \in \p_{\omega} X$.

For a point $\omega \in \p^{g} X$ belonging to the geodesic boundary, a sequence $\{x_{n}\}$ \emph{converges to infinity with respect to $\omega$} if for some Busemann function $b$ based at $\omega$ we have $(x_{m}|x_{n})_{b} \rightarrow \infty$ as $m,n \rightarrow \infty$. Two sequences $\{x_{n}\}$, $\{y_{n}\}$ converging to infinity with respect to $\omega$ are \emph{equivalent with respect to $\omega$} if $(x_{n}|y_{n})_{b} \rightarrow \infty$. These notions do not depend on the choice of Busemann function $b$ based at $\omega$ by Lemma \ref{identify busemann}. One then defines the \emph{Gromov boundary relative to $\omega$} as the set of all equivalence classes of sequences converging to infinity with respect to $\omega$. We will denote this by $\p_{\omega} X$. As our past use of the notation $\p_{\omega}X = \p X \backslash \{\omega\}$ suggests we have the following, which is \cite[Proposition 3.4.1]{BS07}. 

\begin{prop}\label{convergence Busemann}
A sequence  $\{x_{n}\}$ converges to infinity with respect to $\omega$ if and only if it converges to a point $\xi \in \p X \backslash \{\omega\}$. This correspondence defines a canonical identification of $\p_{\omega} X$ and $\p X \backslash \{\omega\}$.
\end{prop}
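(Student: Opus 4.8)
The statement to prove is Proposition~\ref{convergence Busemann}: a sequence $\{x_n\}$ converges to infinity with respect to $\omega$ if and only if it converges to a point $\xi \in \p X \backslash \{\omega\}$, and this correspondence yields a canonical identification of $\p_\omega X$ with $\p X \backslash \{\omega\}$. Fix a geodesic ray $\gamma \in \omega$ with basepoint $p = \gamma(0)$ and work throughout with the Busemann function $b = b_\gamma$; by Lemma~\ref{identify busemann} any other choice of ray in $\omega$ changes $b$ by a bounded amount and a constant, so none of the assertions depend on this choice. The central computational tool is the rough comparison between the two Gromov products $(x|y)_p$ and $(x|y)_b$. Using Lemma~\ref{equivalence busemann} (so $b_\gamma \doteq_{24\delta} b_{\omega,p}$) together with the identity $b_{\omega,p}(x) \doteq_{8\delta} |xp| - (\omega|x)_p$ cited in its proof, one gets $b(x) \doteq_{c} |xp| - 2(\omega|x)_p$ for a constant $c = c(\delta)$, and hence
\[
(x|y)_b = \tfrac12\bigl(b(x)+b(y)-|xy|\bigr) \doteq_{c'} (x|y)_p - (\omega|x)_p - (\omega|y)_p,
\]
for $c' = c'(\delta)$.

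\textbf{Main steps.} First I would prove the forward direction. Suppose $\{x_n\}$ converges to infinity with respect to $\omega$, i.e. $(x_m|x_n)_b \to \infty$. From the comparison above, $(x_m|x_n)_p \geq (x_m|x_n)_b - c'$ (using that the $(\omega|x)_p$ terms are nonnegative), so $(x_m|x_n)_p \to \infty$ and $\{x_n\}$ converges to infinity in the ordinary sense, defining a point $\xi \in \p X$. To see $\xi \neq \omega$: if $x_n \to \omega$ then $(\omega|x_n)_p \to \infty$ by \eqref{sequence approximation}, and then inequality \eqref{both busemann} gives $(x_m|x_n)_b \le \min\{b(x_m),b(x_n)\}$, while $b(x_n) \doteq |x_n p| - 2(\omega|x_n)_p$; a short estimate using $|x_n p| \geq (\omega|x_n)_p$-type bounds and \eqref{sequence approximation} forces $b(x_n)$ to be bounded above, contradicting $(x_m|x_n)_b \to \infty$. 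So $\xi \in \p X \backslash \{\omega\}$. For the reverse direction, suppose $x_n \to \xi \in \p X \backslash \{\omega\}$, so $(x_m|x_n)_p \to \infty$ and, since $\xi \neq \omega$, $(\omega|x_n)_p$ stays bounded (otherwise the $4\delta$-inequality \eqref{delta inequality} would force a tail of $\{x_n\}$ to be equivalent to $\omega$). Then the comparison identity immediately gives $(x_m|x_n)_b \to \infty$, so $\{x_n\}$ converges to infinity with respect to $\omega$.

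\textbf{The identification.} Once the two notions of ``converging to infinity'' are shown to coincide for sequences not accumulating at $\omega$, I would check that the two equivalence relations agree: $(x_n|y_n)_b \to \infty$ iff $(x_n|y_n)_p \to \infty$ for sequences converging to infinity with respect to $\omega$, by exactly the same comparison (here one must first note that if $\{x_n\}, \{y_n\}$ both converge to infinity w.r.t.\ $\omega$ and are $b$-equivalent, then neither accumulates at $\omega$, so the $(\omega|x_n)_p$, $(\omega|y_n)_p$ terms stay bounded). This shows the map sending the $\p_\omega X$-class of $\{x_n\}$ to its $\p X$-class is a well-defined bijection onto $\p X \backslash \{\omega\}$, and its independence of the basepoint $p$ and of the chosen ray $\gamma$ follows from \eqref{basepoint change} and Lemma~\ref{identify busemann}. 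Since this is cited verbatim as \cite[Proposition 3.4.1]{BS07}, the cleanest writeup is simply to invoke that reference after the preliminary lemmas have been established in our normalization; but the self-contained argument above is the substance.

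\textbf{Expected main obstacle.} The delicate point is controlling the terms $(\omega|x_n)_p$ in both directions: bounding them when $\xi \neq \omega$ (which is really the statement that a sequence converging to a boundary point other than $\omega$ cannot have Gromov product with $\omega$ tending to infinity — a consequence of the $4\delta$-inequality and the definition of the equivalence relation on $\p X$), and, conversely, ruling out $\xi = \omega$ in the forward direction. These are the steps where the boundary Gromov product estimates \eqref{sequence approximation}, \eqref{boundary delta inequality}, and \eqref{both busemann} all have to be combined carefully with the right additive constants; everything else is a routine application of the comparison identity.
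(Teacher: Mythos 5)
Your overall framework is sound and, since the paper itself gives no argument for this proposition (it is quoted verbatim from \cite[Proposition 3.4.1]{BS07}), a self-contained proof along your lines would be a reasonable addition; the comparison identity $(x|y)_{b} \doteq_{c'} (x|y)_{p} - (\omega|x)_{p} - (\omega|y)_{p}$, which follows from \eqref{final line equivalence}, is exactly the right tool, and your reverse direction (boundedness of $(\omega|x_{n})_{p}$ when $x_{n} \rightarrow \xi \neq \omega$, hence $(x_{m}|x_{n})_{b} \rightarrow \infty$) is correct.

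However, there is a genuine gap in the forward direction, at the step ruling out $\xi = \omega$. You assert that if $x_{n} \rightarrow \omega$ then a short estimate ``forces $b(x_{n})$ to be bounded above.'' This is false: in the upper half-plane model of $\mathbb{H}^{2}$ with $\gamma(t) = (0,e^{t})$, so that $b(x,y) = -\log y$ and $\omega$ is the point at infinity, the sequence $x_{n} = (n, 1/n)$ converges to $\omega$ in $\p X$ (its Gromov products with $\gamma(k)$ based at $p = (0,1)$ are comparable to $\log n$ and tend to infinity), while $b(x_{n}) = \log n \rightarrow \infty$. Equivalently, $|x_{n}p| - 2(\omega|x_{n})_{p}$ can tend to $+\infty$ along sequences converging to $\omega$, so no inequality of the advertised type ($|x_{n}p| \geq (\omega|x_{n})_{p}$ together with \eqref{sequence approximation}) can yield the boundedness you want; note also that \eqref{both busemann} only shows that convergence to infinity with respect to $\omega$ forces $b(x_{n}) \rightarrow \infty$, which is perfectly compatible with the example. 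The step is repairable inside your framework, but by a different mechanism: if a (sub)sequence satisfies $(\omega|x_{n})_{p} \rightarrow \infty$, then for each fixed $n$ one has $(x_{n}|x_{m})_{p} \leq |x_{n}p|$ bounded in $m$, so the comparison identity gives $(x_{n}|x_{m})_{b} \rightarrow -\infty$ as $m \rightarrow \infty$ with $n$ fixed; choosing $m(n) > n$ with $(x_{n}|x_{m(n)})_{b} \leq 0$ produces pairs of arbitrarily large indices violating $(x_{m}|x_{n})_{b} \rightarrow \infty$, which is the desired contradiction. The same repair is needed where you claim, in the identification step, that two $b$-equivalent sequences cannot accumulate at $\omega$; with that fixed, the rest of your argument (and the independence of basepoint and of the ray via \eqref{basepoint change} and Lemma \ref{identify busemann}) goes through.
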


We recall that for $\omega \in X$ we will often abuse terminology and also refer to $\p_{\omega}X = \p X$ as the Gromov boundary relative to $\omega$.

\subsection{Visual metrics}\label{subsec:visual} Let $K \geq 1$ and let $Z$ be a set. A function $\theta: Z \times Z \rightarrow [0,\infty)$ is a \emph{$K$-quasi-metric} if the following holds for any $z,z',z'' \in Z$,
\begin{enumerate}
\item $\theta(z,z') = 0$ if and only if $z = z'$, 
\item $\theta(z,z')= \theta(z',z)$,
\item $\theta(z,z'') \leq K \max\{\theta(z,z'),\theta(z',z'')\}$. 
\end{enumerate} 
By a standard construction (see \cite[Lemma 2.2.5]{BS07}) a $K$-quasi-metric with $K \leq 2$ is always $4$-biLipschitz to a metric on $Z$. Since for $\e > 0$ we have that $\theta^{\e}$ is a $K^{\e}$ quasi-metric if $\theta$ is a $K$-quasi-metric, for any quasi-metric $\theta$ we always have that $\theta^{\e}$ is $4$-biLipschitz to a metric $d$ on $Z$ (by the identity map on $Z$) whenever $\e$ is small enough that $K^{\e} \leq 2$.

Let $X$ be a geodesic $\delta$-hyperbolic space. For $x \in X$ and $\e > 0$ we define for $\xi$, $\zeta \in \p X$,
\begin{equation}\label{visual base}
\theta_{\e,x}(\xi,\zeta) = e^{-\e (\xi|\zeta)_{x}},
\end{equation}
with the understanding that $e^{-\infty} = 0$. By \eqref{boundary delta inequality} the function $\theta_{\e,x}$ defines an $e^{8\delta \e}$-quasi-metric on $\p X$. We refer to any metric $\theta$ on $\p X$ that is $L$-biLipschitz to $\theta_{\e,x}$ as a \emph{visual metric} on $\p X$ based at $x$ with parameter $\e$; we call $L$ the \emph{comparison constant} to the model quasi-metric $\theta_{\e,x}$. A visual metric always exists once $\e$ is small enough that $e^{8\delta \e} \leq 2$. We give $\p X$ the topology induced by any visual metric. Equipped with a visual metric with respect to any basepoint $x \in X$ and any parameter $\e > 0$ the set $\p X$ is a complete bounded metric space. The basepoint change inequality \eqref{basepoint change} combined with inequality \eqref{sequence approximation} shows that the notion of a visual metric does not actually depend on the choice of basepoint $x \in X$, however the comparison constant to the quasi-metric \eqref{visual base} will depend on the basepoint. For $b \in \mathcal{D}(X)$ of the form $b(y) = d(x,y) + s$ for some $s \in \R$ we then define
\begin{equation}\label{D extension}
\theta_{\e,b}(\xi,\zeta) = e^{-\e (\xi|\zeta)_{b}} = e^{-\e s}\theta_{\e,x}(\xi,\zeta).
\end{equation}
 
Let $\omega \in \p^{g} X$ be a point of the geodesic boundary and let $b \in \mathcal{B}(X)$ be a Busemann function based at $\omega$. We define for $\e > 0$ and $\xi, \zeta \in \p_{\omega} X$, 
\begin{equation}\label{visual quasi}
\theta_{\e,b}(\xi,\zeta) = e^{-\e (\xi|\zeta)_{b}}.
\end{equation}
Then $\theta_{\e,b}$ defines an $e^{600\delta \e}$-quasi-metric on $\p_{\omega} X$ by Lemma \ref{busemann inequality}. A \emph{visual metric} based at $\omega$ with parameter $\e$ is defined to be any metric $\theta$ on $\p_{\omega} X$ that is $L$-biLipschitz to $\theta_{\e,b}$, and as before we will call $L$ the comparison constant to the model quasi-metric $\theta_{\e,b}$. Since all Busemann functions associated to $\omega$ differ from each other by a constant, up to a bounded error (by Lemma \ref{identify busemann}), the notion of a visual metric based at $\omega$ does not depend on the choice of Busemann function $b$ based at $\omega$. Equipped with any visual metric based at $\omega$ the metric space $\p_{\omega}X$ is complete. It is bounded if and only if $\omega$ is an isolated point in $\p X$. 

%\begin{rem}\label{visual constant} 
%The definition of a visual metric allows for any size of biLipschitz constant comparing to the quasi-metric $\theta_{q,b}$. To avoid introducing this additional comparison constant in the proofs (in particular in the proof of Theorem \ref{identification theorem}), we will constrain our visual metrics to have  comparison constants depending only on the parameters under consideration. This is sufficient to handle the key application in Theorem \ref{filling theorem}. We will refer to the constant comparing our visual metric to the model \eqref{visual quasi} as the \emph{comparison constant}. 
%\end{rem}

\begin{rem}\label{CAT visual}
For CAT$(-1)$ spaces the quasi-metric $\theta_{1,b}$ for $b \in \hat{\mathcal{B}}(X)$ with basepoint $\omega$ defines a distinguished visual metric on $\p_{\omega} X$ with parameter $\e = 1$. This metric is known as a \emph{Bourdon metric} when $b \in \mathcal{D}(X)$ and a \emph{Hamenst\"adt metric} when $b \in \mathcal{B}(X)$. The basic properties of the Bourdon metric for CAT$(-1)$ spaces were established by Bourdon in \cite{Bou95}. The Hamenst\"adt metric was introduced by Hamenst\"adt in the setting of Hadamard manifolds with sectional curvatures $\leq -1$ in \cite{Ham89} through a slightly different construction. The formulation for CAT$(-1)$ spaces using Gromov products based at $b$ is due to Foertsch-Schroeder \cite{FS11}. 
\end{rem}

\section{Tripod maps and Busemann functions}\label{sec:tripod}

In this section we let $X$ be a geodesic $\delta$-hyperbolic space for a given parameter $\delta \geq 0$. We will be establishing some standard claims regarding geodesic triangles in $X$ that have vertices on the Gromov boundary $\p X$. We will then use these claims regarding geodesic triangles in $X$ to evaluate  Busemann functions on geodesics in $X$ in Proposition \ref{compute Busemann} and Lemma \ref{star parametrize}.  When $X$ is proper these claims can be obtained via limiting arguments from the corresponding claims for geodesic triangles in \cite[Chapitre 2]{GdH90}. Without the properness hypothesis they may be obtained (with larger constants) by careful examination and specialization of the results of V\"ais\"al\"a on roads and biroads in $\delta$-hyperbolic space \cite[Section 6]{V05}.  We will provide more direct proofs of these results here, as we will also need to use some particular corollaries of the proofs that cannot be found in \cite{V05}. Providing our own proofs also allows us to organize the results in a manner that is convenient for our applications. 

\subsection{Tripod maps} We start with a definition. The terminology is taken from \cite[Chapter 2]{BS07}. Compare \cite[Chapitre 2, D\'efinition 18]{GdH90}. 

\begin{defn}\label{equiradial definition}
Let $\Delta$ be a geodesic triangle in $X$ with vertices $x,y,z \in X \cup \p X$ and let $\chi \geq 0$ be given. A collection of points $\hat{x} \in yz$, $\hat{y} \in xz$, $\hat{z} \in xy$ is \emph{$\chi$-equiradial} if 
\[
\mathrm{diam}\{\hat{x},\hat{y},\hat{z}\} = \max\{|\hat{x}\hat{y}|,|\hat{y}\hat{z}|,|\hat{x}\hat{z}|\} \leq \chi. 
\]
We then refer to $\hat{x}$, $\hat{y}$, $\hat{z}$ as \emph{$\chi$-equiradial points} for $\Delta$. 
\end{defn}

\begin{rem}\label{alternate}
For $x,y,z \in X$ Definition \ref{equiradial definition} makes sense in any geodesic metric space $X$. Taking $\chi = \delta$ gives yet another quantitatively equivalent definition of $\delta$-hyperbolicity for $X$. See \cite[Chapitre 2, Proposition 21]{GdH90}. 
\end{rem}

When $x,y,z \in X$, the $4\delta$-tripod condition directly provides us with a set of $4\delta$-equiradial points $\hat{x}$, $\hat{y}$, $\hat{z}$ defined by the system of equalities $|x\hat{y}| = |x\hat{z}| = (y|z)_{x}$, $|y\hat{x}| = |y\hat{z}| = (x|z)_{y}$, and $|z\hat{x}| = |z\hat{y}| = (x|y)_{z}$. We will often refer to these points as the \emph{canonical equiradial points} for $\Delta$, since they are uniquely determined. The following definition encodes a convenient hypothesis to make on equiradial points of a geodesic triangle $\Delta$ that partially generalizes the notion of canonical equiradial points to the case that some of the vertices of $\Delta$ belong to $\p X$. 

We adopt the notation convention for $x,y \in X \cup \p X$ that $|xy| = \infty$ if $x \neq y$ and one of $x$ or $y$ belongs to $\p X$ and $|xy| = 0$ if $x = y$.

\begin{defn}\label{calibrated}
Let $\Delta$ be a geodesic triangle in $X$ with vertices $x,y,z \in X \cup \p X$, let $\chi \geq 0$ be given, and let $(\hat{x},\hat{y},\hat{z})$ be a collection of $\chi$-equiradial points for $\Delta$. We say that this collection is \emph{calibrated} if we have $|\hat{x}z| = |\hat{y}z|$, $|\hat{y}x| = |\hat{z}x|$, and $|\hat{z}y| = |\hat{x}y|$.
\end{defn}

This condition is trivially satisfied when all vertices of $\Delta$ belong to $\p X$, since all of the subsegments involved have infinite length.

We let $\Upsilon$ be the tripod geodesic metric space composed of three copies $L_{1}$, $L_{2}$, and $L_{3}$ of the closed half-line $[0,\infty)$ identified at $0$.  This identification point will be denoted by $o$ and will be referred to as the \emph{core} of the tripod $\Upsilon$. The space $\Upsilon$ is clearly $0$-hyperbolic. The Gromov boundary $\p \Upsilon$ consists of three points $\zeta_{i}$, $i = 1,2,3$, corresponding to the half-lines $L_{i}$ thought of as geodesic rays starting from the core $o$. 

For a geodesic triangle $\Delta$ with a calibrated ordered triple of $\chi$-equiradial points $(\hat{x}, \hat{y}, \hat{z})$ as in Definitions \ref{equiradial definition} and \ref{calibrated}, we define the associated \emph{tripod map} $T: \Delta \rightarrow \Upsilon$ to be the map that sends the sides $xz$, $yz$, and $xy$ isometrically into $L_{1} \cup L_{3}$, $L_{2} \cup L_{3}$, and $L_{1} \cup L_{2}$ respectively in the unique way that satisfies $T(x) \in L_{1} \cup \{\zeta_{1}\}$, $T(y) \in L_{2} \cup \{\zeta_{2}\}$,$T(z) \in L_{3} \cup \{\zeta_{3}\}$, and $T(\hat{x}) = T(\hat{y}) = T(\hat{z}) = o$. To be more precise for boundary points, when $x \in \p X$ we mean here that $T(x) = \zeta_{1}$, i.e., $T$ maps the geodesic rays $\hat{y}x$ and $\hat{z}x$ isometrically onto $L_{1}$. A choice of ordering of the equiradial points is required to define the map $T$ but is not important, as changing the ordering simply corresponds to permuting the rays $L_{i}$ in $\Upsilon$ while keeping the core $o$ fixed. 

%The $4\delta$-tripod condition directly implies that the associated tripod map $T$ for these canonical equiradial points is $8\delta$-roughly isometric. 

We first obtain the following direct consequence of Lemma \ref{infinite thin}.

\begin{lem}\label{infinite equiradial}
Let $\Delta$ be a geodesic triangle with vertices $x,y,z \in X \cup \p X$. Then there is a calibrated $60\delta$-equiradial collection of points $\hat{x} \in yz$, $\hat{y} \in xz$, $\hat{z} \in xy$. 
\end{lem}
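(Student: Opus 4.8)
The plan is to reduce everything to the $10\delta$-thinness of $\Delta$ established in Lemma \ref{infinite thin}, much as the existence of canonical equiradial points for triangles in $X$ follows from the $4\delta$-tripod condition. Fix the triangle $\Delta$ with vertices $x,y,z \in X \cup \p X$. The first observation is that for each point $u$ lying on one edge, say $u \in xy$, Lemma \ref{infinite thin} gives a point on one of the other two edges within $10\delta$ of $u$; running $u$ along $xy$ from $x$ toward $y$, by a continuity/intermediate-value argument there is a point $\hat z \in xy$ that is simultaneously $10\delta$-close to a point of $xz$ and $10\delta$-close to a point of $yz$. This is the heart of the matter, and the main obstacle: when the relevant vertices are ideal the edges are rays or lines, so ``running along the edge'' must be phrased using an arclength parametrization on $(-\infty,0]$ or $\R$, and one must argue that the set of parameters where $u$ is close to $xz$ and the set where it is close to $yz$ are both closed, cover the whole parameter interval, and hence (by connectedness) intersect. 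The closedness uses that $xz$ and $yz$, being images of isometric embeddings of intervals, are closed subsets of $X$, so $u \mapsto \dist(u,xz)$ is continuous; the covering is exactly Lemma \ref{infinite thin}.

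Once $\hat z \in xy$ is produced together with $\hat y \in xz$ and $\hat x \in yz$ with $|\hat z \hat y| \le 20\delta$ and $|\hat z \hat x| \le 20\delta$ (each obtained as $|\hat z \cdot\text{(the close point)}| \le 10\delta$ plus a second $10\delta$ coming from the other edge; the bookkeeping here is routine and I will not grind through it), the triangle inequality gives $|\hat x \hat y| \le 40\delta$, so $\{\hat x,\hat y,\hat z\}$ is a $40\delta$-equiradial collection. It remains to upgrade this to a \emph{calibrated} collection while only mildly worsening the constant. For this I would keep $\hat z \in xy$ as a ``base'' point and then \emph{redefine} $\hat x$ and $\hat y$ so that the calibration equalities $|\hat y x| = |\hat z x|$, $|\hat z y| = |\hat x y|$, $|\hat x z| = |\hat y z|$ hold. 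When $x \in X$, set $\hat y \in xz$ to be the point with $|\hat y x| = |\hat z x|$; when $x \in \p X$ this equality is vacuous and $\hat y$ is unconstrained from the $x$-side. Similarly define $\hat x \in yz$ from the $y$-side using $|\hat z y|$. Finally the $z$-side equality $|\hat x z| = |\hat y z|$ is automatic when $z \in \p X$, and when $z \in X$ one uses it to fix, say, $\hat x$ (or slides both by the common amount). The point is that each such redefinition moves a point by a controlled distance: if $\hat y'$ is the recalibrated point on $xz$ with $|x\hat y'| = |x \hat z|$, then $\big||x\hat y'| - |x\hat y|\big| \le |\hat z \hat y| \le 20\delta$ (triangle inequality), so $|\hat y \hat y'| = \big||x\hat y'| - |x\hat y|\big| \le 20\delta$ since $\hat y,\hat y'$ lie on the same geodesic; hence the diameter grows by at most a bounded multiple of $\delta$.

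Carrying out this recalibration carefully while tracking the constant is the only real computation, and it is purely additive: starting from a $40\delta$-equiradial collection and applying at most a couple of slides of size $O(\delta)$ each (plus triangle-inequality compositions) comfortably keeps the diameter below $60\delta$. I would organize the write-up as: (i) the continuity/connectedness argument producing $\hat z \in xy$ close to both other edges, invoking Lemma \ref{infinite thin}; (ii) the triangle-inequality bookkeeping giving an uncalibrated $40\delta$-equiradial triple; (iii) the slide-to-calibrate step with the $|\hat y \hat y'| \le 20\delta$ type estimates, case-splitting on which vertices are ideal (the all-ideal case being trivial by the remark after Definition \ref{calibrated}); (iv) summing the errors to land at $60\delta$. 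The case analysis on how many of $x,y,z$ lie in $\p X$ is where one must be most careful, but in each case the vacuous calibration equalities only make the argument easier.
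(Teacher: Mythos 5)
Your overall strategy is the same as the paper's (run along the edge $xy$, use $10\delta$-thinness from Lemma \ref{infinite thin} to get two closed sets of parameters, intersect them by connectedness, then slide points along the other two edges to calibrate), but there is a genuine gap at the central step. You assert that the two parameter sets (``close to $xz$'' and ``close to $yz$'') are closed, cover the interval, ``and hence (by connectedness) intersect.'' That inference is false as stated: two closed sets covering a connected interval need not intersect if one of them is empty, and nonemptiness of both sets is exactly the point that requires a real argument when the edge $xy$ has an ideal endpoint. If $y \in \p X$, the edge is a ray and there is no endpoint of $xy$ sitting on $yz$ to guarantee membership in $E_y$; the paper devotes a separate argument to this, showing by contradiction that if every $\gamma(n)$ along $xy$ were $10\delta$-close to $xz$, then points of $xz$ would converge to $y$ in $\p X$, which is impossible because a sequence in $xz$ going to infinity can only converge to $x$ or $z$ (and $y\neq x,z$ by Remark \ref{distinct}). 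Without this boundary-convergence step your connectedness argument does not close.

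Two secondary issues. First, your calibration-by-sliding scheme is only coherent when the vertex opposite the edge you ran along is ideal: if $z \in X$ as well, the three equalities $|x\hat y|=|x\hat z|$, $|y\hat x|=|y\hat z|$, $|z\hat x|=|z\hat y|$ over-determine the points (they force the canonical equiradial points, whose existence comes from the $4\delta$-tripod condition, not from sliding an arbitrary $\hat z$), and feasibility of a slide along a finite side is not automatic. The paper avoids this by disposing of the all-finite case with the canonical equiradial points and then relabeling so that the ideal vertex is $z$, which makes the $z$-equalities vacuous ($|xz|=|yz|=\infty$) and makes every needed slide feasible. Second, your constant bookkeeping is too loose: starting from a $40\delta$-equiradial triple and performing two slides, each of which you only bound by $20\delta$, the diameter can a priori reach $80\delta$, not ``comfortably below $60\delta$''; with the tight bounds ($|w u|,|w v|\le 10\delta$ at the intersection point, so the initial triple is $20\delta$-equiradial) the count lands exactly at $60\delta$ as in the paper, so this is repairable but not routine enough to wave off.
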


\begin{proof}
If all vertices of $\Delta$ belong to $X$ then the canonical equiradial points give a calibrated $4\delta$-equiradial collection for $\Delta$, so we can assume that at least one vertex of $\Delta$ belongs to $\p X$. Thus we can assume without loss of generality that $z \in \p X$. 

Parametrize the side $xy$ by arclength as $\gamma: I \rightarrow X$ for an interval $I \subset \R$, oriented from $x$ to $y$. Let $E_{x} \subset I$ be the collection of times $t$ such that $\mathrm{dist}(\gamma(t),xz) \leq 10\delta$ and $E_{y} \subset I$ the collection of times $t$ such that $\mathrm{dist}(\gamma(t),yz) \leq 10\delta$. Each of the sets $E_{x}$ and $E_{y}$ are closed and we have $E_{x} \cup E_{y} = I$ by Lemma \ref{infinite thin}. We claim that both $E_{x}$ and $E_{y}$ are always nonempty. For this we can assume without loss of generality that $E_{x}$ is nonempty since $E_{x} \cup E_{y} = I$. 

If $E_{y} = \emptyset$ then $E_{x} = I$. For each $t \in I$ we let $x_{t} \in xz$ be a point such that $|x_{t}\gamma(t)| \leq 10\delta$. For $n \in \N$ the sequence $\{\gamma(n)\}$ converges to $y$, which implies that the sequence $\{x_{n}\}$ converges to $y$ since these sequences are a bounded distance from one another. However any sequence of points converging to infinity in $xz$ can only possibly converge to $x$ or $z$, which is a contradiction. Thus $E_{y}$ must also be nonempty. 

By the connectedness of $I$ we then conclude that $E_{x} \cap E_{y} \neq \emptyset$. Letting $s \in E_{x} \cap E_{y}$, setting $w:=\gamma(s)$, and selecting points $u \in xz$, $v \in yz$ such that $|wu| \leq 10\delta$ and $|wv| \leq 10\delta$, we conclude that $\{w,u,v\}$ is a $20\delta$-equiradial collection of points for $\Delta$. 

Lastly we need to produce a calibrated collection of equiradial points from the collection $\{w,u,v\}$. If all vertices of $\Delta$ belong to $\p X$ then the collection is trivially calibrated, so we can assume at least one vertex of $\Delta$ belongs to $X$. By relabeling the vertices we can then assume that either  $x \in X$ and $y \in \p X$ or $x \in X$ and $y \in X$. In both cases we can find $u' \in xz$ such that $|xu'| = |xw|$ since $|xz| = \infty$. Then 
\begin{equation}\label{u u}
|uu'| = ||xu|-|xu'|| = ||xu|-|xw|| \leq |uw| \leq 20\delta. 
\end{equation}
It follows that the collection $\{w,u',v\}$ is $40\delta$-equiradial. If $y \in \p X$ then this collection is also calibrated and we are done. 

If $y \in X$ then we repeat this argument again by using the fact that $|yz| = \infty$ to find $v' \in yz$ such that $|yv'| = |yw|$. The calculation \eqref{u u} then shows that $|vv'| \leq 20\delta$ as well. We can then conclude that the collection $\{w,u',v'\}$ is calibrated and $60\delta$-equiradial, as desired. 
\end{proof}

Our next goal will be to prove that the tripod map $T: \Delta \rightarrow \Upsilon$ associated to the calibrated collection of equiradial points produced by Lemma \ref{infinite equiradial} is roughly isometric. We will require the following simple lemma.  

\begin{lem}\label{simple}
Let $X$ be a metric space and let $x,y,z \in X$ with $|xz| \leq |yz|$. Suppose that we are given geodesics $xz$ and $yz$ joining $x$ to $z$ and $y$ to $z$ respectively. Let $u \in xz$, $v \in yz$ be given points that satisfy $|xu| = |yv|$ and let $w \in yz$ be the unique point satisfying $|wz| = |uz|$. Then $w \in vz$ and $|wv| \leq |xy|$. 
\end{lem}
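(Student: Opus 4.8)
The plan is to work entirely within the metric space $X$ using the triangle inequality and the definitions, treating the geodesic $yz$ as an isometrically embedded interval so that "betweenness" and length along $yz$ are unambiguous. First I would record the basic setup: since $u \in xz$ and $w \in yz$ with $|wz| = |uz|$, and since $|xu| = |yv|$, I can express all the relevant distances along the geodesics in terms of $|xz|$, $|yz|$, and the common value $t := |xu| = |yv|$. Concretely, $|uz| = |xz| - t$, so $|wz| = |xz| - t$, while $|vz| = |yz| - t$. Because $|xz| \le |yz|$ we get $|wz| = |xz| - t \le |yz| - t = |vz|$, and since $w$ and $v$ both lie on the geodesic $yz$ with $w$ no farther from $z$ than $v$, the point $w$ lies on the subsegment $vz$. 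This is the first assertion.

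For the second assertion, I would estimate $|wv|$. Both $w$ and $v$ lie on $yz$, so $|wv| = |vz| - |wz| = (|yz| - t) - (|xz| - t) = |yz| - |xz|$. So it suffices to show $|yz| - |xz| \le |xy|$, which is exactly the triangle inequality $|yz| \le |xz| + |xy|$. That completes the proof; the only subtlety worth a sentence is checking that the hypotheses are consistent, i.e. that $t \le |xz|$ so that $u$ and $w$ genuinely exist on the respective segments — but this is part of the hypothesis that $u \in xz$ (forcing $|xu| \le |xz|$) together with $|yv| = |xu|$.

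Honestly there is no real obstacle here: the lemma is a pure exercise in one-dimensional bookkeeping along the two geodesics $xz$ and $yz$, and the content is just the triangle inequality $|yz| - |xz| \le |xy|$. The one thing to be careful about in writing it up is to phrase the "betweenness" claim $w \in vz$ precisely — that the three points $v$, $w$, $z$ satisfy $|vz| = |vw| + |wz|$, which follows because all three lie on the single geodesic $yz$ and their distances from $z$ are ordered as $|wz| \le |vz|$. I would present this as a short two-paragraph argument: one paragraph establishing the distance identities and the ordering that gives $w \in vz$, and a second one-line paragraph deducing $|wv| = |yz| - |xz| \le |xy|$.
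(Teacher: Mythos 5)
Your proposal is correct and follows essentially the same route as the paper: both arguments are pure additivity-of-length bookkeeping along the geodesics $xz$ and $yz$, giving $|wz|=|xz|-t \le |yz|-t=|vz|$ (hence $w \in vz$) and then $|wv| = |yz|-|xz| \le |xy|$ by the triangle inequality. The only cosmetic difference is that the paper establishes $w \in vz$ by contradiction (assuming $w \in yv$, $w \ne v$, and deriving $|yz|<|xz|$), whereas you order $w$ and $v$ directly by their distances to $z$ along the geodesic, which is equally valid.
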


\begin{proof}
The point $w$ must belong to the subsegment $vz$ of $yz$, as if $w \in yv$ and $w \neq v$ then 
\begin{align*}
|yz| &= |yv| + |wz| - |wv| \\
&= |xu| + |uz| - |wv| \\
 &= |xz|-|wv| \\
 & < |xz|,
\end{align*}
contradicting that $|yz| \geq |xz|$. Since $w \in vz$ we then have
\begin{align*}
|yz| &= |yv| + |vw| + |wz| \\
&= |xu| + |vw| + |uz| \\
&= |xz| + |vw|,
\end{align*}
which implies by the triangle inequality that $|vw| \leq |xy|$. 
\end{proof}

We now apply Lemma \ref{simple} to the setting of a $\delta$-hyperbolic space $X$.

\begin{lem}\label{infinite triangle}
Let $x,y \in X$, let $z \in X \cup \p X$, and let $\bar{x} \in xz$, $\bar{y} \in yz$ satisfy $|x\bar{x}| = |y\bar{y}|$. Then we have
\begin{equation}\label{combined inequality}
|\bar{x}\bar{y}| \leq 3|xy|+8\delta. 
\end{equation}
\end{lem}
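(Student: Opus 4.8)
The plan is to reduce the statement to Lemma~\ref{simple} by a case analysis on whether $z$ lies in $X$ or on $\p X$, and in the former case on which of the two subsegments $xz$, $yz$ is longer. First consider the case $z \in X$; by symmetry we may assume $|xz| \leq |yz|$. Let $w \in yz$ be the point with $|wz| = |\bar{x}z|$. I would like to compare $\bar{x}$ with $w$ using the $4\delta$-tripod condition: both $\bar x$ and $w$ are at distance $|\bar xz|$ from $z$ along the sides $xz$ and $yz$ of the triangle $xyz$, so provided $|\bar x z| \le (x|y)_z$ we get $|\bar x w| \le 4\delta$. If instead $|\bar x z| > (x|y)_z$, then $\bar x$ lies on the ``$z$-stub'' of the tripod, within $4\delta$ of the $z$-equiradial points, so $|\bar x w| \le 4\delta$ still holds (the canonical equiradial point on $xz$ is at distance $(x|y)_z < |\bar x z|$ from $z$, hence between $x$ and $\bar x$, and $w$ lies correspondingly between the $yz$-equiradial point and $z$). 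Either way $|\bar x w| \le 4\delta$.

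Now I need to relate $w$ to $\bar y$. Apply Lemma~\ref{simple} with the roles as follows: $u = \bar x \in xz$, $v = \bar y \in yz$, and $|xu| = |x\bar x| = |y\bar y| = |yv|$ by hypothesis; the point $w$ above is exactly the point in the conclusion of Lemma~\ref{simple}, and the lemma gives $|w\bar y| \le |xy|$. Combining, $|\bar x\bar y| \le |\bar x w| + |w\bar y| \le 4\delta + |xy|$, which is stronger than \eqref{combined inequality}. (One must be slightly careful that the hypothesis $|xz| \le |yz|$ of Lemma~\ref{simple} matches the case assumption, and that $\bar x$ actually lies on $xz$ with $|x\bar x| \le |xz|$ — automatic since $\bar x \in xz$.)

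The remaining case is $z \in \p X$. Here neither $|xz|$ nor $|yz|$ is finite, so Lemma~\ref{simple} does not apply directly, and this is where I expect the main (though still mild) obstacle to be. The idea is to exhaust: pick points $z_n \to z$ along a geodesic ray from, say, $x$ toward $z$ — more precisely take $z_n$ on the geodesic $xz$ with $|xz_n| \to \infty$, and let $\sigma_n$ be a geodesic from $y$ to $z_n$, forming a triangle $\Delta_n = xyz_n$ with all vertices in $X$ and sharing the edge $xz$ (up to the truncation $xz_n$). For $n$ large, $\bar x$ and $\bar y$ lie on the truncated sides $xz_n$ and (a fellow-traveling copy of) $yz_n$; applying the finite case to $\Delta_n$ gives a point $\bar y_n$ on $\sigma_n$ with $|x\bar x| = |y\bar y_n|$ and $|\bar x\bar y_n| \le 4\delta + |xy|$. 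Then I must transfer $\bar y_n$ back to $\bar y$ on the actual geodesic $yz$: since $\sigma_n$ and $yz$ are both geodesics from $y$ eventually heading to $z$, and $\bar y_n$, $\bar y$ are at equal distance $|y\bar y| \le (\text{appropriate Gromov product, large for } n \text{ large})$ from $y$, the $4\delta$-tripod condition applied to the triangle with vertices $y$, $z_n$, $z$ (or rather $y$ and two sequences converging to $z$) yields $|\bar y_n \bar y| \le 4\delta$ once $n$ is large enough. Hence $|\bar x\bar y| \le |\bar x\bar y_n| + |\bar y_n\bar y| \le 8\delta + |xy|$. This is where the constant $8\delta$ in \eqref{combined inequality} is genuinely needed. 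Alternatively — and this is likely cleaner — one invokes Lemma~\ref{infinite thin} and Lemma~\ref{infinite equiradial} directly on the (possibly partly ideal) triangle $xyz$: take a calibrated $60\delta$-equiradial collection $(\hat x, \hat y, \hat z)$, compare $\bar x$ to $\hat y$ on $xz$ and $\bar y$ to $\hat x$ on $yz$ via the distance bookkeeping $|x\bar x| = |y\bar y|$, $|x\hat y| = |x\hat z|$, $|y\hat x| = |y\hat z|$, and $|\hat x\hat y| \le 60\delta$, and track how far $\bar x$, $\bar y$ are from the ``branch point'' region; the factor $3$ in \eqref{combined inequality} absorbs the three segments (from $\bar x$ toward the branch, across the $60\delta$-region, and back out to $\bar y$) that appear when $|x\bar x|$ is not exactly $(y|z)_x$. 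Given the generous constants in \eqref{combined inequality}, I would pursue whichever of these two routes the preceding lemmas make most economical, expecting the tripod-map estimate of the next lemma in the paper (not yet stated in the excerpt) to be the intended tool.
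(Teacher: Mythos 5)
There is a genuine gap in your treatment of the case $z \in X$, at the step where you claim that the $4\delta$-tripod condition gives $|\bar{x}w| \leq 4\delta$ even when $|\bar{x}z| > (x|y)_{z}$. This is false: in that regime $\bar{x}$ lies on the portion of $xz$ beyond the canonical equiradial point (on the $x$-side of it), while $w$ lies on the portion of $yz$ beyond the equiradial point on the $y$-side, so under the tripod map they land on two \emph{different} legs and can be arbitrarily far apart (already in a metric tree, where $\delta = 0$, one gets $|\bar{x}w| = 2\bigl(|\bar{x}z| - (x|y)_{z}\bigr)$, which is unbounded). Your parenthetical justification also misplaces the equiradial point: if $|\bar{x}z| > (x|y)_{z}$ then the equiradial point lies between $\bar{x}$ and $z$, not between $x$ and $\bar{x}$. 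The missing idea is the paper's initial reduction: if $t := |x\bar{x}| = |y\bar{y}| \leq |xy|$ then the naive triangle inequality already gives $|\bar{x}\bar{y}| \leq 2t + |xy| \leq 3|xy|$ (this is exactly where the factor $3$ in \eqref{combined inequality} comes from), while if $t > |xy|$ the problematic subcase cannot occur, since then $|\bar{x}z| = |xz| - t < |xz| - (y|z)_{x} = (x|y)_{z}$ (using $(y|z)_{x} \leq |xy|$), so the tripod condition applies legitimately and, combined with Lemma \ref{simple} exactly as you use it, yields $|xy| + 4\delta$. With that reduction in place your finite case becomes the paper's argument.

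Your sketch for $z \in \p X$ is in the right spirit and close to the paper's (the paper truncates both sides at $x_{s} \in xz$, $y_{s} \in yz$ with $(x_{s}|y_{s})_{x}, (x_{s}|y_{s})_{y} \geq t$ and runs the finite argument through two triangles sharing the edge $xy_{s}$, again obtaining $|xy| + 8\delta$ only after the same reduction to $t > |xy|$), but as written it inherits the gap since it invokes your finite case; the transfer of $\bar{y}_{n}$ to $\bar{y}$ along the two geodesics toward $z$ also needs the quantitative condition that the relevant Gromov product exceeds $t$ before the tripod condition applies, which is the same point. Finally, your proposed alternative route through Lemma \ref{infinite equiradial} cannot deliver the stated bound: the calibrated equiradial points there are only $60\delta$-equiradial, so any estimate obtained that way carries additive constants of order $60\delta$ or more, exceeding the $8\delta$ in \eqref{combined inequality} (and note that Proposition \ref{rough tripod}, the tripod-map estimate you hoped to lean on, is itself proved \emph{from} this lemma).
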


\begin{proof}
Set $t = |x\bar{x}| = |y\bar{y}|$. If $t \leq |xy|$ then
\[
|\bar{x}\bar{y}| \leq |\bar{x}x| + |xy| + |\bar{y}y| \leq 3|xy|,
\]
which verifies inequality \eqref{combined inequality}. We can thus assume that $t > |xy|$.

We first assume that $z \in X$. We can then assume without loss of generality that $|xz|\leq |yz|$. We consider a geodesic triangle $\Delta = xyz$ with sides the given geodesics $xz$ and $yz$, as well as a geodesic $xy$ from $x$ to $y$. Let $w \in yz$ be the unique point such that $|wz| = |\bar{x}z|$. Lemma \ref{simple} shows that $w \in \bar{y}z$ and $|w\bar{y}| \leq |xy|$. 

Let $x' \in xz$ and $y' \in yz$ be the canonical equiradial points for $\Delta$ on these edges. These points must satisfy $\max\{|x'x|,|y'y|\} \leq |xy|$ since $xy$ is an edge of $\Delta$. The assumption $t > |xy|$ then implies that $\bar{x} \in x'z$ and $\bar{y} \in y'z$. Thus $w \in y'z$. The $4\delta$-tripod condition then implies that $|w\bar{x}| \leq 4\delta$, from which it follows that $|\bar{x}\bar{y}| \leq |xy| + 4\delta$. This proves \eqref{combined inequality} in this case. 

We now consider the case $z \in \p X$. For each $s \geq 0$  we define $x_{s} \in xz$, $y_{s} \in yz$ to be the points such that $|x x_{s}| = s$ and $|yy_{s}| = s$. Since the geodesics $xz$ and $yz$ have the same endpoint $z \in \p X$, we must have $(x_{s}|y_{s})_{x} \rightarrow \infty$ as $s \rightarrow \infty$ and the same for $(x_{s}|y_{s})_{y}$. We choose $s$ large enough that $(x_{s}|y_{s})_{x} \geq t$ and $(x_{s}|y_{s})_{y} \geq t$. We consider a geodesic triangle $\Delta_{1} = xx_{s}y_{s}$ with edges the subsegment $xx_{s}$ of the given geodesic $xz$ as well as geodesics $x_{s}y_{s}$ and $xy_{s}$, and a triangle $\Delta_{2} = xy y_{s}$ with edges the subsegment $yy_{s}$ of the given geodesic $yz$, the edge $xy_{s}$ of $\Delta_{1}$, and a geodesic $xy$. Then $\bar{x} \in xx_{s}$ and $\bar{y} \in yy_{s}$ by our choice of $s$.

Since $(x_{s}|y_{s})_{x} \geq t$, we must have $|xy_{s}| \geq t$. Therefore there is a unique point $w \in xy_{s}$ such that $|xw| = |x\bar{x}| = t$. The $4\delta$-tripod condition applied to the triangle $\Delta_{1}$ then implies that $|\bar{x}w| \leq 4\delta$. If $|xy_{s}| \leq |yy_{s}|$ then we let $u \in yy_{s}$ be the unique point such that $|uy_{s}| = |wy_{s}|$. By applying Lemma \ref{simple} we then conclude that $u \in \bar{y}y_{s}$ and $|u\bar{y}| \leq |xy|$. Since $xy$ is an edge of the triangle $\Delta_{2}$, the canonical equiradial points of this triangle on the edges $xy_{s}$ and $yy_{s}$ can be at most a distance $|xy| \leq t$ from the vertices $x$ and $y$ respectively. We thus conclude from the $4\delta$-tripod condition that $|uw| \leq 4\delta$. Combining these inequalities together gives
\begin{equation}\label{calculate}
|\bar{x}\bar{y}| \leq |\bar{x}w| + |wu| + |u\bar{y}| \leq |xy| + 8\delta,
\end{equation}
which proves \eqref{combined inequality}. The case $|xy_{s}| \geq |yy_{s}|$ is similar: we let $v \in xy_{s}$ be the point such that $|vy_{s}| = |\bar{y}y_{s}|$, apply Lemma \ref{simple} to obtain $|vw| \leq |xy|$ and $v \in wy_{s}$, then apply the $4\delta$-tripod condition to obtain $|v\bar{y}| \leq 4\delta$. This gives inequality \eqref{combined inequality} through the same calculation as \eqref{calculate}.
\end{proof}

\begin{rem}\label{sharper}
The proof of Lemma \ref{infinite triangle} shows that we have the sharper inequality $|\bar{x}\bar{y}| \leq |xy| + 8\delta$ when $|xy| > |x\bar{x}| = |y\bar{y}|$. 
\end{rem}

We will use Lemma \ref{infinite triangle} to show that the tripod map associated to a collection of calibrated equiradial points for a geodesic triangle $\Delta$ is roughly isometric. 

\begin{prop}\label{rough tripod}
Let $x,y,z \in X \cup \p X$ be given vertices of a geodesic triangle $\Delta$ in $X$. Let $\hat{x} \in yz$, $\hat{y} \in xz$, $\hat{z} \in xy$ be points such that $(\hat{x},\hat{y},\hat{z})$ is a calibrated ordered triple of $\chi$-equiradial points for $\Delta$ for a given $\chi \geq 0$. Let $T: \Delta \rightarrow \Upsilon$ be the tripod map associated to this triple. Then $T$ is $(6\chi+16\delta)$-roughly isometric. 

In particular if $(\hat{x},\hat{y},\hat{z})$ is the calibrated $60\delta$-equiradial triple produced in Lemma \ref{infinite equiradial} then $T$ is $400\delta$-roughly isometric.
\end{prop}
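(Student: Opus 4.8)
The plan is to show that the tripod map $T\colon\Delta\to\Upsilon$ associated to a calibrated $\chi$-equiradial triple $(\hat x,\hat y,\hat z)$ distorts distances by at most $6\chi+16\delta$, and then plug in $\chi=60\delta$ from Lemma~\ref{infinite equiradial} to get $6\cdot 60\delta+16\delta=376\delta\le 400\delta$. Recall that $T$ is isometric on each edge of $\Delta$ by construction, so the only thing to control is the distortion of distances between points lying on \emph{different} edges of $\Delta$. Fix $p$ on one edge and $q$ on another; by the symmetry of the three edges of $\Delta$ it suffices to handle one representative configuration, say $p\in xz$ and $q\in yz$ (the other two being $p\in xy$, $q\in xz$ and $p\in xy$, $q\in yz$, treated identically after relabeling). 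Write $T(p)$ and $T(q)$ in terms of arclength coordinates on $\Upsilon$: if $p$ lies at distance $s$ from the branch point of $T$ along its edge and $q$ at distance $t$, then $|T(p)T(q)|_\Upsilon$ equals $|s-t|$ if $p,q$ land on the same leg of $\Upsilon$ and $s+t$ otherwise.

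The heart of the argument is a comparison between $|pq|$ in $X$ and this tripod distance. First I would reduce the calibrated triple to the \emph{canonical} equiradial points of an auxiliary triangle. More precisely, on the edge $xz$ let $\hat z'$ be the point with $|x\hat z'|=|x\hat y|$ (using that $\hat y\in xz$ so $\hat z'=\hat y$ already), and symmetrically set up the points so that one may invoke the $4\delta$-tripod condition. The cleaner route: let $p\in xz$, $q\in yz$, and let $m=\hat y\in xz$, $n=\hat x\in yz$ be the equiradial points on those edges, which satisfy $|mn|\le\chi$ and, by calibration, $|mz|=|nz|$. Split into the case where both $p,q$ lie on the $z$-side of their equiradial points and the case where at least one lies on the far side. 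In the first case, apply Lemma~\ref{infinite triangle} to the configuration $p,q$ on the two geodesics $mz$ and $nz$ issuing toward $z$ (after first moving $p$ and $q$ to points equidistant from $z$, at the cost of $\chi$, as in the proof of Lemma~\ref{infinite equiradial}), obtaining that $|pq|$ is within $3\cdot(\text{small})+8\delta$ of the tripod prediction $s+t$, while the reverse inequality $|pq|\le s+t+(\text{const}\cdot\chi)$ is the easy triangle-inequality direction through the points $m,n$. In the second case, where $p$ (say) lies between $x$ and $\hat y$ on $xz$, the geodesic $xq$ together with edge subsegments forms a triangle to which the ordinary $4\delta$-tripod condition applies, and one argues as in the $z\in X$ portion of Lemma~\ref{infinite triangle}.

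Carefully tracking the additive errors: the move-to-equidistant step costs at most $\chi$ on each of the two points (total $2\chi$); the application of Lemma~\ref{infinite triangle} contributes its $3|xy|+8\delta$ with the relevant ``$|xy|$'' here being bounded by $\chi$ after the reductions, giving $3\chi+8\delta$; and the passage between the three equiradial points $\hat x,\hat y,\hat z$ and the single branch point costs another $\chi$. Summing and being slightly generous yields the bound $6\chi+16\delta$ for the rough-isometry constant of $T$. I expect the main obstacle to be the bookkeeping of the additive constants across the several sub-cases, together with being careful that when a vertex of $\Delta$ lies on $\partial X$ one genuinely uses calibration (so that $|\hat xz|=|\hat yz|$ makes sense or the relevant subsegments are infinite) rather than an illegal ``$|xz|$''; this is exactly the subtlety that Lemma~\ref{infinite triangle} was designed to absorb, so invoking it at the right configuration is the key move. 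Once $T$ is shown $(6\chi+16\delta)$-roughly isometric, the final sentence is immediate: substitute $\chi=60\delta$ to get $376\delta$, and round up to $400\delta$.
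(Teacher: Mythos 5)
Your instinct to run everything through calibration and Lemma \ref{infinite triangle} is exactly the paper's mechanism, but both of your cases, as written, have real problems. In your first case ($p\in \hat{y}z$, $q\in \hat{x}z$, both on the $z$-sides), the images $T(p)$ and $T(q)$ lie on the \emph{same} leg $L_{3}$ of $\Upsilon$, so by the coordinate rule you yourself state the target estimate is a comparison of $|pq|$ with $|s-t|$, not with $s+t$; the claim you announce there, that $|pq|$ is within $3\chi+8\delta$ of $s+t$, is false in general (in a tree, take $p$ and $q$ both close to a vertex $z\in X$ with $|\hat{y}z|=|\hat{x}z|$ large: then $|pq|$ is tiny while $s+t$ is huge). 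The auxiliary step ``move $p$ and $q$ to points equidistant from $z$ at the cost of $\chi$'' is also unjustified, since equalizing the two distances can cost $|s-t|$, which is unbounded. The correct one-transfer argument, which is what the paper does, is: use calibration $|\hat{y}z|=|\hat{x}z|$ to choose $u\in \hat{x}z$ with $|\hat{x}u|=|\hat{y}p|$, observe that $|uq|$ equals the tripod distance exactly (for every position of $q$ on $yz$, whether on the $z$-side or the $y$-side of $\hat{x}$), and bound $|up|\leq 3\chi+8\delta$ by Lemma \ref{infinite triangle}; then $\bigl||pq|-|uq|\bigr|\leq|up|$.

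The second case contains the more serious gap. You propose to draw a geodesic $xq$ and apply the ordinary $4\delta$-tripod condition, but the proposition assumes neither properness nor rough starlikeness, and $x$ may lie in $\p X$, so a geodesic from $q$ to $x$ need not exist; the entire point of the calibration hypothesis and of Lemma \ref{infinite triangle} is that one never draws any geodesic beyond the edges of $\Delta$ (the auxiliary geodesics are confined to the proof of Lemma \ref{infinite triangle}, where they join points of $X$ only). Moreover the configuration $p\in x\hat{y}$, $q\in y\hat{x}$ --- which after relabeling is precisely the paper's worst subcase $q\in\hat{z}y$ --- is not addressed by your sketch at all. It requires \emph{two} transfers: use $|\hat{y}x|=|\hat{z}x|$ to move $p$ to $p'\in x\hat{z}$ with $|\hat{z}p'|=|\hat{y}p|$, and $|\hat{x}y|=|\hat{z}y|$ to move $q$ to $q'\in\hat{z}y$ with $|\hat{z}q'|=|\hat{x}q|$; then $p'$ and $q'$ lie on the edge $xy$ on opposite sides of $\hat{z}$, so $|p'q'|=s+t$ is exactly the tripod distance, and each transfer costs $3\chi+8\delta$ by Lemma \ref{infinite triangle}, giving $\bigl||pq|-(s+t)\bigr|\leq 6\chi+16\delta$. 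This is where the constant in the statement actually comes from; your bookkeeping ($2\chi$ for repositioning, a single application of the lemma, plus $\chi$ at the branch point) does not correspond to a valid argument for this configuration. So the key lemma is correctly identified, but the case analysis and the estimates attached to it need to be redone along the lines above.
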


\begin{proof}
By symmetry (permuting the vertices $x$, $y$, and $z$), to estimate $|T(p)T(q)|$ for $p,q \in \Delta$ it suffices to restrict to the case $p \in \hat{y}z$ and then consider the possible locations of $q$. By construction we have $|T(p)T(q)| = |pq|$ if $p$ and $q$ belong to the same edge of $\Delta$, since the tripod map is isometric on the edges of $\Delta$. This handles the case that $q$ belongs to the same edge as $p$, i.e., that $q \in xz$. 

We next consider the case $q \in yz$. Since $|\hat{y}z| = |\hat{x}z|$, we can find a point $u \in \hat{x}z$ such that $|\hat{x}u| = |\hat{y}p|$. Then $|T(p)T(q)| = |uq|$. Applying Lemma \ref{infinite triangle} yields
\[
|up| \leq 3|\hat{y}\hat{x}| + 8\delta \leq 3\chi + 8\delta,
\]
so that 
\[
||uq|-|pq|| \leq |up| \leq 3\chi + 8\delta,
\]
which gives the desired estimate in this case. 

Lastly we must consider the case $q \in xy$, which we subdivide into the cases $q \in x\hat{z}$ and $q \in \hat{z}y$. When $q \in x\hat{z}$ we can use the condition $|x\hat{z}| = |x\hat{y}|$ to find a point $v \in x\hat{y}$ such that $|q\hat{z}| = |v\hat{y}|$. Then $|T(p)T(q)| = |vp|$. Similarly to the previous case, Lemma \ref{infinite triangle} gives us the estimate $|vq| \leq 3\chi + 8\delta$ which implies that
\[
||vp|-|pq|| \leq |vq| \leq 3\chi + 8\delta,
\]
as desired. When $q \in \hat{z}y$ we use the equality $|\hat{z}y| = |\hat{x}y|$ to find $w \in \hat{x}y$ such that $|\hat{z}q| = |\hat{x}w|$, and we use the equality $|\hat{x}z| = |\hat{y}z|$ to find $s \in \hat{x}z$ such that $|s\hat{x}| = |p\hat{y}|$. Then $|T(p)T(q)| = |sw|$. Lemma \ref{infinite triangle} gives us the estimate
\[
\max\{|sp|,|wq|\} \leq 3\chi + 8\delta,
\]
which implies by the triangle inequality that
\[
||sw|-|pq|| \leq ||sw|-|wp|| + ||wp|-|qp|| \leq |sp| + |wq| \leq 6\chi + 16\delta. 
\]
This completes the proof of the main claim. The final assertion follows by substituting $\chi = 60\delta$ and rounding up. 
\end{proof}

\begin{rem}\label{shorthand}
Throughout this paper we will often suppress the exact choice of calibrated equiradial points used to define a tripod map $T: \Delta \rightarrow \Upsilon$. To make this more formal, for a geodesic triangle $\Delta$ in $X$ we will refer to a tripod map $T: \Delta \rightarrow \Upsilon$ associated to $\Delta$  as being any tripod map $T$ for $\Delta$ associated to an ordered triple $(\hat{x},\hat{y},\hat{z})$ of calibrated $60\delta$-equiradial points for $\Delta$ obtained from Lemma \ref{infinite equiradial}. We will also abuse terminology and say that $\hat{x}$, $\hat{y}$ and $\hat{z}$ are equiradial points for the tripod map $T$ (as opposed to for the triangle $\Delta$). 
\end{rem}

\subsection{Calculating Busemann functions} We recall that the Gromov boundary $\p \Upsilon$ of the tripod $\Upsilon$ is a disjoint union of three points $\zeta_{i}$, $i=1,2,3$, corresponding to the geodesic rays $\gamma_{i}:[0,\infty) \rightarrow \Upsilon$ that parametrize the half-lines $L_{i}$ starting from $o$ for $i =1,2,3$. Set $b_{\Upsilon}:=b_{\gamma_{1}}$ to be the Busemann function associated to the geodesic ray $\gamma_{1}$. A straightforward calculation shows that $b_{\Upsilon}$ is given by $b_{\Upsilon}(s) = -s$ for $s \in L_{1}$ and $b_{\Upsilon}(s) = s$ for $s \in L_{2}$ or $s \in L_{3}$, when we consider each of these rays as identified with $[0,\infty)$. 

In this next proposition we consider a geodesic triangle $\Delta$ in $X$ with a distinguished vertex $\omega \in \p X$ together with a Busemann function $b$ based at $\omega$. We will not keep track of exact constants in the proof of this lemma so we will not produce an explicit value for $\kappa = \kappa(\delta)$ below. If one does careful bookkeeping in the proof it is possible to show that $\kappa = 2000\delta$ works. 

\begin{prop}\label{compute Busemann}
Let $\Delta = \omega xy$ be a geodesic triangle in $X$ with $\omega \in \p X$ and $x,y\in X \cup \p_{\omega} X$. There is a constant $\kappa = \kappa(\delta)$ such that the following holds: let $\hat{\omega} \in xy$, $\hat{x} \in \omega y$, and $\hat{y} \in \omega x$ be a calibrated set of $60\delta$-equiradial points on $\Delta$ provided by Lemma \ref{infinite equiradial} and let $b$ be a Busemann function based at $\omega$. Let $T: \Delta \rightarrow \Upsilon$ be the tripod map associated to the triple $(\hat{\omega},\hat{x},\hat{y})$. Then for each $p \in \Delta$ we have
\begin{equation}\label{tripod image}
b(p) \doteq_{\kappa} b_{\Upsilon}(T(p)) + (x|y)_{b}. 
\end{equation}
Consequently we have $b(p) \doteq_{\kappa} (x|y)_{b}$ for $p \in \{\hat{\omega},\hat{x},\hat{y}\}$ and 
\begin{equation}\label{tripod minimum}
(x|y)_{b} \doteq_{\kappa} \inf_{p \in xy} b(p). 
\end{equation}
\end{prop}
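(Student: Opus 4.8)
The plan is to transport the computation to the tripod $\Upsilon$, where Busemann functions are completely explicit, and then pull back via the rough isometry $T$. First I would record the behavior of $T$ on the distinguished equiradial points: by construction $T(\hat\omega) = T(\hat x) = T(\hat y) = o$, and $b_\Upsilon(o) = 0$. So if I can establish \eqref{tripod image} for all $p \in \Delta$, then specializing to $p \in \{\hat\omega,\hat x,\hat y\}$ immediately gives $b(p) \doteq_\kappa (x|y)_b$, and \eqref{tripod minimum} will follow because along $xy$ the function $b_\Upsilon(T(\cdot))$ ranges over $[0,\infty)$ in each of the two directions away from $T(\hat\omega) = o$, so its infimum is $0$, attained (roughly) at $\hat\omega \in xy$; combined with the uniform estimate \eqref{tripod image} this pins $\inf_{p\in xy} b(p)$ to within $\kappa$ of $(x|y)_b$. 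Thus the whole proposition reduces to \eqref{tripod image}.

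To prove \eqref{tripod image}, I would fix a geodesic ray $\sigma \in \omega$ emanating from a convenient point and work with $b = b_\sigma$ up to the additive constant allowed by Lemma \ref{identify busemann} (all Busemann functions based at $\omega$ agree up to a bounded additive error, which only inflates $\kappa$). The key identity on the model side is that $b_\Upsilon(s) = -s$ on $L_1$ and $b_\Upsilon(s) = +s$ on $L_2 \cup L_3$. The side $\omega x$ of $\Delta$ maps isometrically onto $L_1 \cup L_2$ with $T(\omega) = \zeta_1$ and $T(\hat y) = o$; parametrize $\omega x$ by arclength as a geodesic $\mu:(-\infty,a]\to X$ with $\mu \to \omega$ as $t\to-\infty$ (reparametrizing so that $\mu(0) = \hat y$). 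Then on this side $b_\Upsilon(T(\mu(t))) = t$ for all admissible $t$, while Lemma \ref{geodesic busemann}(1) gives $b(\mu(t)) - b(\hat y) \doteq_{144\delta} t$, i.e. $b(\mu(t)) \doteq_{144\delta} b_\Upsilon(T(\mu(t))) + b(\hat y)$. The same argument applied to the side $\omega y$ gives $b(p) \doteq_{144\delta} b_\Upsilon(T(p)) + b(\hat x)$ for $p \in \omega y$. It remains to handle $p$ on the side $xy$ and to identify the additive constants $b(\hat x), b(\hat y)$ with $(x|y)_b$.

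For the additive constants: $\hat x \in \omega y$ and $\hat y \in \omega x$ are among the $60\delta$-equiradial points, so $|\hat x \hat y| \le 60\delta$, and since $b$ is $1$-Lipschitz, $b(\hat x) \doteq_{60\delta} b(\hat y)$; call this common value (up to $60\delta$) the constant $\beta$. To see $\beta \doteq (x|y)_b$, I would use Proposition \ref{rough tripod}: $T$ is $400\delta$-roughly isometric, hence $|xz|$-type distances transport to $\Upsilon$ up to additive error. On $\Upsilon$ the Gromov product of $\zeta_2,\zeta_3$ (the images of $x,y$, at least when both lie in $\partial X$) based at a point of $L_1$ at distance $r$ from $o$ is exactly $r$; more robustly, for finite vertices $|T(x)T(o)| + |T(y)T(o)| - |T(x)T(y)| = 0$ since $o$ lies on the $L_1$–$L_2$ geodesic and on the $L_1$–$L_3$... — the cleaner route is: $(x|y)_b = \tfrac12(b(x)+b(y)-|xy|)$ extended to the boundary as an $\inf\liminf$, and plugging in points $\mu(t_n) \to x$ along $\omega x$ and $\nu(s_n)\to y$ along $\omega y$ and using the two displayed rough identities above together with $b_\Upsilon(T(\mu(t_n))) = t_n$, $b_\Upsilon(T(\nu(s_n))) = s_n$ and $|\mu(t_n)\nu(s_n)| \doteq_{400\delta} t_n + s_n$ (the distance in $\Upsilon$ between points at heights $t_n, s_n$ on $L_2$ and $L_3$), one gets $(x|y)_b \doteq_{c(\delta)} \beta$. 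Finally, for $p \in xy$: the side $xy$ maps isometrically onto $L_2 \cup L_3$ with $T(\hat\omega) = o$, so $b_\Upsilon(T(p))$ equals the $\Upsilon$-distance from $T(p)$ to $o$, which by rough isometry of $T$ is $\doteq_{400\delta} |p\,\hat\omega|$; parametrizing $xy$ through $\hat\omega$ and using that $b$ is $1$-Lipschitz gives $|b(p) - b(\hat\omega)| \le |p\,\hat\omega|$, and a triangle-comparison (the $4\delta$-thinness, already packaged in Lemma \ref{infinite thin} and Proposition \ref{rough tripod}) upgrades this to the matching rough \emph{equality} $b(p) \doteq_{c(\delta)} b_\Upsilon(T(p)) + b(\hat\omega) \doteq_{c(\delta)} b_\Upsilon(T(p)) + (x|y)_b$. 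Collecting the worst additive constant and rounding up yields \eqref{tripod image} with $\kappa = \kappa(\delta)$, and the two consequences follow as described.

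The main obstacle I anticipate is the last point — getting a two-sided rough \emph{equality} $b(p) \doteq_\kappa b_\Upsilon(T(p)) + (x|y)_b$ for $p$ on the side $xy$, rather than just the easy one-sided Lipschitz bound. The issue is that $b$ need not decrease monotonically as one moves along $xy$ away from $\hat\omega$ toward $x$ or $y$; a priori it could oscillate. The resolution is that moving along $xy$ from $\hat\omega$ toward $x$ fellow-travels (within $400\delta$, by the rough isometry of $T$ onto $\Upsilon$ and $60\delta$-equiradiality) the side $\omega x$ traversed from $\hat y$ toward $x$, i.e. in the direction of increasing $b$ at unit rate by Lemma \ref{geodesic busemann}(1); so $b$ does increase at roughly unit rate along $xy$ in both directions away from $\hat\omega$, which is exactly what $b_\Upsilon(T(\cdot))$ does. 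Making this fellow-traveling precise — comparing a point of $xy$ at $\Upsilon$-height $h$ to the corresponding point of $\omega x$ at the same $\Upsilon$-height and invoking Proposition \ref{rough tripod} to bound their $X$-distance by $\doteq_{800\delta}$ — is the crux of the argument, and once it is in hand everything else is bookkeeping of additive constants.
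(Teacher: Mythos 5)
Your proposal is correct and follows essentially the same route as the paper: evaluate $b$ along the infinite sides via Lemma \ref{geodesic busemann}, transfer the estimate to the side $xy$ by pairing each $p \in xy$ with the point of $\omega x$ (or $\omega y$) at the same tripod height using calibration (the paper invokes Lemma \ref{infinite triangle} directly where you invoke Proposition \ref{rough tripod}, which packages the same comparison), and then identify the additive constant with $(x|y)_{b}$ by a sequence computation. The only minor difference is in that last step: you approximate $(x|y)_{b}$ by Gromov products of points taken along $\omega x$ and $\omega y$, using the rough isometry of $T$ to estimate their mutual distance, whereas the paper takes the approximating sequences inside the side $xy$ so that the distance term is exact -- both versions are fine.
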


\begin{proof}
Since we will not be keeping track of the exact value of the final constant $\kappa = \kappa(\delta)$ in the proof, we will let $\doteq$ denote any equality up to an additive error depending only on $\delta$. Set $u = b(\hat{\omega})$. Then $u \doteq b(\hat{x})$ and $u \doteq b(\hat{y})$ since $b$ is 1-Lipschitz. We will prove the rough equality \eqref{tripod image} with $u$ in place of $(x|y)_{b}$ and use this to deduce that $u \doteq (x|y)_{b}$. Thus we will first show that for $p \in \Delta$ we have
\begin{equation}\label{modified tripod}
b(p) \doteq b_{\Upsilon}(T(p)) + u. 
\end{equation}

We first handle the case in which $p \in \omega x$ or $p \in \omega y$. Since the roles of $x$ and $y$ are symmetric, we can assume without loss of generality that $p \in \omega x$. Let $\gamma: (-\infty,a] \rightarrow X$ be an arclength parametrization of $\omega x$ with $\gamma(0) = \hat{x}$ and $\gamma(t) \rightarrow \omega$ as $t \rightarrow -\infty$. If we define $s \in (-\infty,a]$ such that $\gamma(s) = p$ then it follows from the construction of the tripod map that $b_{\Upsilon}(T(\gamma(s)) = s$. Applying Lemma \ref{geodesic busemann} gives
\[
b(p) - b(\hat{x}) \doteq s = b_{\Upsilon}(T(\gamma(s)),
\]
which gives \eqref{modified tripod} since $b(\hat{x}) \doteq u$. 

The remaining case is when $p \in xy$. By the symmetric roles of $x$ and $y$ we can assume that $p \in x\hat{\omega}$. As in the proof of Proposition \ref{rough tripod}, since $|x\hat{\omega}| = |x\hat{y}|$ we can find $q \in x\hat{y}$ such that $|p\hat{\omega}| = |q\hat{y}|$. Then by Lemma \ref{infinite triangle} we have
\[
|pq| \leq 3|\hat{y}\hat{\omega}| + 10\delta \leq 190\delta. 
\]
Thus $b(p) \doteq b(q)$ since $b$ is 1-Lipschitz. It then follows, from the rough equality \eqref{modified tripod} for $q \in \omega x$ that we established above, that 
\[
b(p) \doteq b(q) \doteq |q\hat{y}| + u = |p\hat{\omega}| + u,
\]
which gives \eqref{modified tripod} in this case.  

We next show that $u \doteq (x|y)_{b}$. By Lemma \ref{busemann inequality} we have for any sequences $x_{n} \rightarrow x$ and $y_{n} \rightarrow y$ that $(x_{n}|y_{n})_{b} \doteq_{600\delta} (x|y)_{b}$ for sufficiently large $n$; if $x \in X$ then we can just set $x_{n} = x$ for all $n$ and the same goes for $y$. We choose sequences $\{x_{n}\}$ and $\{y_{n}\}$ that belong to $xy$ and consider only those $n$ large enough that $(x_{n}|y_{n})_{b} \doteq_{600\delta} (x|y)_{b}$ and $x_{n} \in \hat{\omega} x$, $y_{n} \in \hat{\omega} y$. Then applying \eqref{modified tripod}, 
\begin{align*}
(x|y)_{b}&\doteq (x_{n}|y_{n})_{b} \\
&= \frac{1}{2}(b(x_{n}) + b(y_{n}) - |x_{n}y_{n}|) \\
&\doteq  \frac{1}{2}(|x_{n}\hat{\omega}|+ |y_{n}\hat{\omega}| + 2u - |x_{n}y_{n}|) \\
&= u. 
\end{align*}
Thus we can substitute in $(x|y)_{b}$ for $u$ in \eqref{modified tripod} at the cost of an additional additive constant depending only on $\delta$. The main claim \eqref{tripod image} follows. The assertion that $b(p) \doteq_{\kappa} (x|y)_{b}$ for $p \in \{\hat{\omega},\hat{x},\hat{y}\}$ follows from \eqref{tripod image} since each point of $\{\hat{\omega},\hat{x},\hat{y}\}$ has image $o \in \Upsilon$ under the tripod map $T$ and $b_{\Upsilon}(o) = 0$. The rough equality \eqref{tripod minimum} also follows directly from \eqref{tripod image} since the image of $xy$ under $T$ is contained in $L_{2} \cup L_{3}$ and $b_{\Upsilon}$ is nonnegative on this subset of $\Upsilon$.
\end{proof}

Proposition \ref{compute Busemann} leads to the following important definition, which is useful for calculations. We recall our convention that $\p_{\omega} X = \p X$ when $\omega \in X$. 

\begin{defn}\label{adapted busemann}
Let $X$ be a geodesic $\delta$-hyperbolic space and let $b \in \hat{\mathcal{B}}(X)$ be given with basepoint $\omega$. Let $x,y \in X \cup \p_{\omega} X$ and let $c \geq 0$ be a given constant. Suppose that $xy$ is a geodesic joining $x$ to $y$. We say that a parametrization $\gamma:I \rightarrow X$, $I \subseteq \R$, of $xy$ by arclength is \emph{$c$-adapted to $b$} if $0 \in I$ and
\begin{equation}\label{adapted equation}
b(\gamma(t)) \doteq_{c} |t| + (x|y)_{b}, 
\end{equation}
for $t \in I$.  
\end{defn}

The inclusion of $0$ in the domain of $\gamma$ will be vital for our applications. When the value of $c$ is implied by context we will often shorten the terminology to just saying that the parametrization $\gamma$ is adapted to $b$. 

For $b \in \hat{\mathcal{B}}(X)$ with basepoint $\omega$ we will construct adapted parametrizations for geodesics joining any two points $x,y \in X \cup \p_{\omega} X$ under an assumption similar to the rough starlikeness hypothesis of Theorem \ref{unbounded uniformization}. We emphasize that the points $x$ and $y$ in the lemma need not always be the vertices of a geodesic triangle $\Delta$ with a third vertex at $\omega$. 

\begin{lem}\label{star parametrize}
Let $X$ be a geodesic $\delta$-hyperbolic space and let $b \in \hat{\mathcal{B}}(X)$ be given with basepoint $\omega$. Let $x,y \in X \cup \p_{\omega} X$ and let $xy$ be a given geodesic from $x$ to $y$. Suppose that we are given $K \geq 0$ and points $x',y' \in X \cup \p_{\omega}X$ and geodesics $\omega x',\omega y'$ joining $\omega$ to $x'$ and $y'$ respectively such that $\max\{|xx'|,|yy'|\} \leq K$. Then there is a constant $c = c(\delta,K)$ depending only on $\delta$ and $K$ such that there is a parametrization $\gamma: I \rightarrow X$ of $xy$ that is $c$-adapted to $b$.
\end{lem}

\begin{proof}
We first consider the case that $x = x'$ and $y = y'$, so that we can take $K = 0$. Let $\Delta = \omega x y$ be the geodesic triangle formed by the geodesics $\omega x$, $\omega y$, and $xy$. Let $T: \Delta \rightarrow \Upsilon$ be a $400\delta$-roughly isometric tripod map associated to $\Delta$ such that $T(\omega) \in L_{1} \cup\{\zeta_{1}\}$, $T(x) \in L_{2} \cup \{\zeta_{2}\}$, and $T(y) \in L_{3} \cup \{\zeta_{3}\}$, as given by Proposition \ref{rough tripod}. We identify the union $L_{2} \cup L_{3}$ of geodesic rays in $\Upsilon$ with $\R$ by identifying $L_{2}$ with $(-\infty,0]$ and $L_{3}$ with $[0,\infty)$, sending the core $o$ of $\Upsilon$ to the origin in $\R$. We let $I = T(xy) \subset L_{2} \cup L_{3}$ denote the image of $xy$ under $T$ and consider $I$ as a subinterval $I \subset \R$ under the identification of $L_{2} \cup L_{3}$ with $\R$. Since the tripod map $T$ is isometric when restricted to $xy$, we can then construct an arclength parametrization $\gamma: I \rightarrow X$ of $xy$ by inverting the restriction of $T$ to $xy$. By the construction of $T$ we have $0 \in I$ since the core $o$ is contained in the image $T(xy)$ of $xy$. 

When $b \in \mathcal{B}(X)$ the rough equality \eqref{tripod image} directly implies the $c$-adapted condition \eqref{adapted busemann} for $\gamma$ with $c = c(\delta)$, since for $z \in L_{2} \cup L_{3}$ we have $b_{\Upsilon}(z) = |zo|$. For $b \in \mathcal{D}(X)$ it is easy to see that it suffices to verify \eqref{adapted equation} for $b$ of the form $b(x) = |x\omega|$, $\omega \in X$. We then have to show that there is a constant $c = c(\delta)$ such that for $t \in I$ we have
\begin{equation}\label{distance adapated}
d(\gamma(t),\omega) \doteq_{c} |t| + (x|y)_{\omega}. 
\end{equation}
By \eqref{sequence approximation} we can find points $\bar{x} \in \omega x$, $\bar{y} \in \omega y$ such that $(\bar{x}|\bar{y})_{\omega} \doteq_{c(\delta)} (x|y)_{\omega}$ and $T(\bar{x}) \in L_{2}$, $T(\bar{y}) \in L_{3}$. Then $(T(\bar{x})|T(\bar{y}))_{T(\omega)} \doteq_{c(\delta)} (\bar{x}|\bar{y})_{\omega}$ since $T$ is $c(\delta)$-roughly isometric. Since $T(\bar{y}) \in L_{2}$ and $T(\bar{z}) \in L_{3}$, a quick calculation then shows that 
\[
(T(\bar{x})|T(\bar{y}))_{\omega} = |T(\omega)o|,
\] 
and therefore $(x|y)_{\omega} \doteq_{c(\delta)} |T(\omega)o|$. Thus for $t \in I$ we have
\[
|t| + (x|y)_{\omega} \doteq_{c(\delta)} |t| + |T(\omega)o| = |T(\omega)T(\gamma(t))|, 
\]
with the second equality following from the construction of $\gamma$. The rough equality \eqref{distance adapated} with $c = c(\delta)$ then directly follows from the fact that $T$ is $c(\delta)$-roughly isometric. 

We now consider the general case in which we are given points $x',y' \in X \cup \p_{\omega}X$ and $K \geq 0$ such that $\max\{|xx'|,|yy'|\} \leq K$. If $x$ and $y$ both belong to $\p_{\omega} X$ then our conventions imply that $x = x'$ and $y = y'$, hence this case reduces to the case $K = 0$ considered previously.  

If $x$ and $y$ both belong to $X$ then we apply the $K = 0$ case to the points $x'$ and $y'$ to obtain a $c'$-adapted parametrization $\sigma: I' \rightarrow X$ of $x'y'$ oriented from $x'$ to $y'$, $I' = [t_{-}',t_{+}']$ with $c' = c'(\delta)$. Since $0 \in I'$ we have $t_{-}' \leq 0$ and $t_{+}' \geq 0$. Let $\eta: I \rightarrow X$, $I = [t_{-}',t_{+}]$, be the unique arclength parametrization of $xy$ that is oriented from $x$ to $y$ and starts from the same time parameter $t_{-}'$ as $\sigma$. The piecewise geodesic curve $xx' \cup x'y' \cup yy'$ joining $x$ to $y$ can be parametrized as a $4K$-roughly isometric map $\beta: J \rightarrow X$ for an appropriate interval $J \subset \R$. By the stability of geodesics in Gromov hyperbolic spaces \cite[Theorem 1.3.2]{BS07} this implies that there is a constant $c_{0} = c_{0}(\delta,K)$ such that the given geodesic $xy$ is contained in a $c_{0}$-neighborhood of the curve $\beta$. Since the segments $xx'$ and $yy'$ of $\beta$ are each contained in a $K$-neighborhood of $\sigma$, by increasing $c_{0}$ by an amount depending only on $K$ we can assume that $xy$ is contained in a $c_{0}$-neighborhood of $\sigma$.

Now let $t \in I$ be given and let $s \in I'$ be such that $|\eta(t)\sigma(s)| \leq c_{0}$. Since $b$ is 1-Lipschitz it follows that 
\[
b(\eta(t)) \doteq_{c_{0}} b(\sigma(s)) \doteq_{c'} |s|+(x|y)_{b}. 
\] 
Thus it suffices to show that $t \doteq_{c''} s$ for a constant $c'' = c''(\delta,K)$. Since $t-t_{-}' = |\eta(t)x|$ and $s-t_{-}' = |\sigma(s)x'|$, we have
\begin{align*}
|t-s| &= |(t-t_{-}')-(s-t_{-}')| \\
&= ||\eta(t)x|-|\sigma(s)x'|| \\
&\leq ||\eta(t)x|-|\eta(t)x'|| + ||\eta(t)x'|-|\sigma(s)x'|| \\
&\leq |xx'| + |\eta(t)\sigma(s)| \\
&\leq K+ c_{0}, 
\end{align*}
so that we can set $c'' = K+c_{0}$. It follows that $\eta$ satisfies \eqref{adapted equation} with constant $c = c(\delta,K)$ depending only on $\delta$ and $K$.

If $0 \in I$ then $\eta$ gives a parametrization of $xy$ that is $c$-adapted to $b$ and we are done. We can therefore assume that $0 \notin I$ which implies that $t_{+} < 0$ since $t_{-}' \leq 0$. We then note that $|x'y'| \doteq_{2K} |xy|$ and $t_{+}'-t_{-}' = |x'y'|$, $t_{+}-t_{-}' = |xy|$, which implies that $t_{+} \doteq_{2K} t_{+}'$. Since $t_{+}' \geq 0$ and $t_{+} \leq 0$, we conclude that $|t_{+}| \leq 2K$. We set $I'' = [t_{-}'-t_{+},0]$ and set $\gamma(t) = \eta(t+t_{+})$ for $t \in I''$. Then $0 \in I''$ by construction and this arclength parametrization $\gamma$ of $xy$ still satisfies \eqref{adapted equation} with $c=  c(\delta,K)$ since $b$ is $1$-Lipschitz and $|t_{+}| \leq 2K$. Thus $\gamma$ gives the desired adapted parametrization. 

Lastly we consider the case in which one of $x$ or $y$ belong to $\p_{\omega} X$, but not both. Without loss of generality we can assume that $x \in X$ and $y \in \p_{\omega} X$. Let $\{y_{n}\} \subset xy$ be the sequence of points with $|xy_{n}| = n$ for each $n \in \N$. Let $\eta_{n}: I_{n} \rightarrow X$ be the arclength parametrizations of $xy_{n}$ for each $n$ that were constructed in the previous case, $I_{n} = [s_{n},t_{n}]$. Since $0 \in I_{n}$ for each $n$ we have $s_{n} \leq 0$ for each $n$. Since $\eta_{n}(s_{n}) = x$ for each $n$, we have from the condition that $\eta_{n}$ is $c$-adapted to $b$,
\[
b(x) \doteq_{c} |s_{n}|+(x|y)_{b} = -s_{n} + (x|y)_{b} 
\]
with $c = c(\delta,K)$. It follows that $s_{m} \doteq_{c} s_{n}$ for each $m, n \in \N$.  Thus, by replacing $\eta_{n}$ with the parametrization $\gamma_{n}$ defined by $\gamma_{n}(t) = \eta_{n}(t-s_{1}+s_{n})$ on the domain $I_{n}' = [s_{1},t_{n}+s_{1}-s_{n}]$, we can assume that $s_{n} = s_{1}:= s$ for all $n \in \N$. Note also that, since $t_{n} \rightarrow \infty$ as $n \rightarrow \infty$ and $s \leq 0$, we have $0 \in I_{n}$ for all large enough $n$. It follows that the resulting parametrization $\gamma_{n}$ will be $c$-adapted to $b$ for $n$ large enough that $t_{n} \geq 0$ since $b$ is 1-Lipschitz, with $c = c(\delta,K)$. 

With these modifications the parametrizations $\gamma_{n}$ now have the same starting point $s \leq 0$. Since these are parametrizations of $xy_{n}$ by arclength and the sequence $\{y_{n}\}$ defines progressively longer subsegments $xy_{n}$ of $xy$ that exhaust $xy$, the maps $\gamma_{n}$ coincide wherever their domains overlap and can therefore be used to define a parametrization $\gamma: [s,\infty) \rightarrow X$ of $xy$ that is $c$-adapted to $b$ by construction.  
\end{proof}

\section{Uniformization}\label{sec:uniformize}

Our task in this section will be to prove Theorems \ref{unbounded uniformization}, \ref{identification theorem}, and \ref{CAT theorem}. Section \ref{subsec:estimate uniform} establishes some general estimates for the uniformized distances $d_{\e,b}$. Section \ref{subsec:busemann uniformize} proves the theorems in the case that $b \in \mathcal{B}(X)$ (i.e., $b$ is a Busemann function). Section \ref{subsec:distance uniformize} then uses a special construction (Definition \ref{ray augment}) to deduce the case $b \in \mathcal{D}(X)$ from the case $b \in \mathcal{B}(X)$. Since Theorem \ref{CAT theorem} follows from Theorems \ref{unbounded uniformization} and \ref{identification theorem} once we've shown that $\rho_{1,b}$ is a GH-density in this case in  Proposition \ref{strong hyperbolic admissible}, we will focus our efforts on proving Theorems \ref{unbounded uniformization} and \ref{identification theorem} after that point. 

\subsection{Estimates for the uniformized distance}\label{subsec:estimate uniform} In this section we will derive some estimates for the conformal deformation  $(X_{\e,b},d_{\e,b})$ of a geodesic $\delta$-hyperbolic space $X$ by the density $\rho_{\e,b}(x) = e^{-\e b(x)}$ for a given $\e > 0$ and $b \in \hat{\mathcal{B}}(X)$ using the tripod maps we built in Section \ref{sec:tripod}. For now we will not be assuming that $\rho_{\e,b}$ is a GH-density (using the terminology of Definition \ref{conformal factor}). Hence we can use these results to establish that $\rho_{\e,b}$ is a GH-density in certain important cases. To simplify notation we will drop the function $b$ from the notation for objects associated to the conformal deformation and write $\rho_{\e}:=\rho_{\e,b}$, $X_{\e} :=X_{\e,b}$, etc. For a curve $\gamma: I \rightarrow X_{\e}$ we will write $\l_{\e}(\gamma):=\l_{\rho_{\e}}(\gamma)$ for its length measured in the metric $d_{\e}$. We let $\l(\gamma)$ denote the length of $\gamma$ measured in $X$ instead. 

\begin{rem}\label{appendix interlude} Throughout the rest of this paper we will be using \cite[Proposition A.7]{BHK}, which for a geodesic metric space $X$ and a continuous function $\rho:X \rightarrow (0,\infty)$  allows us to compute the lengths $\l_{\rho}(\gamma)$ in the conformal deformation $X_{\rho}$ of curves $\gamma: I \rightarrow X$ parametrized by arclength in $X$ as
\begin{equation}\label{appendix prop}
\l_{\rho}(\gamma) = \int_{I}\rho \circ \gamma \, ds,
\end{equation}
with $ds$ denoting the standard length element in $\R$.
\end{rem}

Since $b$ is $1$-Lipschitz we have the \emph{Harnack type inequality} for $x,y \in X$,
\begin{equation}\label{Harnack}
e^{-\e|xy|} \leq \frac{\rho_{\e}(x)}{\rho_{\e}(y)} \leq e^{\e|xy|},
\end{equation}
which made its first appearance in the statement of Theorem \ref{Gehring-Hayman} earlier.  

The metric spaces $X_{\e}$ and $X$ are biLipschitz on bounded subsets of $X$ by inequality \eqref{Harnack}. A more precise estimate for this is given in the lemma below.

\begin{lem}\label{arc Harnack}
For any $x,y \in X$ we have
\[
\rho_{\e}(x)\e^{-1}(1-e^{-\e |xy|}) \leq d_{\e}(x,y) \leq \rho_{\e}(x)\e^{-1}(e^{\e |xy|}-1).
\]
\end{lem}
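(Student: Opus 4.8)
The plan is to bound the length in $X_\e$ of a well-chosen curve from $x$ to $y$ from above, and to bound $d_\e(x,y)$ from below using admissibility along a geodesic $xy$. For the upper bound, parametrize a geodesic $xy$ in $X$ by arclength as $\eta:[0,|xy|]\to X$ with $\eta(0)=x$. Using \eqref{appendix prop} and the Harnack inequality \eqref{Harnack} in the form $\rho_\e(\eta(t)) \le \rho_\e(x)e^{\e t}$, I would estimate
\[
d_\e(x,y) \le \l_\e(\eta) = \int_0^{|xy|}\rho_\e(\eta(t))\,dt \le \rho_\e(x)\int_0^{|xy|}e^{\e t}\,dt = \rho_\e(x)\e^{-1}(e^{\e|xy|}-1),
\]
which is exactly the claimed upper bound; note this direction uses neither admissibility nor hyperbolicity beyond $X$ being geodesic and $b$ being $1$-Lipschitz.

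For the lower bound, let $\sigma$ be any curve in $X_\e$ joining $x$ to $y$, and let $\sigma'$ denote a geodesic $xy$ in $X$. Applying the admissibility inequality \eqref{first GH} to $\sigma'$, and then using $\l_\e(\sigma) \ge d_\e(x,y)$ together with the definition of $d_\e$ as an infimum, it suffices to bound $\l_\e(\sigma')$ from below. Parametrizing $\sigma'=\eta$ by arclength as above and using the lower Harnack bound $\rho_\e(\eta(t)) \ge \rho_\e(x)e^{-\e t}$,
\[
\l_\e(\eta) = \int_0^{|xy|}\rho_\e(\eta(t))\,dt \ge \rho_\e(x)\int_0^{|xy|}e^{-\e t}\,dt = \rho_\e(x)\e^{-1}(1-e^{-\e|xy|}),
\]
and then the admissibility inequality gives $d_\e(x,y) = d_\e(x,y)$ bounded below by $M^{-1}\l_\e(\eta) \ge M^{-1}\rho_\e(x)\e^{-1}(1-e^{-\e|xy|})$, as desired. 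One small point to be careful about: admissibility \eqref{first GH} reads $\l_\rho(\gamma)\le M d_\rho(x,y)$ for a geodesic $\gamma$, so the chain is $d_\e(x,y) \ge M^{-1}\l_\e(\sigma')$ for the chosen geodesic $\sigma'$, and we then bound $\l_\e(\sigma')$ below by the integral computation.

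I do not expect a genuine obstacle here: both bounds are direct consequences of the Harnack inequality \eqref{Harnack} applied pointwise along a geodesic, combined (for the lower bound only) with the admissibility inequality \eqref{first GH}. The only thing requiring a little care is making sure the integral is taken over a curve parametrized by arclength in $X$ so that \eqref{appendix prop} applies, and that the Harnack comparison is anchored at the endpoint $x$ rather than $y$ (anchoring at $x$ is what produces $\rho_\e(x)$ as the prefactor in both inequalities). This is why the statement is not symmetric in $x$ and $y$.
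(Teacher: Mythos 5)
Your proof is correct and follows essentially the same route as the paper: the upper bound integrates the Harnack bound $\rho_{\e}(\eta(t))\leq\rho_{\e}(x)e^{\e t}$ along a geodesic anchored at $x$, and the lower bound combines the admissibility inequality \eqref{first GH} applied to that geodesic with the reverse Harnack bound. The brief detour in your lower-bound paragraph through an arbitrary curve $\sigma$ and the infimum is unnecessary (the chain $d_{\e}(x,y)\geq M^{-1}\l_{\e}(xy)$ goes directly through the geodesic), but your closing clarification states the logic exactly as the paper does.
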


\begin{proof}
For the upper bound we let $xy$ be a geodesic joining $x$ to $y$. Then, using \eqref{Harnack}, 
\begin{align*}
d_{\e}(x,y) &\leq \int_{xy} \rho_{e}\, ds \\
&\leq \rho_{\e}(x)\int_{0}^{|xy|} e^{\e t}\, dt \\
&= \rho_{\e}(x)\e^{-1}(e^{\e |xy|}-1).
\end{align*}
For the lower bound we consider a rectifiable curve $\gamma$ joining $x$ to $y$ in $X$, which we can assume is parametrized by arclength as $\gamma:[0,\l(\gamma)] \rightarrow X$ with $\l(\gamma)$ denoting the length of $\gamma$ in $X$.  With this parametrization $\gamma$ defines a $1$-Lipschitz function from $[0,\l(\gamma)]$ to $X$, so that in particular we have $|x\gamma(t)| \leq t$ for each $t \in [0,\l(\gamma)]$. Then by \eqref{Harnack},
\begin{align*}
\l_{\e}(\gamma) &\geq \rho_{\e}(x)\int_{0}^{\l(\gamma)} e^{-\e |x\gamma(t)|}\, dt \\
&\geq \rho_{\e}(x)\int_{0}^{\l(\gamma)} e^{-\e t}\, dt \\
&= \rho_{\e}(x)\e^{-1}(1-e^{-\e \l(\gamma)}) \\
&\geq \rho_{\e}(x)\e^{-1}(1-e^{-\e |xy|}),
\end{align*}
where in the final line we used that $\l(\gamma) \geq |xy|$. 
\end{proof}

Lemma \ref{arc Harnack} can be rewritten in the following useful form when $|xy| \leq 1$. 

\begin{lem}\label{rephrased arc Harnack}
For any $x,y \in X$ with $|xy| \leq 1$ we have
\begin{equation}\label{rephrased inequality}
d_{\e}(x,y) \asymp_{C(\e)} e^{-\e(x|y)_{b}}|xy|.
\end{equation}
\end{lem}

\begin{proof}
For $0 \leq t \leq 1$ we have the inequalities 
\[
1-e^{-\e t} \geq \e e^{-\e}t,
\]
and 
\[
e^{\e t}-1 \leq \e e^{\e} t,
\]
as can be verified by noting that equality holds at $t = 0$ and differentiating each side. Thus for $|xy| \leq 1$ the inequality of Lemma \ref{arc Harnack} implies that
\begin{equation}\label{unbounded inequality}
d_{\e}(x,y) \asymp_{C(\e)} \rho_{\e}(x)|xy| \asymp_{C(\e)} e^{-\e(x|y)_{b}}|xy|,
\end{equation}
with the final estimate following from
\[
\rho_{\e}(x) = e^{-\e b(x)} \asymp_{e^{\e}} e^{-\e(x|y)_{b}},
\]
since $|xy| \leq 1$ and $b$ is $1$-Lipschitz.
\end{proof}

The comparison \eqref{unbounded inequality} in Lemma \ref{rephrased arc Harnack} has the following important consequence, which proves the last claim of Theorem \ref{unbounded uniformization}.

\begin{prop}\label{bounded equivalence}
 $X_{\e}$ is bounded if and only if $b \in \mathcal{D}(X)$. 
\end{prop}

\begin{proof}
We first suppose that $b \in \mathcal{D}(X)$. For $x \in X$ we can then write $b(x) = |xz| + s$ for some $z \in X$ and $s \in \R$. We let $x \in X$ be given and let $\gamma$ be a geodesic joining $z$ to $x$. Then 
\begin{align*}
d_{\e}(x,z) &\leq \int_{\gamma} \rho_{\e}\, dt \\
&= e^{-s}\int_{0}^{|xz|}e^{-\e t}\,dt \\
&=\e^{-1}e^{-s}(1-e^{-\e |xz|}) \\
&\leq \e^{-1}e^{-s}. 
\end{align*}
It follows that $X_{\e}$ is bounded with $\mathrm{diam} \, X_{\e} \leq 2\e^{-1}e^{-s}$.

Now suppose that $b \in \mathcal{B}(X)$. Then we can find a geodesic ray $\gamma:[0,\infty) \rightarrow X$ and a constant $s \in \R$ such that $b = b_{\gamma}+s$. For each $t \geq 0$ we apply the comparison \eqref{unbounded inequality} with $x = \gamma(t)$ and $y = \gamma(t+1)$ to obtain
\[
d_{\e}(\gamma(t),\gamma(t+1)) \asymp_{C(\e)} e^{-\e b(\gamma(t))} = e^{\e(t-s)},
\]
since $b(\gamma(t)) = -t+s$. Thus as $t \rightarrow \infty$ we have $d_{\e}(\gamma(t),\gamma(t+1)) \rightarrow \infty$. It follows that $X_{\e}$ is unbounded. 
\end{proof}

We next use adapted parametrizations to estimate the length in $X_{\e}$ of geodesics in $X$. Below we write $\omega = \omega_{b}$ for the basepoint of $b$. 

\begin{lem}\label{epsilon geodesic}
Let $x,y \in X$ be given and let $\gamma$ be a geodesic in $X$ joining $x$ to $y$. Suppose that we are given $K \geq 0$ and points $x',y' \in X$ and geodesics $\omega x',\omega y'$ joining $\omega$ to $x$ and $y$ such that $\max\{|xx'|,|yy'|\} \leq K$. Then
\begin{equation}\label{epsilon geodesic estimate}
\l_{\e}(\gamma) \asymp_{C(\delta,K,\e)} e^{-\e(x|y)_{b}}\min\{1,|xy|\}.
\end{equation}
\end{lem}

\begin{proof}
Throughout this proof all additive constants $c \geq 0$ and $C \geq 1$ will depend only on $\delta$, $K$, and $\e$; we write $\doteq$ and $\asymp$ for $\doteq_{c}$ and $\asymp_{C}$ respectively.  We consider an arclength parametrization $\gamma: I \rightarrow X$ of $\gamma$ that is $c$-adapted to $b$ with $c = c(\delta,K)$ as constructed in Lemma \ref{star parametrize}. We assume that $\gamma$ is oriented from $x$ to $y$ and set $w = \gamma(0)$. By \eqref{adapted equation} we then have $b(w) \doteq (x|y)_{b}$. 

When $|xy| \leq 1$ we observe that $|zw| \leq |xy| \leq 1$ for all $z \in xy$. Inequality \eqref{Harnack} then implies that
\begin{equation}\label{small comparison}
\rho_{\e}(z) \asymp_{e^{\e}} \rho_{\e}(w).
\end{equation}
By integrating the comparison \eqref{small comparison} over $\gamma$ we obtain
\[
\l_{\e}(\gamma) \asymp \rho_{\e}(w)|xy| \asymp e^{-\e (x|y)_{b}}|xy|.
\]
This gives the estimate \eqref{epsilon geodesic estimate} when $|xy| \leq 1$. 

We now suppose that $|xy| \geq 1$. Let $\gamma_{1}:[-|xw|,0] \rightarrow X$ and $\gamma_{2}:[0,|yw|] \rightarrow X$ denote the parametrizations of the subsegments of $\gamma$ from $x$ to $w$ and from $w$ to $y$ respectively. Then, using \eqref{adapted equation} and $b(w) \doteq (x|y)_{b}$, we have
\begin{align*}
\l_{\e}(\gamma) &= \l_{\e}(\gamma_{1}) + \l_{\e}(\gamma_{2}) \\
&= \int_{\gamma_{1}}\rho_{\e} \, dt + \int_{\gamma_{2}}\rho_{\e} \, dt \\
&\asymp e^{-\e (x|y)_{b}} \left(\int_{0}^{|xw|}e^{-\e t}\, dt + \int_{0}^{|yw|}e^{-\e t}\, dt\right) \\
&= \e^{-1}e^{-\e (x|y)_{b}}(2-e^{-\e |xw|} - e^{-\e |yw|}).
\end{align*}
It follows immediately that
\[
\l_{\e}(\gamma) \leq Ce^{-\e (x|y)_{b}},
\]
with $C = C(\delta,K,\e)$. This gives the upper bound in \eqref{epsilon geodesic estimate} when $|xy| \geq 1$. For the lower bound we note that since $|xy| \geq 1$ and $|xw| + |yw| = |xy|$, we must have $\min\{|xw|,|yw|\} \geq \frac{1}{2}$. Therefore 
\[
\e^{-1}e^{-\e (x|y)_{b}}(2-e^{-\e |xw|} - e^{-\e |yw|}) \geq \e^{-1}e^{-(x|y)_{b}}(1-e^{-\frac{\e}{2}}). 
\]
This gives the lower bound in \eqref{epsilon geodesic estimate} when $|xy| \geq 1$. 
\end{proof}

In connection with Lemma \ref{epsilon geodesic} it is helpful to formulate the following definition. 

\begin{defn}\label{roughly geodesic}
Let $\omega \in X \cup \p X$ be given. For a constant $K \geq 0$ we say that $X$ is \emph{$K$-roughly geodesic from $\omega$} if for each $x \in X$ there exists $x' \in X$ and a geodesic $\omega x'$ joining $\omega$ to $x'$ such that $|x x'| \leq K$. 
\end{defn}

When $X$ is $K$-roughly geodesic from $\omega$ we can apply Lemma \ref{epsilon geodesic} freely to any geodesic $\gamma$ in $X$ with this constant $K$. If $\omega \in X$ then $X$ is $0$-roughly geodesic from $\omega$ since $X$ is geodesic. For the case $\omega \in \p X$ we note that if $X$ is $K$-roughly starlike from $\omega$ then $X$ is clearly also $K$-roughly geodesic from $\omega$. Note however that $X$ can be roughly geodesic from $\omega \in \p X$ without being roughly starlike from $\omega$; this happens for instance when $X$ is a tree with arbitrarily long finite branches. We also remark that $X$ is always $0$-roughly geodesic from any point of $\p X$ when it is proper. 

When $\rho_{\e}$ is a GH-density we thus obtain the following corollary of Lemma \ref{epsilon geodesic} using the GH-inequality \eqref{first GH}.

\begin{lem}\label{lem:proto estimate both}
Suppose that $X$ is $K$-roughly geodesic from $\omega$ and that $\rho_{\e}$ is a GH-density with constant $M$. Then for any $x,y \in X$ we have
\begin{equation}\label{proto estimate both}
d_{\e}(x,y) \asymp_{C(\delta,K,\e,M)} e^{-\e (x|y)_{b}}\min\{1,|xy|\}.
\end{equation} 
\end{lem}

Lemma \ref{epsilon geodesic} has the following immediate corollary when combined with Lemma \ref{rephrased arc Harnack}.

\begin{cor}\label{criterion}
Suppose that $X$ is $K$-roughly geodesic from $\omega$ and that there is a constant $M_{0} \geq 1$ such that for any $x,y \in X$ with $|xy| > 1$ we have
\begin{equation}\label{criterion estimate}
e^{-\e(x|y)_{b}} \leq M_{0}d_{\e}(x,y).
\end{equation}
Then $\rho_{\e}$ is a GH-density with constant $M = M(\delta,K,\e,M_{0})$. 
\end{cor}

\begin{proof}
If $x,y \in X$ with $|xy| \leq 1$ then combining Lemmas \ref{rephrased arc Harnack} and \ref{epsilon geodesic} establishes the GH-inequality \eqref{first GH} with $M = M(\delta,K,\e)$. If we instead have $|xy| > 1$ then the inequality \eqref{criterion estimate} implies the GH-inequality \eqref{first GH} with $M = M(\delta,K,\e,M_{0})$ by the estimate \eqref{epsilon geodesic estimate} and the fact that $d_{e}(x,y) \leq \l_{\e}(\gamma)$ for any geodesic $\gamma$ joining $x$ to $y$. 
\end{proof}

We will use Corollary \ref{criterion} to show for a CAT$(-1)$ space $X$ that  $\rho_{1} = \rho_{1,b}$ is a GH-density for any $b \in \hat{\mathcal{B}}(X)$ with a universal constant $M \geq 1$. Our proof will be based on the following four point inequality for CAT$(-1)$ spaces. 

\begin{prop}\label{prop:strong hyp}\cite[Proposition 3.3.4]{DSU17}
Let $X$ be a CAT$(-1)$ space. Then for any four points $x,y,z,w \in X$ we have
\begin{equation}\label{strong hyp inequality}
e^{-(x|z)_{w}} \leq e^{-(x|y)_{w}} + e^{-(y|z)_{w}}. 
\end{equation}
\end{prop}
 
 Metric spaces  satisfying the inequality \eqref{strong hyp inequality} are called \emph{strongly hyperbolic} in \cite{DSU17}.

The inequality \eqref{strong hyp inequality} can easily be improved to hold for Gromov products based at any function $b \in \hat{\mathcal{B}}(X)$. 

\begin{lem}\label{improve strong hyp}
Let $X$ be a CAT$(-1)$ space. Then for any $x,y,z \in X$ and $b \in \hat{\mathcal{B}}(X)$ we have
\begin{equation}\label{improve strong inequality}
e^{-(x|z)_{b}} \leq e^{-(x|y)_{b}} + e^{-(y|z)_{b}}. 
\end{equation}
\end{lem}

\begin{proof}
If $b \in \mathcal{D}(X)$ has the form $b(x) = d(x,w)$ for some $w \in X$ then \eqref{improve strong inequality} is just a restatement of \eqref{strong hyp inequality}. The inequality for $b$ of the form $b(x) = d(x,w) + s$ for some $w \in X$ and $s \in \R$ can then be obtained by multiplying the inequality \eqref{strong hyp inequality} through by $e^{-s}$. 

Now suppose that $b \in \mathcal{B}(X)$. Then we can find a geodesic ray $\gamma:[0,\infty) \rightarrow X$ and an $s \in \R$ such that $b = b_{\gamma} + s$. By multiplying the target inequality \eqref{improve strong inequality} through by $e^{s}$ we see that it suffices to consider the case that $b = b_{\gamma}$. Let $x,y,z \in X$ be given. For each $t \in [0,\infty)$ we have from \eqref{strong hyp inequality} that
\[
e^{-(x|z)_{\gamma(t)}} \leq e^{-(x|y)_{\gamma(t)}} + e^{-(y|z)_{\gamma(t)}}.
\]
After multiplying each side by $e^{t}$ and expanding the Gromov products we obtain that
\[
e^{-\frac{1}{2}(|x\gamma(t)|-t + |y\gamma(t)|-t -|xy|)} \leq e^{-\frac{1}{2}(|x\gamma(t)|-t + |z\gamma(t)|-t -|xz|)} + e^{-\frac{1}{2}(|y\gamma(t)|-t + |z\gamma(t)|-t -|yz|)}.
\]
Letting $t \rightarrow \infty$ then gives inequality \eqref{improve strong inequality}. 
\end{proof}

By combining Lemma \ref{improve strong hyp} with Corollary \ref{criterion} we can show that the density $\rho_{1}$ for $b \in \hat{\mathcal{B}}(X)$ is a GH-density when $X$ is a complete CAT$(-1)$ space. The completeness hypothesis is only used in the case $b \in \mathcal{B}(X)$.  

\begin{prop}\label{strong hyperbolic admissible}
There is a constant $M \geq 1$ such that for any complete CAT$(-1)$ space $X$ and any $b \in \hat{\mathcal{B}}(X)$ we have that $\rho_{1} = \rho_{1,b}$ is a GH-density with constant $M$. 
\end{prop}

\begin{proof}
We first observe that $X$ is always $0$-roughly geodesic from $\omega$. For $\omega \in X$ this is trivial, while for $\omega \in \p X$  this can be deduced from the completeness hypothesis together with the CAT$(-1)$ condition on $X$ \cite[Proposition 4.4.4]{DSU17}. Thus we can apply Corollary \ref{criterion} with $K = 0$. Let $x,y \in X$ be given and let $\omega \in X \cup \p X$ denote the basepoint of $b$.  Since $X$ is $\delta$-hyperbolic with $\delta = \delta(\mathbb{H}^{2})$ and we are restricting to the case $\e = 1$, it suffices by Corollary \ref{criterion} to produce a universal constant $M_{0} \geq 1$ such that for any  $x,y \in X$  with $|xy| > 1$ we have
\begin{equation}\label{admissible target}
e^{-(x|y)_{b}} \leq M_{0}d_{1}(x,y). 
\end{equation}

We consider a rectifiable curve $\eta: I \rightarrow X$ joining $x$ to $y$ that is parametrized by arclength (in $X$) and oriented from $x$ to $y$. We define a finite sequence of points $t_{0},\dots,t_{n}$ in $I$ inductively as follows: we set $t_{0}$ to be the left endpoint of $I$ and for each applicable $k > 0$ we set $t_{k}$ to be the supremum of all points $t \in I$ such that $t \geq t_{k-1}$ and $|\eta(s)\eta(t_{k-1})| < 1$ for each $t_{k-1} \leq s \leq t$. The finite length of $\eta$ in $X$ ensures that this sequence is finite, so that we have as a consequence that this process terminates at the right endpoint $t_{n}$ of $I$. The assumption $|xy| > 1$ implies that $n \geq 2$. By construction we then have $|\eta(t_{k})\eta(t_{k+1})| = 1$ for $0 \leq k \leq n-2$ and $|\eta(t_{n-1})\eta(t_{n})| \leq 1$. Since $b$ is $1$-Lipschitz it follows that
\[
e^{-(x|y)_{b}} \leq Ce^{-(\eta(t_{0})|\eta(t_{n-1}))_{b}},
\]
with $C = e$, recalling that $\eta(t_{0}) = x$. Hence to prove \eqref{admissible target} it suffices to show for any rectifiable curve $\eta$ joining $x$ to $y$ that we have  
\begin{equation}\label{reduce target}
e^{-(\eta(t_{0})|\eta(t_{n-1}))_{b}} \leq C\l_{1}(\eta),
\end{equation}
for some constant $C \geq 1$. Repeated application of the inequality \eqref{improve strong inequality} based at $b$ gives
\[
e^{-(\eta(t_{0})|\eta(t_{n-1}))_{b}} \leq \sum_{k=0}^{n-2}e^{-(\eta(t_{k})|\eta(t_{k+1}))_{b}} \leq C\sum_{k=0}^{n-2}e^{-b(\eta(t_{k}))},
\]
with the second inequality holding for $C = e$ since $b$ is $1$-Lipschitz and $|\eta(t_{k})\eta(t_{k+1})| = 1$. For $0 \leq k \leq n-2$ we have by construction that any $t \in [t_{k},t_{k+1}]$ satisfies $|\eta(t_{k})\eta(t)| \leq 1$. Hence the Harnack inequality \eqref{Harnack} implies that 
\[
\l_{1}(\eta|_{[t_{k},t_{k+1}]}) \asymp_{e} e^{-b(\eta(t_{k}))}\l(\eta|_{[t_{k},t_{k+1}]}) \geq e^{-b(\eta(t_{k}))}, 
\]
since the length of $\eta|_{[t_{k},t_{k+1}]}$ in $X$ is at least the distance $1$ between its endpoints. This shows that
\[
\sum_{k=0}^{n-2}e^{-b(\eta(t_{k}))} \leq C\sum_{k=0}^{n-2}\l_{1}(\eta|_{[t_{k},t_{k+1}]}) \leq C\l_{1}(\eta),
\]
with $C = e$. We conclude that the desired estimate \eqref{reduce target} holds, so that as a consequence $\rho_{1}$ is a GH-density with a universal constant $M \geq 1$. 
\end{proof}

By repurposing the proof of Proposition \ref{strong hyperbolic admissible} we are also able to show that $\rho_{\e}$ being a GH-density implies that $\rho_{\e'}$ is also a GH-density for each $0 < \e' \leq \e$, with a constant independent of $\e'$. 

\begin{prop}\label{inheritance}
Suppose that $X$ is $K$-roughly geodesic from $\omega$ and that $\rho_{\e}$ is a GH-density with constant $M$. Then there is a constant $M' = M'(\delta,K,\e,M)$ such that $\rho_{\e'}$ is a GH-density with constant $M'$ for any $0 < \e' \leq \e$. 
\end{prop}

\begin{proof}
Let $\e_{0} = \e_{0}(\delta)$ be the threshold determined by the Harnack inequality \eqref{Harnack} and Theorem \ref{Gehring-Hayman} such that $\rho_{\e}$ is a GH-density with constant $M = 20$ for $0 < \e \leq \e_{0}$. For the purpose of proving the proposition we can then assume that $\e > \e_{0}$ and $\e' \in [\e_{0},\e]$. We will first produce a constant $M_{0} = M_{0}(\delta,K,\e,\e',M)$ such that for any $x,y \in X$ with $|xy| > 1$ we have
\begin{equation}\label{admissible second target}
e^{-\e'(x|y)_{b}} \leq M_{0}d_{\e'}(x,y). 
\end{equation}
As in the proof of Proposition \ref{strong hyperbolic admissible}, we let $\eta: I \rightarrow X$ be a rectifiable curve joining $x$ to $y$ that is parametrized by arclength (in $X$) and oriented from $x$ to $y$. We then construct the finite sequence of points $t_{0},\dots,t_{n}$ in $I$ exactly as in the proof of Proposition \ref{strong hyperbolic admissible}, with $n \geq 2$ since $|xy| > 1$. Since $b$ is $1$-Lipschitz, we conclude as in that proof that it suffices to establish the estimate
\begin{equation}\label{reduce second target}
e^{-\e'(\eta(t_{0})|\eta(t_{n-1}))_{b}} \leq M_{1}\l_{\e'}(\eta),
\end{equation}
for any rectifiable curve $\eta$ joining $x$ to $y$, with $M_{1} = M_{1}(\delta,K,\e,\e',M)$.

We set $\beta = \e'/\e$ and observe that $\e_{0}/\e \leq \beta \leq 1$ by hypothesis. By the triangle inequality for $d_{\e}$ we have
\[
d_{\e}(\eta(t_{0}),\eta(t_{n-1})) \leq \sum_{k=0}^{n-2} d_{\e}(\eta(t_{k}),\eta(t_{k+1})),
\]
which implies that
\[
d_{\e}(\eta(t_{0}),\eta(t_{n-1}))^{\beta} \leq \sum_{k=0}^{n-2} d_{\e}(\eta(t_{k}),\eta(t_{k+1}))^{\beta},
\]
since $0 < \beta \leq 1$. By Lemma \ref{lem:proto estimate both} we then conclude that
\begin{align*}
e^{-\e'(\eta(t_{0})|\eta(t_{n-1}))_{b}} &\leq C\sum_{k=0}^{n-2}e^{-\e'(\eta(t_{k})|\eta(t_{k+1}))_{b}} \\
&\leq C\sum_{k=0}^{n-2}e^{-\e'b(\eta(t_{k}))},
\end{align*}
with $C = C(\delta,K,\e,\e',M)$, where the second inequality follows from the fact that $|\eta(t_{k})\eta(t_{k+1})| = 1$ by construction. Since $|\eta(t_{k})\eta(t)| \leq 1$ for each $t \in [t_{k},t_{k+1}]$, we have by the Harnack inequality \eqref{Harnack},
\[
\l_{\e'}(\eta|_{[t_{k},t_{k+1}]}) \asymp_{e^{\e'}} e^{-\e'b(\eta(t_{k}))}\l(\eta|_{[t_{k},t_{k+1}]}) \geq e^{-\e'b(\eta(t_{k}))}, 
\]
since the length of $\eta|_{[t_{k},t_{k+1}]}$ in $X$ is at least the distance $1$ between its endpoints. It follows that 
\[
\sum_{k=0}^{n-2}e^{-\e'b(\eta(t_{k}))} \leq C(\e')\sum_{k=0}^{n-2}\l_{\e'}(\eta|_{[t_{k},t_{k+1}]}) \leq C(\e')\l_{\e'}(\eta),
\]
with $C(\e') = e^{\e'}$. This proves the desired inequality \eqref{reduce second target}. 

By direct inspection of the calculations in this proof, as well as in the proofs of our previous lemmas in this section, one can verify that if $\e' \in [\e_{0},\e]$ then the constants can always be chosen to depend only on $\e_{0}$ and $\e$. This shows that the GH-constant $M'$ for $\rho_{\e'}$ can be chosen such that $M' = M'(\delta,K,\e,\e_{0},M)$. Since $\e_{0} = \e_{0}(\delta)$ we in fact obtain that $M' = M'(\delta,K,\e,M)$, as desired. 
\end{proof}

We recall that $\omega = \omega_{b}$ denotes the basepoint of $b$. We end this section by using Lemma \ref{epsilon geodesic} to construct a map 
\begin{equation}\label{construct identify}
\varphi_{\e} = \varphi_{\e,b}: \p_{\omega} X \rightarrow \p X_{\e},
\end{equation}
when $X$ is roughly geodesic from $\omega$ and $\p_{\omega} X \neq \emptyset$. We will be using the following corollary of the estimate \eqref{epsilon geodesic estimate} for $x,y \in X$, 
\begin{equation}\label{epsilon geodesic corollary}
d_{\e}(x,y) \leq C(\delta,\e,K) e^{-\e(x|y)_{b}}\min\{1,|xy|\}.
\end{equation}
 The inequality \eqref{epsilon geodesic corollary} implies that if $\{x_{n}\}$ is a sequence in $X$ that converges to infinity with respect to $\omega$ (recall this means that $(x_{m}|x_{n})_{b} \rightarrow \infty$ as $m,n \rightarrow \infty$) then $\{x_{n}\}$ is a Cauchy sequence in $X_{\e}$. Since $X$ and $X_{\e}$ are biLipschitz on bounded subsets of $X$ by Lemma \ref{arc Harnack}, it is easy to see that $\{x_{n}\}$ cannot converge to a point of $X_{\e}$. It follows that $X_{\e}$ is incomplete as long as $\p_{\omega}X$ contains at least one point; the only exceptions are when either $X$ is bounded or $\omega$ is the only point of $\p X$. Furthermore a second application of \eqref{epsilon geodesic corollary} shows that sequences $\{x_{n}\}$ and $\{y_{n}\}$ converging to infinity with respect to $\omega$ that are equivalent with respect to $\omega$ are equivalent as Cauchy sequences in $X_{\e}$, i.e., $d_{\e}(x_{n},y_{n}) \rightarrow 0$ as $n \rightarrow \infty$. Setting $\p X_{\e} = \bar{X}_{\e} \backslash X_{\e}$ to be the complement of $X_{\e}$ in its completion, we thus have a well-defined map $\varphi_{\e}: \p_{\omega} X \rightarrow \p X_{\e}$ given by sending a sequence $\{x_{n}\}$ converging to infinity with respect to $\omega$ to its limit in $\p X_{\e}$ inside of $\bar{X}_{\e}$. In the next section we will show that $\varphi_{\e}$ is a bijection when $X$ is roughly starlike from $\omega$ and $\rho_{\e}$ is a GH-density. We remark that the map $\varphi_{\e}$ need not be a bijection in general, see \cite[Proposition 4.1]{BBS21}.

\subsection{Uniformizing by Busemann functions}\label{subsec:busemann uniformize} For this section we let $X$ be a complete geodesic $\delta$-hyperbolic space. We let $b \in \mathcal{B}(X)$ be given with basepoint $\omega \in \p X$ and suppose that $X$ is $K$-roughly starlike from $\omega$. For a given $\e > 0$ we suppose $\rho_{\e,b}$ is a GH-density with constant $M$. As in Section \ref{subsec:estimate uniform}, to simplify notation we will drop $b$ from the notation for objects associated to the uniformization and write $\rho_{\e}:=\rho_{\e,b}$, $X_{\e} :=X_{\e,b}$, etc. As before, for a curve $\eta: I \rightarrow X_{\e}$ we will write $\l_{\e}(\eta):=\l_{\rho_{\e}}(\eta)$ for its length measured in the metric $d_{\e}$. For brevity, throughout this section we write $\doteq$ for equality up to an additive that depends only on $\delta$, $K$, $\e$, and $M$, and write $\asymp$ for equality up to a multiplicative constant that depends only on those same parameters. We write $c \geq 0$ and $C \geq 1$ for additive and multiplicative constants depending only on these parameters. 

The rough starlikeness of $X$ from $\omega$ implies that $\p_{\omega}X$ contains at least one point, so as a consequence the space $X_{\e}$ is incomplete by the discussion at the conclusion of the previous section. As before we write $\bar{X}_{\e}$ for the completion of the uniformization $X_{\e}$ and $\p X_{\e} = \bar{X}_{\e} \backslash X_{\e}$ for the boundary of $X_{\e}$ inside its completion. We will continue to write $d_{\e}$ for the canonical extension of this metric on $X_{\e}$ to the completion $\bar{X}_{\e}$. We write $d_{\e}(x) = \dist(x,\p X_{\e})$ for the distance to the boundary in $X_{\e}$. We let $\varphi_{\e}: \p_{\omega}X \rightarrow \p X_{\e}$ be the map constructed in \eqref{construct identify} at the end of the previous section by sending a sequence $\{x_{n}\}$ converging to infinity with respect to $\omega$ to its limit in $\p X_{\e}$ as a Cauchy sequence in $X_{\e}$. We formally extend the distance function $d_{\e}$ to $\p_{\omega}X$ by setting $d_{\e}(x,y):=d_{\e}(\varphi_{\e}(x),\varphi_{\e}(y))$ for $x,y \in X \cup \p_{\omega}X$, where we set $\varphi_{\e}(x) = x$ for $x \in X$. 

Our first lemma extends the estimate \eqref{proto estimate both} to hold for $x,y \in X \cup \p_{\omega} X$ with our formal extension of $d_{\e}$ to $\p_{\omega}X$. We recall our convention that for $x,y \in X \cup \p X$ we define $|xy| = \infty$ if $x \neq y$ and either $x \in \p X$ or $y \in \p X$, and define $|xy| = 0$ if $x = y \in \p X$.  

\begin{lem}\label{lem:estimate both}
For any $x,y \in X \cup \p_{\omega} X$ we have
\begin{equation}\label{estimate both}
d_{\e}(x,y) \asymp e^{-\e (x|y)_{b}}\min\{1,|xy|\}.
\end{equation} 
\end{lem}

\begin{proof}
The case in which $x,y \in X$ is an immediate consequence of Lemma \ref{lem:proto estimate both} since $X$ is $K$-roughly geodesic from $\omega$ (because $X$ is $K$-roughly starlike from $\omega$). We thus only need to consider the case in which at least one of the points belongs to $\p_{\omega}X$. Since \eqref{estimate both} holds trivially when $x = y$ we can assume that $x \neq y$. We can then assume without loss of generality that $x \in \p_{\omega}X$. We then need to show that $d_{\e}(x,y) \asymp e^{-\e (x|y)_{b}}$. We let $\{x_{n}\}$ and $\{y_{n}\}$ be sequences converging to infinity with respect to $\omega$ in $X$ that represent the points $x$ and $y$ respectively; if $y \in X$ then we instead set $y_{n} = y$ for all $n$. Our definition of the extension of $d_{\e}$ to $\p_{\omega}X$ then implies that we have $\lim_{n \rightarrow \infty} d_{\e}(x_{n},y_{n}) = d_{\e}(x,y)$. For $n$ sufficiently large we will have $(x|y)_{b} \doteq_{600\delta} (x_{n}|y_{n})_{b}$ by Lemma \ref{busemann inequality} and we will have $|x_{n}y_{n}| \geq 1$. The comparison \eqref{estimate both} then follows from the corresponding comparison for $x_{n}$ and $y_{n}$ for sufficiently large $n$. 
\end{proof}

%\begin{rem}
%The constant in inequality \eqref{estimate both} only depends on the rough starlikeness constant $K$ through the use of Lemma \ref{star parametrize}. In the cases in which we can find geodesics joining $\omega$ to $x$ and $y$ (such as when $X$ is proper), the rough starlikeness hypothesis is unnecessary to prove Lemma \ref{lem:estimate both} and the comparison constant in \eqref{estimate both} can be taken to depend only on $\delta$, $\e$, and $M$. 
%\end{rem}

By using Lemma \ref{lem:estimate both} we are able to show that $\varphi_{\e}$ is a bijection. 

\begin{lem}\label{boundary identification}
The map $\varphi_{\e}: \p_{\omega} X \rightarrow \p X_{\e}$ is a bijection. 
\end{lem}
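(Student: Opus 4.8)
The plan is to prove injectivity and surjectivity separately, in both cases leveraging Lemma \ref{estimate both} to translate statements about the metric $d_{\e}$ into statements about Gromov products $(\cdot|\cdot)_{b}$, and then using the $4\delta$-inequality for Busemann-based Gromov products (Lemma \ref{busemann inequality}) together with the rough starlikeness hypothesis. For injectivity, suppose $\xi,\xi' \in \p_{\omega}X$ are distinct. Pick geodesic lines $\gamma,\sigma:\R \to X$ from $\omega$ to $\xi$ and $\xi'$ respectively, parametrized as in Lemma \ref{lemma incomplete}. Since $\xi \neq \xi'$, the Gromov product $(\xi|\xi')_{b}$ is finite; I would argue via Lemma \ref{busemann inequality}(1) and the adapted parametrizations that $b(\gamma(t))$ and $b(\sigma(t))$ both grow like $t$ while $(\gamma(t)|\sigma(t))_{b}$ stays bounded above (by roughly $(\xi|\xi')_{b} + c$), so that $|\gamma(t)\sigma(t)| = b(\gamma(t)) + b(\sigma(t)) - 2(\gamma(t)|\sigma(t))_{b} \to \infty$ and in particular $\e|\gamma(t)\sigma(t)| \geq 1$ for large $t$. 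Then Lemma \ref{estimate both}, inequality \eqref{estimate large}, gives $d_{\e}(\gamma(t),\sigma(t)) \asymp \e^{-1}e^{-\e(\gamma(t)|\sigma(t))_{b}} \gtrsim \e^{-1}e^{-\e((\xi|\xi')_{b}+c)}$, a positive quantity bounded below independently of $t$. Letting $t \to \infty$, this shows $d_{\e}(\iota(\xi),\iota(\xi')) > 0$, so $\iota(\xi) \neq \iota(\xi')$.

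**Surjectivity.** For surjectivity, let $z \in \p X_{\e}$ be arbitrary and choose a sequence $\{x_{n}\} \subset X$ with $x_{n} \to z$ in $\bar{X}_{\e}$; in particular $\{x_{n}\}$ is $d_{\e}$-Cauchy. First I would check that $b(x_{n}) \to +\infty$: if not, some subsequence has $b(x_{n_{k}})$ bounded above, and then by the Harnack inequality \eqref{Harnack} the densities $\rho_{\e}(x_{n_{k}})$ are bounded below, so Lemma \ref{arc Harnack} forces $|x_{n_{k}} x_{m}|$ (measured in $X$) to be small whenever $d_{\e}(x_{n_{k}},x_{m})$ is small — making $\{x_{n_{k}}\}$ a Cauchy sequence in $X$; by completeness of $X$ it would converge to a point $x \in X$, and then (again by the local biLipschitz equivalence on a ball around $x$) $x_{n_{k}} \to x$ in $X_{\e}$, so $z = x \in X_{\e}$, contradicting $z \in \p X_{\e}$. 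Hence $b(x_{n}) \to \infty$. Next I would show $(x_{n}|x_{m})_{b} \to \infty$ as $n,m \to \infty$: since $d_{\e}(x_{n},x_{m}) \to 0$ but $\rho_{\e}(x_{n}) = e^{-\e b(x_{n})}$ is also small, I need Lemma \ref{estimate both} again. If $\e|x_{n}x_{m}| \geq 1$ then \eqref{estimate large} gives $d_{\e}(x_{n},x_{m}) \asymp \e^{-1}e^{-\e(x_{n}|x_{m})_{b}}$, so $d_{\e} \to 0$ forces $(x_{n}|x_{m})_{b} \to \infty$; if $\e|x_{n}x_{m}| \leq 1$ then $(x_{n}|x_{m})_{b} \geq \min\{b(x_{n}),b(x_{m})\} - \tfrac12 \to \infty$ by \eqref{both busemann} and the fact that $|x_{n}x_{m}|$ is bounded. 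Either way $(x_{n}|x_{m})_{b} \to \infty$, so $\{x_{n}\}$ converges to infinity with respect to $\omega$; by Proposition \ref{convergence Busemann} it defines a point $\xi \in \p_{\omega}X$. Finally I would verify $\iota(\xi) = z$: taking a geodesic line $\gamma$ from $\omega$ to $\xi$, the sequences $\{\gamma(n)\}$ and $\{x_{n}\}$ both converge to $\xi$, so $(\gamma(n)|x_{n})_{b} \to \infty$, and by the same case analysis via Lemma \ref{estimate both} we get $d_{\e}(\gamma(n),x_{n}) \to 0$; since $\gamma(n) \to \iota(\xi)$ in $\bar{X}_{\e}$ by construction and $x_{n} \to z$, we conclude $z = \iota(\xi)$.

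**Main obstacle.** The step I expect to require the most care is the surjectivity argument — specifically establishing that $b(x_{n}) \to \infty$ for an \emph{arbitrary} $d_{\e}$-Cauchy sequence representing a boundary point, and that the Gromov-product divergence then follows cleanly. The subtlety is that one only controls $d_{\e}(x_{n},x_{m})$ along the sequence, not distances to a fixed reference point, so the argument must be purely internal to the sequence; the dichotomy in Lemma \ref{estimate both} according to whether $\e|x_{n}x_{m}|$ is large or small is exactly what makes this work, but one has to handle both branches and the case where $|x_{n}x_{m}|$ does not go to zero versus where it does. This is also the only place the completeness of $X$ is used, as flagged in the text. The injectivity half and the final identification $\iota(\xi)=z$ are comparatively routine once the dictionary between $d_{\e}$-estimates and Gromov products via Lemma \ref{estimate both} is set up.
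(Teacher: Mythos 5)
Your proof is correct and follows essentially the same route as the paper's: injectivity by applying Lemma \ref{estimate both} along geodesic lines from $\omega$, and surjectivity by using completeness of $X$ together with the Harnack-type estimates to rule out a limit in $X_{\e}$, then Lemma \ref{estimate both}, Proposition \ref{convergence Busemann}, and rough starlikeness to produce $\xi$ with $\iota(\xi)=z$ (the paper instead passes to a subsequence with $\e|x_{m}x_{n}|\geq 1$, but your two-case analysis achieves the same thing). One cosmetic correction: in the branch $\e|x_{n}x_{m}|\leq 1$ the lower bound is $(x_{n}|x_{m})_{b}\geq \min\{b(x_{n}),b(x_{m})\}-\tfrac{1}{2\e}$, coming directly from the definition of $(\cdot|\cdot)_{b}$ rather than from \eqref{both busemann} (which is the reverse inequality), and the conclusion $(x_{n}|x_{m})_{b}\to\infty$ is unaffected.
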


\begin{proof}
The injectivity of $\varphi_{\e}$ follows immediately from Lemma \ref{lem:estimate both} applied to $x \neq y \in \p_{\omega}X$. Thus our focus will be on showing that $\varphi_{\e}$ is surjective. Let $\{x_{n}\}$ be a Cauchy sequence in $X_{\e}$ that converges to a point $z \in \p X_{\e}$. We claim that the sequence $\{x_{n}\}$ cannot belong to a bounded subset of $X$. If it did then for a fixed $p \in X$ there would be an $r > 0$ such that $\{x_{n}\} \subset B(p,r)$ for all $n$, with $B(p,r)$ denoting the ball of radius $r$ centered at $p$ in $X$. Lemma \ref{arc Harnack} shows that the metrics on $X$ and $X_{\e}$ are biLipschitz to one another on $B(p,2r)$, which implies that $\{x_{n}\}$ is also a Cauchy sequence in $X$. Since $X$ is complete this Cauchy sequence must converge in $X$ to a point $y \in B(p,2r)$. However this means that $\{x_{n}\}$ also converges to $y$ in $X_{\e}$, contradicting that $\{x_{n}\}$ converges to a point of $\p X_{\e}$. 

Thus, by passing to a subsequence if necessary, we can assume that $|x_{m}x_{n}| \geq 1$ for $m \neq n$. It then follows from Lemma \ref{lem:estimate both} that for $m \neq n$, 
\[
d_{\e}(x_{m},x_{n}) \asymp e^{-\e (x_{m}|x_{n})_{b}}.
\]
Since $d_{\e}(x_{n},x_{m}) \rightarrow 0$ as $m,n \rightarrow \infty$, we conclude that $(x_{m}|x_{n})_{b} \rightarrow \infty$ as $m,n \rightarrow \infty$. Thus $\{x_{n}\}$ converges to infinity with respect to $\omega$. Letting $\xi \in \p_{\omega}X$ denote the point in the Gromov boundary relative to $\omega$ represented by the sequence $\{x_{n}\}$,  the construction of $\varphi_{\e}$ then shows that $\varphi_{\e}(\xi) = z$. We conclude that $\varphi_{\e}$ is surjective. 
\end{proof}

We can now prove Theorem \ref{identification theorem} in the case $b \in \mathcal{B}(X)$. We recall the definition \eqref{visual quasi} of the model visual quasi-metric $\theta_{\e,b}$ on $\p_{\omega}X$.

\begin{proof}[Proof of Theorem \ref{identification theorem}]
The fact that $\varphi_{\e}$ is a bijection follows from Lemma \ref{boundary identification}. To complete the proof of Theorem \ref{identification theorem} it then suffices to show that for any $\xi,\zeta \in \p_{\omega}X$ there is a constant  $L = L(\delta,K,\e,M)$ such that
\[
d_{\e}(\xi,\zeta) \asymp_{L} \theta_{\e,b}(\xi,\zeta) = e^{-\e(\xi|\zeta)_{b}}.
\] 
This desired comparison then follows from Lemma \ref{lem:estimate both}.
\end{proof} 

The next lemma shows that the distance to $\p X_{\e}$ can be computed in terms of the density $\rho_{\e}$.

\begin{lem}\label{compute distance}
For $x \in X$ we have
\begin{equation}\label{compute distance inequality}
d_{\e}(x) \asymp \rho_{\e}(x).
\end{equation}
\end{lem}

\begin{proof}
Let $x \in X$ be given. We first compute the upper bound in \eqref{compute distance inequality}. By the rough-starlikeness condition we can find a geodesic line $\gamma: \R \rightarrow X$ starting at $\omega$ and ending at some $\xi \in \p X$ with $\dist(x,\gamma) \leq K$. Using Lemma \ref{geodesic busemann} we can consider $\gamma$ as parametrized by arclength with $b(\gamma(t)) \doteq_{144\delta} t$ for each $t \in \R$. We let $s \in \R$ be such that $|x\gamma(s)| \leq K$. We then compute, 
\begin{align*}
\l_{\e}(\gamma|_{[s,\infty)}) &= \int_{s}^{\infty}e^{-\e b(\gamma(t))}\,dt \\
& \asymp \int_{s}^{\infty}e^{-\e t}\,dt \\
&= \e^{-1}e^{-\e s}.
\end{align*}
By Lemma \ref{arc Harnack} we then have
\begin{align*}
d_{\e}(x,\xi) &\leq d_{\e}(x,\gamma(s)) + d_{\e}(\gamma(s),\xi) \\
&\leq \e^{-1}\rho_{\e}(x)(e^{\e |x\gamma(s)|}-1) + \l_{\e}(\gamma|_{[s,\infty)}) \\
&\leq C\rho_{\e}(x).
\end{align*}
Since $d_{\e}(x) \leq d_{\e}(x,\xi)$ the upper bound follows. 

For the lower bound we let $\xi \in \p X_{\e}$ be a given point, which we can think of as a point in $\p_{\omega}X$ using Lemma \ref{boundary identification}. By rough starlikeness we can then find a geodesic line $\gamma: \R \rightarrow X$ starting at $\omega$ and ending at $\xi$. For $n \in \N$ we note that $|x\gamma(n)| \rightarrow \infty$ as $n \rightarrow \infty$, so we will have $|x\gamma(n)| \geq 1$ for all sufficiently large $n$. For sufficiently large $n$ we can then apply \eqref{estimate both} and Lemma \ref{busemann inequality} to obtain 
\[
d_{\e}(x,\gamma(n)) \asymp  e^{-\e (x|\gamma(n))_{b}} \asymp e^{-\e (x|\xi)_{b}}.
\]
By \eqref{both busemann boundary} we have $(x|\xi)_{b} \leq b(x) + 600\delta$. By combining this with the above we obtain that
\[
d_{\e}(x,\gamma(n)) \geq C^{-1}\rho_{\e}(x).
\]
This gives the lower bound since $d_{\e}(x,\gamma(n)) \rightarrow d_{\e}(x,\xi)$ as $n \rightarrow \infty$. 
\end{proof}

We can now complete the proof of Theorem \ref{unbounded uniformization} in the case $b \in \mathcal{B}(X)$. Since we have already shown that $X_{\e}$ is unbounded in Proposition \ref{bounded equivalence}, to prove Theorem \ref{unbounded uniformization} we only need to show that geodesics in $X$ are uniform curves in $X_{\e}$. For this we need to extend the definition of uniform curves to cover curves defined on arbitrary subintervals $I \subset \R$. 

As in Definition \ref{def:uniform}, we let $(\Omega,d)$ be an incomplete metric space, set $\p \Omega = \bar{\Omega} \backslash \Omega$, and set $d_{\Omega}(x)=\dist(x,\p \Omega)$. We consider a curve $\gamma:I \rightarrow \Omega$ defined on an arbitrary subinterval $I \subset \R$; we write $t_{-} \in [-\infty,\infty)$ and $t_{+} \in (-\infty,\infty]$ for the endpoints of $I$. If $\gamma$ has finite length, $\l(\gamma) < \infty$, then $\gamma$ has well-defined endpoints $\gamma_{-},\gamma_{+} \in \bar{\Omega}$ defined by the limits $\gamma(t_{-}) = \lim_{t \rightarrow t_{-}} \gamma(t)$ and $\gamma(t_{+}) = \lim_{t \rightarrow t_{+}} \gamma(t)$ in $\bar{\Omega}$, which exist because $\l(\gamma) < \infty$.  

\begin{defn}\label{def:extend uniform}For a constant $A \geq 1$ and an interval $I \subset \R$, a curve $\gamma: I \rightarrow \Omega$ with $\l(\gamma) < \infty$ is \emph{$A$-uniform} if 
\begin{equation}\label{extend uniform one}
\l(\gamma) \leq Ad(\gamma_{-},\gamma_{+}),
\end{equation}
and if for every $t \in I$ we have
\begin{equation}\label{extend uniform two}
\min\{\l(\gamma|_{I_{\leq t}}),\l(\gamma|_{I_{\geq t}})\} \leq A d_{\Omega}(\gamma(t)). 
\end{equation}
If $\l(\gamma) = \infty$ then we instead define $\gamma$ to be $A$-uniform if \eqref{extend uniform two} holds and if $d(\gamma(s),\gamma(t)) \rightarrow \infty$ as $s \rightarrow t_{-}$ and $t \rightarrow t_{+}$.
\end{defn}

%Observe that with this extended definition the inequality \eqref{uniform two} implies that an $A$-uniform curve $\gamma$ is always \emph{locally rectifiable}, meaning that each compact subcurve of $\gamma$ is rectifiable. We note that it is easily verified from the definitions that the property of a curve $\gamma$ being $A$-uniform is independent of the choice of parametrization of $\gamma$.

\begin{prop}\label{uniformization prop}
There is an $A = A(\delta,K,\e,M)$ such that any geodesic in $X$ is an $A$-uniform curve in $X_{\e}$. Consequently $X_{\e}$ is $A$-uniform. 
\end{prop}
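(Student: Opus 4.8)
The plan is to fix a geodesic $xy$ in $X$ and verify the two conditions of Definition \ref{def:uniform} for this curve, viewed inside $X_{\e}$, with a uniformity constant $A$ depending only on $\delta$, $K$, $\e$, $M$. First I would introduce the adapted parametrization $\eta: I \to X$ of $xy$ furnished by Lemma \ref{star parametrize}, with constant $c = c(\delta,K)$, oriented from $x$ to $y$ and with $0 \in I$; write $w = \eta(0)$, so that $b(w) \doteq (x|y)_b$ by \eqref{adapted equation}. Splitting $\eta$ at $w$ into the pieces $\eta_1$ (from $x$ to $w$) and $\eta_2$ (from $w$ to $y$), the computation already carried out in the proof of Lemma \ref{estimate both} gives
\[
\l_{\e}(\eta|_{\{s \le t\}}) \asymp e^{-\e(x|y)_b}\,\e^{-1}\bigl(1 - e^{-\e\,|\eta^{-1}(x) - t|}\bigr) \quad\text{(and symmetrically for the other side),}
\]
where implied constants depend only on $\delta$, $K$, $\e$. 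In particular the total length satisfies $\l_{\e}(\eta) \asymp e^{-\e(x|y)_b}\min\{|xy|,\e^{-1}\}$, which together with Lemma \ref{estimate both} yields $\l_{\e}(\eta) \le A\,d_{\e}(x,y)$ — this is condition \eqref{uniform one}. (One must handle the two regimes $\e|xy| \le 1$ and $\e|xy| \ge 1$ separately, exactly as in Lemma \ref{estimate both}; in the small regime $\l_\e(\eta)\asymp e^{-\e(x|y)_b}|xy|$, in the large regime $\l_\e(\eta)\asymp \e^{-1}e^{-\e(x|y)_b}$.)

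For the cigar condition \eqref{uniform two}, fix $t \in I$ and write $z = \eta(t)$. By Proposition \ref{compute distance} we have $d_{\e}(z) \asymp \e^{-1}\rho_{\e}(z) = \e^{-1}e^{-\e b(z)}$, and by the adapted property $b(z) \doteq |t - 0| + (x|y)_b$ where I am measuring the parameter $t$ relative to $w=\eta(0)$; more precisely $b(\eta(t)) \doteq_{c} |t| + (x|y)_b$ after shifting so that $w$ sits at parameter $0$. On the other hand, the length of the sub-curve of $\eta$ from $z$ back to whichever endpoint lies on the same side of $w$ as $z$ is, by the displayed integral above, $\asymp e^{-\e(x|y)_b}\,\e^{-1}(1 - e^{-\e\,\mathrm{dist}_\eta(z, \text{that endpoint})})$, which is bounded above by $C\e^{-1}e^{-\e(x|y)_b}e^{-\e|t|} \asymp d_\e(z)$. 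Since the minimum in \eqref{uniform two} is at most the length of that sub-curve on the shorter side, this gives $\min\{\cdots\} \le A\,d_{\e}(z)$. The one subtlety is that $z$ may lie on the long side of $w$, so that the shorter side in \eqref{uniform two} is the one containing $w$: but then the shorter side has length at most $\l_\e(\eta_i)$ for the piece $\eta_i$ not containing $z$, which is $\asymp e^{-\e(x|y)_b}\e^{-1}$, while $d_\e(z)\asymp \e^{-1}e^{-\e b(z)}$ and $b(z)$ is within an additive constant of $(x|y)_b$ whenever $z$ lies between $w$ and the far endpoint — wait, that is false; rather, when $z$ is far from $w$ we instead use that $d_\e(z)$ is comparable to the length of the sub-curve from $z$ to the near endpoint on $z$'s own side, which is the shorter side, so the case split resolves cleanly either way. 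Both estimates combine to give \eqref{uniform two} with $A = A(\delta,K,\e,M)$.

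Having shown bounded geodesics in $X$ are $A$-uniform in $X_{\e}$, the final sentence follows because any two points of $X_{\e}$ are joined by a geodesic of $X$ (as $X$ is geodesic), and such a geodesic is bounded since its two endpoints lie in $X$; hence it is an $A$-uniform curve in $X_{\e}$, so $X_{\e}$ is an $A$-uniform metric space, and it is unbounded by Lemma \ref{lemma incomplete}. The main obstacle I anticipate is the bookkeeping in the cigar condition: one has to be careful about which endpoint of $xy$ is "closer along $\eta$" to a given point $z = \eta(t)$ and to track that the comparison $d_\e(z) \asymp \e^{-1}e^{-\e b(z)}$ interacts correctly with the two-regime length estimates, rather than any genuinely new idea — all the analytic inputs (Lemmas \ref{arc Harnack}, \ref{estimate both}, \ref{star parametrize}, Proposition \ref{compute distance}) are already in hand.
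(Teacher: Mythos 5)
Your proposal is correct and follows essentially the same route as the paper: an adapted parametrization from Lemma \ref{star parametrize}, the integral estimate for the piece of $\eta$ on the same side of $\eta(0)$ as the given point, and Proposition \ref{compute distance} to convert $\e^{-1}\rho_{\e}(\eta(s))$ into $d_{\e}(\eta(s))$ (the paper gets \eqref{uniform one} even more directly, with $A=M$, from the admissibility inequality \eqref{first GH}). Your worry about which side is shorter is vacuous --- since the minimum in \eqref{uniform two} is bounded by the length of either side, bounding the side that does not cross $\eta(0)$ suffices, which is exactly what the paper does.
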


\begin{proof}
We first consider the case of a geodesic $xy$ in $X$ joining two points $x,y \in X \cup \p_{\omega} X$. Let $\gamma:I \rightarrow X$ be a parametrization of $xy$ that is $c$-adapted to $b$, $c = c(\delta,K)$, as in Lemma \ref{star parametrize}; we can always find points $x', y' \in X \cup \p_{\omega}X$ satisfying the hypotheses of the lemma by the $K$-rough starlikeness hypothesis from $\omega$. Let $t_{-} \in [-\infty,\infty)$, $t_{+} \in (-\infty,\infty]$ be the endpoints of $I$. Let $\{s_{n}\},\{t_{n}\} \subset I$ be sequences such that $s_{n} \rightarrow t_{-}$, $t_{n} \rightarrow t_{+}$, and $s_{n} < t_{n}$ for each $n$. Applying inequality \eqref{first GH} to the geodesic $\gamma|_{[s_{n},t_{n}]}$ joining $\gamma(s_{n})$ to $\gamma(t_{n})$ gives
\begin{equation}\label{sequential GH}
\l_{\e}(\gamma|_{[s_{n},t_{n}]}) \leq Md_{\e}(\gamma(s_{n}),\gamma(t_{n})).
\end{equation}
Letting $n \rightarrow \infty$, the left side of \eqref{sequential GH} converges to $\l_{\e}(\gamma)$. If $x \in X$ then the sequence $\{\gamma(s_{n})\}$ converges to $x$ in $X$, while if $x \in \p_{\omega}X$ then the sequence $\{\gamma(s_{n})\}$ belongs to the equivalence class of $X$ with respect to $\omega$. In both cases we then have that $\{\gamma(s_{n})\}$ converges to $x$ in $\bar{X}_{\e}$, with the second case following from the construction of the identification $\varphi_{\e}: \p_{\omega}X \rightarrow \p X_{\e}$. The same discussion applies to the sequence $\{\gamma(t_{n})\}$ in relation to $y$. It follows that $d_{\e}(\gamma(s_{n}),\gamma(t_{n})) \rightarrow d_{\e}(x,y)$ as $n \rightarrow \infty$. Consequently $\gamma$ has finite length in $X_{\e}$ with endpoints $\gamma_{-} = x$ and $\gamma_{+} = y$. The inequality \eqref{extend uniform one} then follows by letting $n \rightarrow \infty$ in \eqref{sequential GH}. 

We next verify inequality \eqref{extend uniform two}. It suffices to verify this inequality in the case that $s \in I_{\geq 0}$, since we can deduce the case $s \in I_{\leq 0}$ from this by reversing the roles of $x$ and $y$. We thus assume that $s \in I_{\geq 0}$. A straightforward calculation with \eqref{adapted equation} gives us that
\begin{equation}\label{near uniform}
\l_{\e}(\gamma|_{I_{\geq s}}) \asymp e^{-\e (x|y)_{b}}\int_{I_{\geq s}} e^{-\e t} \,dt \leq \e^{-1} e^{-\e (x|y)_{b}} e^{-\e s}. 
\end{equation}
Since $s + (x|y)_{b} \doteq b(\eta(s))$, it then follows from \eqref{near uniform} and Lemma \ref{compute distance} that
\[
\l_{\e}(\gamma|_{I_{\geq s}}) \leq C \rho_{\e}(\gamma(s)) \leq C d_{\e}(\gamma(s)),
\]
with $C = C(\delta,K,\e,M)$. We conclude that $\gamma$ is an $A$-uniform curve in $X_{\e}$ with  $A = A(\delta,K,\e,M)$. Since any two points $x,y \in X$ can be joined by a geodesic $xy$ in $X$, this implies that the metric space $X_{\e}$ is $A$-uniform. 

It remains to treat the case of a geodesic $\omega x$ joining the basepoint $\omega \in \p X$ of $b$ to a point $x \in X \cup \p_{\omega}X$. By applying (2) of Lemma \ref{geodesic busemann} with $u = 0$ we can find an arclength parametrization $\gamma:(-\infty,a] \rightarrow X$ of $\omega x$, $a \in (-\infty,\infty]$, with $b(\gamma(t)) \doteq_{144\delta} t$ for $t \in (-\infty,a]$. For $s \leq t \in (-\infty,a]$ we then have, by a computation similar to the one done in Lemma \ref{compute distance},
\begin{equation}\label{two endpoint}
\l_{\e}(\gamma|_{[s,t]}) \asymp e^{-\e s} - e^{-\e t}. 
\end{equation}
By letting $s \rightarrow -\infty$ we conclude that $\l_{\e}(\gamma) = \infty$. The comparison \eqref{two endpoint} together with the GH-inequality \eqref{first GH} then implies that $d_{\e}(\gamma(s),\gamma(t)) \rightarrow \infty$ as $s \rightarrow -\infty$ and $t \rightarrow a$. Thus our replacement condition in the infinite length case for \eqref{extend uniform one} is satisfied. Combining \eqref{two endpoint} and Lemma \ref{compute distance} also implies for each $s \in (-\infty,a]$ that 
\[
\l_{\e}(\gamma|_{I_{\geq s}}) \leq Ce^{-\e s} \leq C\rho_{\e}(\gamma(s)) \leq Cd_{\e}(\gamma(s)),
\]
with $C = C(\delta,K,\e,M)$ in each inequality. Thus \eqref{extend uniform two} also holds for $\gamma$ with $A = A(\delta,K,\e,M)$. We conclude that $\gamma$ is an $A$-uniform curve in $X_{\e}$ in this case as well.  
\end{proof}

\subsection{Uniformizing by distance functions}\label{subsec:distance uniformize}For this section we assume the same setup as in Section \ref{subsec:busemann uniformize}, with the exception that we will be assuming instead that $b \in \mathcal{D}(X)$ with basepoint $z \in X$.  We will reduce Theorems \ref{unbounded uniformization} and \ref{identification theorem} in this case to the case considered in Section \ref{subsec:busemann uniformize} using the following general construction. 

\begin{defn}\label{ray augment} For a metric space $(X,d_{X})$ and a point $z \in X$ we let $Y = X \cup_{z \sim 0} [0,\infty)$ be the metric space obtained by gluing the half-line $[0,\infty)$ to $X$ by identifying the point $z$ with $0 \in [0,\infty)$. The metric $d_{Y}$ on $Y$ is defined by setting $d_{Y}(x,y) = d_{X}(x,y)$ for $x,y \in X$, $d_{Y}(s,t) = |s-t|$ for $s,t \in [0,\infty)$, and $d_{Y}(x,s) = d_{X}(x,z) + s$ for $x \in X$, $s \in [0,\infty)$. The space $X$ then has a canonical isometric embedding into $Y$. We refer to the metric space $(Y,d_{Y})$ as the \emph{ray augmentation of $X$ based at $z$}.
\end{defn} 

The following trick will be the basis of many of the results we prove regarding the ray augmentation. 

\begin{lem}\label{shorten}
Let $(X,d_{X})$ be a metric space. Let $(Y,d_{Y})$ be the ray augmentation of $X$ based at $z \in X$. Then for each curve $\gamma:I \rightarrow Y$ in $Y$ there is a curve $\sigma:I \rightarrow X$ such that $\sigma(t) = \gamma(t)$ when $\gamma(t) \in X$ and $\sigma(t) = z$ when $\gamma(t) \notin X$. 
\end{lem}

\begin{proof}
Consider the retraction $r: Y \rightarrow X$ given by setting $r(x) = x$ for $x \in X$ and $r(t) = z$ for $t \in [0,\infty)$. It is easy to see that $r$ is $1$-Lipschitz; in particular $r$ is continuous. For a given curve $\gamma:I \rightarrow Y$ the curve $\sigma = r \circ \gamma:I \rightarrow X$ then has the desired properties.  
\end{proof}

We refer to the curve $\sigma$ constructed from $\gamma$ in Lemma \ref{shorten} as the \emph{shortening} of $\gamma$ to $X$. We apply Lemma \ref{shorten} to conformal deformations of the ray augmentation. 

\begin{lem}\label{transfer admissible}
Let $(X,d_{X})$ be a geodesic metric space and let $(Y,d_{Y})$ be the ray augmentation of $X$ based at $z \in X$. Let $\rho: X \rightarrow (0,\infty)$ be a continuous density on $X$ and let $\t{\rho}:Y \rightarrow (0,\infty)$ be a continuous density on $Y$ with $\t{\rho}|_{X} = \rho$. Then the isometric embedding $X \rightarrow Y$ induces an isometric embedding $X_{\rho} \rightarrow Y_{\t{\rho}}$ of the corresponding conformal deformations. Furthermore $\rho$ is a GH-density with constant $M \geq 1$ if and only if $\t{\rho}$ is a GH-density with the same constant $M$.
\end{lem} 

\begin{proof}
We write $d_{\rho}$ for the metric on $X_{\rho}$ and $d_{\t{\rho}}$ for the metric on $Y_{\t{\rho}}$. Let $x,y \in X$ be given. Let $\gamma:I \rightarrow Y$ be a rectifiable curve joining them and let $\sigma:I \rightarrow X$ be the shortening of $\gamma$ to $X$. Since $\t{\rho}|_{X} = \rho$ we clearly have
\[
\int_{\sigma} \t{\rho}\, ds = \int_{\sigma} \rho\, ds = \int_{\gamma^{-1}(X)} \rho\, ds \leq \int_{\gamma} \t{\rho}\, ds.
\]
It follows that $\l_{\t{\rho}}(\sigma) \leq \l_{\t{\rho}}(\gamma)$. Thus in computing $d_{\t{\rho}}(x,y)$ it suffices to minimize $\l_{\t{\rho}}(\gamma)$ over curves $\gamma: I \rightarrow X$ taking values only in $X$. Since for such curves we have $\l_{\t{\rho}}(\gamma) = \l_{\rho}(\gamma)$, it immediately follows that $d_{\t{\rho}}(x,y) = d_{\rho}(x,y)$. We conclude that the embedding $X_{\rho} \rightarrow Y_{\t{\rho}}$ is an isometry. 

It is obvious that $\rho$ is a GH-density with constant $M$ if $\t{\rho}$ is a GH-density with constant $M$, since $\t{\rho}$ restricts to $\rho$ on $X$ and geodesics in $X$ are also geodesics in $Y$. For the converse we assume that $\rho$ is a GH-density with constant $M$ and let $x,y \in Y$ be given points. We need to prove the GH-inequality \eqref{first GH} for any geodesic $\gamma$ joining $x$ to $y$ in $Y$. 

If $x,y \in X$ then any geodesic $\gamma$ joining $x$ to $y$ in $Y$ is in fact a geodesic joining $x$ to $y$ in $X$. The inequality \eqref{first GH} for $\t{\rho}$  then follows from the corresponding inequality for $\rho$. If $x,y \in [0,\infty) = Y \backslash (X \backslash \{z\})$ then there is only one geodesic $\gamma$ from $x$ to $y$ in $Y$, which is simply the interval connecting them in $[0,\infty)$. Since any curve $\sigma$ joining $x$ to $y$ in $Y$ must contain this interval we in fact have $\l_{\t{\rho}}(\gamma) = d_{\t{\rho}}(x,y)$. Thus inequality \eqref{first GH} holds in this case as well. 

The final case is that in which $x \in X$ and $y \in [0,\infty)$. Let $\gamma$ be a geodesic joining $x$ to $y$ in $Y$. Then we can write $\gamma = \sigma \cup \eta$, where $\sigma$ is a geodesic in $X$ joining $x$ to $z$ and $\eta$ is the geodesic in $[0,\infty)$ joining $0$ to $y$, which is just the interval connecting these points. Given a rectifiable curve $\alpha: I\rightarrow Y$ joining $x$ to $y$ we let $\beta$ be the shortening of $\alpha$ to $X$. Then by the inequality \eqref{first GH} for $\rho$ we have $\l_{\rho}(\sigma) \leq M\l_{\rho}(\beta)$. Since the intersection $\alpha(I) \cap [0,\infty)$ must contain the interval $\eta$ joining $0$ to $y$, we deduce from this that
\begin{align*}
\l_{\t{\rho}}(\gamma) &= \l_{\t{\rho}}(\sigma) + \l_{\t{\rho}}(\eta) \\
&\leq M\left(\l_{\t{\rho}}(\beta) + \int_{\alpha^{-1}([0,\infty))} \t{\rho}\, ds\right) \\
&= M \l_{\t{\rho}}(\alpha). 
\end{align*}
Minimizing over all rectifiable curves $\alpha$ joining $x$ to $y$ then gives inequality \eqref{first GH}.
\end{proof}

We assume now that $X$ is a geodesic $\delta$-hyperbolic space. We let $Y$ be the ray augmentation of $X$ based at some point $z \in X$. It is an easy exercise to see that $Y$ is also $\delta$-hyperbolic; recall that we have defined $\delta$-hyperbolicity using $\delta$-thin triangles. We also note that $Y$ is complete if $X$ is complete. We will continue to use the generic distance notation $|xy|$ for the distance between $x,y \in Y$, noting that there is no conflict with the distance notation for $X$ since $X$ sits isometrically inside of $Y$.

By definition the \emph{canonical ray} in $Y$ is the geodesic ray $\gamma:[0,\infty) \rightarrow Y$ corresponding to the canonical parametrization of the copy of $[0,\infty)$ that we glued onto $X$. The key property of the ray augmentation is that the Busemann function $b_{\gamma}$ associated to the canonical ray restricts on $X$ to the distance from the distinguished point $z$. 

\begin{lem}\label{augment}
Let $\gamma$ be the canonical ray in $Y$. Then for $x \in X$ we have $b_{\gamma}(x) = |xz|$. 
\end{lem}

\begin{proof}
For $x \in X$ and $t \geq 0$ we have
\[
|\gamma(t)x|-t = t+|xz|-t = |xz|,
\]
which implies upon taking $t \rightarrow \infty$ that $b_{\gamma}(x) = |xz|$.
\end{proof}

Let $\omega \in \p Y$ be the point in the Gromov boundary defined by the canonical ray. We next show that $\p_{\omega}Y = \p Y \backslash \{\omega\}$ identifies canonically with $\p X$ and rough starlikeness from $z$ in $X$ passes over to rough starlikeness from $\omega$ in $Y$. 

\begin{lem}\label{ray boundary}
We have $\p_{\omega} Y = \p X$. Furthermore any visual metric on $\p X$ based at $z$ with parameter $\e > 0$ also defines a visual metric on $\p_{\omega} Y$ based at $\omega$ with parameter $\e$, and the converse holds as well. If $X$ is $K$-roughly starlike from $z$ then $Y$ is $K$-roughly starlike from $\omega$. 
\end{lem}

\begin{proof}
For the first assertion it suffices to show that if $\{x_{n}\}$ is a sequence converging to infinity in $Y$ then there is an $N \in \N$ such that for $n \geq N$ the points $x_{n}$ either all belong to $X$ or all belong to $[0,\infty)$. Recall that $\{x_{n}\}$ converges to infinity if we have $(x_{n}|x_{m})_{z} \rightarrow \infty$ as $m,n \rightarrow \infty$.  If our assertion did not hold then we could find subsequences $\{y_{n}\}$ and $\{z_{n}\}$ of the sequence $\{x_{n}\}$ such that $\{y_{n}\} \subset X$, $\{z_{n}\} \subset [0,\infty)$, and $(y_{n}|z_{n})_{z} \rightarrow \infty$ as $n \rightarrow \infty$. But then $|y_{n}z_{n}| = |y_{n}z| + |z_{n}z|$ and therefore
\[
(y_{n}|z_{n})_{z} = \frac{1}{2}(|y_{n}z|+|z_{n}z|-|y_{n}z_{n}|) = 0,
\]
contradicting our assumption that $(y_{n}|z_{n})_{z} \rightarrow \infty$. Since all sequences $\{x_{n}\}$ converging to infinity in $Y$ that belong exclusively to $[0,\infty)$ must converge to $\omega$, it follows that we have a canonical identification of $\p X$ with $\p_{\omega} Y$. 

Let $b_{\gamma}$ be the Busemann function associated to the canonical ray $\gamma$ as in Lemma \ref{augment}. We observe that $b_{\gamma}(x) = |xz|$ for $x \in X$ implies that $(x|y)_{b_{\gamma}} = (x|y)_{z}$ for $x,y \in X$. Since any sequence $\{x_{n}\}$ converging to infinity in $Y$ that does not converge to $\omega$ must eventually stay within $X$, it follows that $(\xi|\zeta)_{b_{\gamma}} = (\xi|\zeta)_{z}$ for $\xi,\zeta \in \p X$. Thus through our identification $\p_{\omega} Y = \p X$ we have a canonical identification between visual metrics on $\p_{\omega} Y$ based at $\omega$ and visual metrics on $\p X$ based at $z$ for any parameter $\e > 0$. 

Observe that if $\sigma: [0,\infty) \rightarrow X$ is a geodesic ray starting at $z$ then the map $\t{\sigma}: \R \rightarrow Y$ defined by $\t{\sigma}(t) = \sigma(t)$ for $t \geq 0$ and $\t{\sigma}(t) = -t \in [0,\infty)$ for $t \leq 0$ defines a geodesic line in $Y$ that begins at $\omega$, coincides with $\sigma$ inside of $X$, and has the same endpoint in $\p X = \p_{\omega} Y$ as $\sigma$. This implies that if $X$ is $K$-roughly starlike from $z$ for some $K \geq 0$ then $Y$ is $K$-roughly starlike from $\omega$.
\end{proof}

We can now complete the proofs of Theorems \ref{unbounded uniformization} and \ref{identification theorem} by showing that they also hold for $b \in \mathcal{D}(X)$.

\begin{prop}\label{bound from unbound}
Theorems \ref{unbounded uniformization} and \ref{identification theorem} hold for $b \in \mathcal{D}(X)$.
\end{prop}

\begin{proof}
Let $X$ be a complete geodesic $\delta$-hyperbolic space that is $K$-roughly starlike from $z \in X$ and let $b \in \mathcal{D}(X)$ have the form $b(x) = |xz| + s$ for some $s \in \R$. We assume that $\e > 0$ is such that $\rho_{\e,b}$ is a GH-density with constant $M$. We let $Y = X \cup_{z \sim 0} [0,\infty)$ be the ray augmentation of $X$ based at $z$, let $\gamma$ be the canonical ray in $Y$, and set $\t{b} = b_{\gamma}+s$. Then $Y$ is a complete geodesic $\delta$-hyperbolic space that is $K$-roughly starlike from the endpoint $\omega \in \p Y$ of the canonical ray by Lemma \ref{ray boundary}. We have $\t{b}|_{X} = b$ by Lemma \ref{augment}. Thus by Lemma \ref{transfer admissible} the embedding $X_{\e,b} \rightarrow Y_{\e,\t{b}}$ is isometric and $\rho_{\e,\t{b}}$ is a GH-density with the same constant $M$. We can then apply Theorems \ref{unbounded uniformization} and \ref{identification theorem} to $Y$ equipped with the Busemann function $\t{b}$. 

For $t \in [0,\infty) \subset Y$ we have $\t{b}(t) = -t+s$. Thus a straightforward calculation shows that $\l_{\e}(\gamma) = \infty$. It follows from this and the GH-inequality \eqref{first GH} that the only boundary points of the metric space $Y_{\e,\t{b}}$ are the boundary points of $X_{\e,b}$, i.e., $\p X_{\e,b} = \p Y_{\e,\t{b}}$. By applying  Theorem \ref{identification theorem} to $Y$ and then using Lemma \ref{ray boundary}, we conclude that we have a canonical identification $\varphi_{\e}:\p X \rightarrow \p X_{\e,b}$. Since visual metrics on $\p X$ based at $z$ with parameter $\e$ correspond to visual metrics on $\p_{\omega}Y$ based at $\omega$ with the same parameter $\e$, it then follows from Theorem \ref{identification theorem} that the restriction of $d_{\e,b}$ to $\p X_{\e,b}$ defines a visual metric on $\p_{\omega}X$ with parameter $\e$ and comparison constant $L = L(\delta,K,\e,M)$. This completes the proof of Theorem \ref{identification theorem}. 

For Theorem \ref{unbounded uniformization} we observe that any geodesic $\sigma$ in $X$ is also a geodesic in $Y$. Since $X_{\e,b}$ sits isometrically inside of $Y_{\e,\t{b}}$ and $\p X_{\e,b} = \p Y_{\e,\t{b}}$, by applying Theorem \ref{unbounded uniformization} to $Y$ we conclude that $\sigma$ is an $A$-uniform curve in $Y_{\e,\t{b}}$ and therefore also an $A$-uniform curve in $X_{\e,b}$. This completes the proof of Theorem \ref{unbounded uniformization} in this case.
\end{proof}

\begin{rem}\label{transfer estimates}
The proof of Proposition \ref{bound from unbound} also shows that the estimates of Lemmas \ref{lem:estimate both} and \ref{compute distance} hold for $b \in \mathcal{D}(X)$ as well. In the notation of the proof, this is because $\t{b}|_{X} = b$, $X_{\e,b}$ isometrically embeds into $Y_{\e,\t{b}}$, and we have an identification of boundaries $\p X_{\e,b} = \p Y_{\e,\t{b}}$. 
\end{rem}

\section{Hyperbolic fillings}\label{sec:filling} 
Let $(Z,d)$ be a metric space and let $\alpha,\tau > 1$ be given parameters. We recall the construction of a hyperbolic filling $X$ of $Z$ with these parameters described prior to Theorem \ref{filling theorem}. For each $n \in \Z$ we select a maximal $\alpha^{-n}$-separated subset $S_{n}$ of $Z$. The existence of such a set is guaranteed by a standard application of Zorn's lemma. Then for each $n \in \Z$ the balls $B(z,\alpha^{-n})$, $z \in S_{n}$, cover $Z$.

The vertex set of $X$ has the form 
\[
V = \bigcup_{n \in \Z} V_{n}, \;\;\; V_{n} = \{(x,n):x \in S_{n}\}.
\]
To each vertex $v = (x,n)$ we associate the dilated ball $B(v) = B(x,\tau \alpha^{-n})$. We will often use $v$ to denote both a vertex in $X$ and its associated point in $Z$. We also define the \emph{height function} $h: V \rightarrow \Z$ by $h(x,n) = n$. By construction for each $z \in Z$ there is a $v \in V_{n}$ such that $\rho(v,z) < \alpha^{-n}$. 
 
We place an edge in $X$ between distinct vertices $v$ and $w$ if and only if $|h(v)-h(w)| \leq 1$ and $B(v) \cap B(w) \neq \emptyset$. Thus there is an edge between vertices if and only if they are of the same or adjacent height and there is a nonempty intersection of their associated balls. For vertices $v,w$ we write $v \sim w$ if there is an edge between $v$ and $w$.  Edges between vertices of the same height are referred to as \emph{horizontal}, and edges between vertices of different heights are called \emph{vertical}. We say that an edge path between two vertices is \emph{vertical} if it is composed exclusively of vertical edges. 

While we will allow any choice of $\alpha > 1$, we will need to place some constraints on the values of the parameter $\tau$ based on $\alpha$. We will require that
\begin{equation}\label{tau requirement}
\tau > \max\left\{3,\frac{\alpha}{\alpha-1}\right\}.
\end{equation}
We give each connected component of $X$ the unique geodesic metric in which all edges have unit length. The restriction \eqref{tau requirement} will be used in Proposition \ref{connected filling} to show that $X$ is actually connected and is therefore a geodesic metric space itself. For applications a standard choice of parameters satisfying \eqref{tau requirement} is given by $\alpha = 2$ and $\tau = 4$.

\begin{rem}\label{idk}We do not know whether the constraint \eqref{tau requirement} can be relaxed while preserving the properties of $X$ described below. In particular we do not know whether $X$ is always Gromov hyperbolic or even connected for all $\tau > 1$. However, by applying Lemma \ref{height connection} below it is easy to see that $X$  is connected for any $\tau > 1$ when $Z$ is bounded. We note that one cannot take $\tau = 1$ in the construction as it is possible for the resulting graph to fail to be Gromov hyperbolic even in the bounded case \cite[Example 8.8]{BBS21}.
\end{rem}

Since edges can only connect vertices of the same or adjacent heights, all vertical edge paths are geodesics in $X$. We will refer to these vertical paths as \emph{vertical geodesics}. We will use the generic distance notation $|xy|$ for the distance between $x,y \in X$. Thus for $v = (x,n),w=(y,n) \in V$ we will denote their distance in $X$ by $|vw|$ and their distance in $Z$ by $d(v,w):=d(x,y)$. Identifying an edge $g$ from a vertex $v$ to a vertex $w$ isometrically with $[0,1]$, we extend the height function $h$ to $g$ by $h(s) = sh(v)+(1-s)h(w)$. Then $h$ defines a function $h: X \rightarrow \R$ that is $1$-Lipschitz on the connected components of $X$.
 
We begin with a simple lemma. 

\begin{lem}\label{height connection}
Let $v,w \in V$ with $h(v) \neq h(w)$ and $B(v) \cap B(w) \neq \emptyset$. Then there is a vertical edge path from $v$ to $w$.
\end{lem}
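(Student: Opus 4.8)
The plan is to induct on $|h(v)-h(w)|$, so it suffices to handle the base case $|h(v)-h(w)|=1$: once we know that whenever two vertices of adjacent height have intersecting associated balls they are joined by a (single) vertical edge, the general case follows by producing intermediate vertices at each height between $h(v)$ and $h(w)$. Without loss of generality assume $m:=h(w)=h(v)+1=:n+1$ (after relabeling), so $B(v)=B(v,\tau a^{n})$ and $B(w)=B(w,\tau a^{n+1})$ with $a^{n+1}<a^{n}$. By hypothesis there is a point $z\in B(v)\cap B(w)$.

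First I would reduce the inductive step to the base case. Suppose $h(v)=n$ and $h(w)=n+k$ with $k\ge 2$ (the case $h(w)<h(v)$ is symmetric). Since $z\in B(v)\cap B(w)$ we have $d(v,z)<\tau a^{n}$; using maximality of the separated sets $S_{n+1},\dots,S_{n+k-1}$, for each intermediate level $j$ with $n<j<n+k$ the balls $B(x,a^{j})$, $x\in S_{j}$, cover $Z$, so we may pick $v_{j}\in S_{j}$ with $d(v_{j},z)<a^{j}$; set $v_{n}=v$, $v_{n+k}=w$. The key estimate is that consecutive members of this chain have intersecting dilated balls: for $n<j<n+k$ we have $d(v_{j},v_{j+1})\le d(v_{j},z)+d(z,v_{j+1})<a^{j}+a^{j+1}<\tau a^{j+1}$ (using $\tau>1+a$, which follows from $\tau>\frac{1}{1-a}$ and hence from \eqref{tau requirement}), so $z$-proximity gives $B(v_{j})\cap B(v_{j+1})\ne\emptyset$; and similarly $d(v_{n},v_{n+1})\le d(v,z)+d(z,v_{n+1})<\tau a^{n}+a^{n+1}$, which shows $z$ lies in no contradiction — more carefully, since $z\in B(v)$ already and $d(z,v_{n+1})<a^{n+1}<\tau a^{n+1}$, the point $z$ witnesses $B(v_{n})\cap B(v_{n+1})\ne\emptyset$; the same argument at the top end handles $v_{n+k-1}$ and $v_{n+k}=w$ since $z\in B(w)$. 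Thus each consecutive pair $v_{j},v_{j+1}$ has adjacent height and intersecting dilated balls, so by the base case they are joined by a vertical edge, and concatenating gives the desired vertical edge path from $v$ to $w$.

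For the base case itself there is essentially nothing to prove beyond unwinding the definition: vertices of adjacent height whose dilated balls meet are by definition joined by an edge of $X$, and that edge is vertical since the heights differ. So the only genuine content is the chain construction above, and the genuine content there is checking that the triangle-inequality estimates on $d(v_{j},v_{j+1})$ really do force nonempty intersection of the dilated balls at the correct radii — this is where the constraint $\tau>\frac{1}{1-a}$ (equivalently $\tau(1-a)>1$, i.e.\ $\tau a^{j}+\tau a^{j+1}\ge\dots$) gets used, and one should be slightly careful that it is the point $z$ common to $B(v)$ and $B(w)$ that propagates down the chain rather than trying to chain pairwise intersections of unrelated balls.

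The main obstacle I anticipate is purely bookkeeping: making sure the radii line up so that at each step the witness point (always $z$) lies in \emph{both} dilated balls $B(v_{j})=B(v_{j},\tau a^{j})$ and $B(v_{j+1})=B(v_{j+1},\tau a^{j+1})$ of the adjacent-height pair — for the lower vertex this is immediate since $d(v_{j},z)<a^{j}<\tau a^{j}$, and for the upper vertex $d(v_{j+1},z)<a^{j+1}<\tau a^{j+1}$, so in fact no constraint on $\tau$ beyond $\tau>1$ is needed for this lemma at all; the constraint \eqref{tau requirement} is invoked elsewhere (e.g.\ to get connectedness of $X$ in Proposition \ref{connected filling}, where one must also move \emph{horizontally}). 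I would state the proof cleanly as: pick $v_{j}$ with $d(v_{j},z)<a^{j}$ at each intermediate level, observe $z\in B(v_{j})\cap B(v_{j+1})$ for all $j$, invoke the definition of edges in $X$, and concatenate.
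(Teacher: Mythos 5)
Your final clean argument --- pick $v_{j}\in S_{j}$ with $d(v_{j},z)<a^{j}$ at each intermediate height, observe that the common point $z$ then lies in every dilated ball $B(v_{j})$ as well as in $B(v)$ and $B(w)$, so consecutive vertices have adjacent heights and intersecting balls and are hence joined by vertical edges --- is exactly the paper's proof, and your observation that only $\tau>1$ is needed for this lemma is correct. (The intermediate estimate $d(v_{j},v_{j+1})<a^{j}+a^{j+1}<\tau a^{j+1}$ in your draft would actually require $\tau>1+a^{-1}$, but you correctly abandon it in favor of the witness-point argument, so the final version has no gap.)
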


\begin{proof}
Let $v = (x,m)$, $w = (y,n)$, and let $z \in B(v) \cap B(w)$. We can assume without loss of generality that $m < n$. For each integer $m \leq k \leq n$ we can find a vertex $v_{k} \in V_{k}$ with $z \in B(v_{k})$; we set $v_{m} = v$ and $v_{n} = w$. Then $v_{k} \sim v_{k+1}$ for each $m \leq k < n$ by the construction of the graph $X$.  It follows that $v$ is connected to $w$ by a vertical edge path passing through the vertices $v_{k}$. 
\end{proof}

The next lemma estimates the distance in $Z$ between vertices in $X$ that are connected by a vertical edge path. 

\begin{lem}\label{geometric series}
Let $v, w \in V$. Suppose that $v$ is joined to $w$ by a vertical edge path and $h(v) \leq h(w)$. Then
\[
d(v,w) \leq \frac{2\tau \alpha^{-h(v)+1}}{\alpha-1}.
\]
\end{lem}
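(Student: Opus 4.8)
The plan is to bound the distance $d(v,w)$ by summing the edge-by-edge contributions along the vertical path and recognizing the result as a geometric series. Write the vertical edge path as $v = v_0 \sim v_1 \sim \cdots \sim v_k = w$. Since the path is vertical, consecutive vertices differ in height by exactly $1$, and since $h(v) \le h(w)$ we may as well assume the heights are strictly increasing along the path: $h(v_j) = h(v) + j$ for $0 \le j \le k$ (if the path were not monotone one could first extract a monotone sub-path using Lemma \ref{height connection}, but in fact a vertical edge path of length $k$ between vertices whose heights differ by $k$ must already be monotone, so no extraction is needed).

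Next I would estimate each single step. For each $j$, the edge $v_j \sim v_{j+1}$ means $B(v_j) \cap B(v_{j+1}) \ne \emptyset$, so picking a point $z$ in the intersection and applying the triangle inequality in $Z$ gives
\[
d(v_j, v_{j+1}) \le \tau a^{h(v_j)} + \tau a^{h(v_{j+1})} = \tau a^{h(v)+j} + \tau a^{h(v)+j+1} \le 2\tau a^{h(v)+j}.
\]
Then the triangle inequality along the whole path yields
\[
d(v,w) \le \sum_{j=0}^{k-1} d(v_j, v_{j+1}) \le \sum_{j=0}^{k-1} 2\tau a^{h(v)+j} \le 2\tau a^{h(v)} \sum_{j=0}^{\infty} a^j = \frac{2\tau a^{h(v)}}{1-a},
\]
which is exactly the claimed bound.

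There is no real obstacle here; this is a routine computation. The only point deserving a moment of care is the claim that a vertical edge path between vertices of heights $m$ and $n$ with $m \le n$ can be taken to have strictly monotone heights of length $n - m$ — but since vertical edges change height by exactly $1$ and the endpoints' heights differ by $n - m$, the path (after deleting any backtracking, if one insists on allowing it in the definition) has length at least $n-m$ and the monotone case is the relevant one; in any event the crude bound $d(v_j,v_{j+1}) \le 2\tau a^{\min(h(v_j),h(v_{j+1}))}$ together with the fact that the heights visited below any level are a bounded set already suffices, so the geometric series argument goes through regardless of the precise convention. I would state the monotone case explicitly for cleanliness.
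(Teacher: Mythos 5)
Your argument is essentially the paper's proof: bound each single vertical edge by $2\tau a^{\,\text{(lower height)}}$ using a point of $B(v_j)\cap B(v_{j+1})$, then sum the geometric series, and the paper likewise treats the vertical path as monotone (one vertex per height between $h(v)$ and $h(w)$), so your monotonicity caveat matches its implicit convention. Correct, same approach.
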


\begin{proof}
We first derive a sharper inequality in the case $h(w) = h(v)+1$. Set $h(v) = m$. Let $x \in B(v) \cap B(w)$. Then 
\[
d(v,w) \leq d(x,v) + d(x,w) < \tau \alpha^{-m} + \tau \alpha^{-m-1} < 2\tau \alpha^{-m}.
\]
Now let $h(v) = m$, $h(w) = n$. For each $m \leq k \leq n$ we let $v_{k} \in V_{k}$ be the vertex at this height in the vertical edge path joining $v$ to $w$. Then by the ``$h(w) = h(v) + 1$" case we have
\[
d(v,w) \leq \sum_{k=m}^{n-1}d(v_{k},v_{k+1}) \leq 2\tau \alpha^{-m}\sum_{k=0}^{n-m-1}\alpha^{-k} \leq \frac{2\tau \alpha^{-m+1}}{\alpha-1},
\] 
with the final inequality following by summing the geometric series in $\alpha^{-1}$. 
\end{proof}

Following the hyperbolic filling construction in \cite{BS07}, we define a \emph{cone point} $u \in V$ for a pair of vertices $\{v,w\} \subseteq V$ to be a vertex that can be joined to both $v$ and $w$ by vertical geodesics and that satisfies $h(u) \leq \min\{h(v),h(w)\}$. A \emph{branch point} for $\{v,w\}$ is defined to be a cone point of maximal height. A branch point for $\{v,w\}$ always exists as long as there is at least one cone point for $\{v,w\}$. When $v = w$ the vertex $v$ is trivially a branch point for the set $\{v\}$. 

\begin{lem}\label{cone adjacent}
Let $v,w \in V_{n}$ be distinct vertices with $v \sim w$. Then there is a branch point $u \in V_{n-1}$ for the set $\{v,w\}$. 
\end{lem}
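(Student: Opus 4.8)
The plan is to produce the cone point $u \in V_{n-1}$ directly by finding a suitable ball at height $n-1$ that contains points witnessing the edge $v \sim w$. Write $v = (x,n)$ and $w = (y,n)$ with $x,y \in S_n$. Since $v \sim w$ and $v \neq w$, there is a point $z \in B(v) \cap B(w) = B(x,\tau a^n) \cap B(y,\tau a^n)$. Because $S_{n-1}$ is a maximal $a^{n-1}$-separated set, the balls $B(p, a^{n-1})$ with $p \in S_{n-1}$ cover $Z$, so I can choose $p \in S_{n-1}$ with $d(z,p) < a^{n-1}$; set $u = (p, n-1) \in V_{n-1}$.

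Next I would check that $u$ is adjacent to both $v$ and $w$, hence (being of strictly smaller height and connected to each by a single vertical edge, which is a vertical geodesic) a cone point for $\{v,w\}$. For the edge $u \sim v$: I need $B(u) \cap B(v) \neq \emptyset$, and I claim $z$ itself lies in $B(u) = B(p, \tau a^{n-1})$, since $d(z,p) < a^{n-1} \leq \tau a^{n-1}$ using $\tau > 1$. Also $z \in B(v)$ by choice of $z$. Since $|h(u) - h(v)| = 1$, this gives $u \sim v$, and symmetrically $u \sim w$. Thus $u$ is joined to $v$ and to $w$ by vertical edges, and $h(u) = n-1 < n = \min\{h(v), h(w)\}$, so $u$ is a cone point for $\{v,w\}$.

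Finally I would argue that this cone point is in fact a branch point, i.e. of maximal height among cone points. A cone point $u'$ for $\{v,w\}$ must satisfy $h(u') \leq \min\{h(v),h(w)\} = n$; I need to rule out $h(u') = n$. If $h(u') = n$, then $u'$ is joined to $v$ by a vertical geodesic whose endpoints both have height $n$; since vertical edges connect vertices of adjacent heights and a vertical path's height is monotone along the path, a vertical geodesic between two vertices of equal height must be the constant path, forcing $u' = v$, and likewise $u' = w$, contradicting $v \neq w$. Hence every cone point has height at most $n-1$, so the cone point $u \in V_{n-1}$ constructed above is a branch point. The only subtlety — and the one place to be slightly careful — is the monotonicity claim for vertical geodesics and the resulting rigidity at equal heights; this is immediate from the definition of vertical edges (heights differ by exactly $1$ on each vertical edge, so a vertical path from height $n$ to height $n$ has length $0$), so there is no real obstacle here.
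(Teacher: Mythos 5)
Your construction is the same as the paper's: take $z \in B(v)\cap B(w)$, use maximality of $S_{n-1}$ to find $u=(p,n-1)$ with $d(z,p)<a^{n-1}$, and check that $u$ is joined to $v$ and $w$ by vertical edges, hence is a cone point at the adjacent level and therefore a branch point. The one small but genuine difference is the adjacency check: the paper verifies $d(v,u)<\tau a^{n-1}$, i.e.\ $v\in B(u)$, which is exactly where the constraint \eqref{tau requirement} (equivalently $\tau a+1<\tau$) is used, whereas you simply observe that $z$ itself lies in $B(u)\cap B(v)$, which needs only $\tau>1$; both are correct, and your variant shows this particular lemma does not require the full strength of \eqref{tau requirement}. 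Your explicit maximality argument (which the paper dismisses as trivial) is also fine, with the caveat that it relies on the convention that vertical edge paths are monotone in height; this is the intended reading of the definition (otherwise the paper's assertion that vertical edge paths are geodesics would fail, and the later definition of vertical sequences makes monotonicity explicit), and under it a vertical geodesic between two vertices of equal height is constant, so no cone point for $\{v,w\}$ can sit at height $n$.
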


\begin{proof}
The assumptions imply that $B(v) \cap B(w) \neq \emptyset$. Let $z \in B(v)\cap B(w)$ be a point in this intersection. Since $V_{n-1}$ is a maximal $\alpha^{-n+1}$-separated set in $Z$ we can find $u \in V_{n-1}$ such that $d(u,z) < \alpha^{-n+1}$. We compute
\[
d(v,u) \leq d(v,z) + d(z,u) < \tau \alpha^{-n} + \alpha^{-n+1} < \tau \alpha^{-n+1},
\]
by inequality \eqref{tau requirement}, noting that the final inquality here is equivalent to
\[
\tau  + \alpha < \tau \alpha,
\]
which is implied by \eqref{tau requirement}. It follows that $v \in B(u)$ and therefore $B(v) \cap B(u) \neq \emptyset$. Thus $v$ is joined to $u$ by a vertical edge. Since the roles of $v$ and $w$ are symmetric, we conclude by the same calculation that $B(w) \cap B(u) \neq \emptyset$, i.e., $w$ is also joined to $u$ by a vertical edge. Thus $u$ is a cone point for $\{v,w\}$. Since a cone point for a pair of distinct vertices on an adjacent level is trivially maximal, we conclude that $u$ is a branch point for $\{v,w\}$. 
\end{proof}

We can now show that the graph $X$ is connected. 

\begin{prop}\label{connected filling}
For each $v,w \in V$ there is a branch point $u$ for the set $\{v,w\}$ that satisfies 
\begin{equation}\label{branch comparison}
\alpha^{-h(u)} \asymp_{C(\alpha,\tau)} d(v,w) + \alpha^{-\min\{h(v),h(w)\}}.
\end{equation}
Consequently the graph $X$ is connected.
\end{prop}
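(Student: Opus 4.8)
The plan is to prove the comparison \eqref{branch comparison} directly and then read off connectedness. Write $m = \min\{h(v),h(w)\}$ and assume without loss of generality that $h(v) = m \le h(w)$; let $x,y \in Z$ be the points carried by $v,w$, so that $d(v,w) = d(x,y)$. I would first establish the existence of a cone point (hence of a branch point $u$) by splitting on the size of $d(v,w)$ relative to $a^m$. If $d(v,w) < \tau a^m$ then $y \in B(v)$, so $B(v) \cap B(w) \ne \emptyset$; when $h(v) < h(w)$, Lemma~\ref{height connection} joins $v$ to $w$ by a vertical edge path and $v$ is itself a cone point of height $m$, and when $h(v) = h(w)$ with $v \ne w$, Lemma~\ref{cone adjacent} supplies a branch point of height $m-1$ (the case $v = w$ being trivial). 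If instead $d(v,w) \ge \tau a^m$, I would take $k^{*}$ to be the largest integer with $(\tau-1)a^{k^{*}} \ge d(v,w)$; since $d(v,w) \ge \tau a^m > (\tau-1)a^m$ one checks $k^{*} \le m-1$. Using that $S_{k^{*}}$ is a maximal $a^{k^{*}}$-separated set, choose $u' \in S_{k^{*}}$ with $d(u',x) < a^{k^{*}}$; then $d(u',y) \le d(u',x) + d(x,y) < a^{k^{*}} + (\tau-1)a^{k^{*}} = \tau a^{k^{*}}$, so $x,y \in B(u')$ and $B(u')$ meets both $B(v)$ and $B(w)$. As $k^{*} < m$, Lemma~\ref{height connection} joins $u'$ to $v$ and to $w$ by vertical edge paths, which are vertical geodesics, so $u'$ is a cone point; hence a branch point $u$ exists.

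For the estimate, the lower bound on $a^{h(u)}$ is uniform: any branch point $u$ has $h(u) \le m \le h(v), h(w)$ and is joined to $v$ and to $w$ by vertical edge paths, so Lemma~\ref{geometric series} gives $d(u,x), d(u,y) \le \tfrac{2\tau}{1-a}a^{h(u)}$, whence $d(v,w) \le \tfrac{4\tau}{1-a}a^{h(u)}$; combined with $a^m \le a^{h(u)}$ this yields $d(v,w) + a^m \le \bigl(\tfrac{4\tau}{1-a}+1\bigr)a^{h(u)}$. For the upper bound I would use maximality of the height of the branch point together with the cone points just produced: in the first case $h(u) \ge m-1$, so $a^{h(u)} \le a^{-1}a^m \le a^{-1}(d(v,w)+a^m)$, and in the second $h(u) \ge k^{*}$, so $a^{h(u)} \le a^{k^{*}} = a^{-1}a^{k^{*}+1} \le a^{-1}(\tau-1)^{-1}d(v,w) \le a^{-1}(d(v,w)+a^m)$, using $\tau - 1 > 1$. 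Together these give \eqref{branch comparison} with $C = C(a,\tau)$.

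Connectedness is then immediate: for any $v,w \in V$ the branch point $u$ is joined to both by edge paths, so $v$ and $w$ lie in the same component; and since each point of $X$ lies on an edge, it is joined to a vertex, so $X$ is connected.

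The step I expect to be the main obstacle is the upper bound $a^{h(u)} \le C(d(v,w)+a^m)$: a cone point joined to $v$ by a vertical geodesic of \emph{unbounded} length could a priori sit far below level $m$, so this bound genuinely requires working with the maximal-height cone point and with the explicit cone point from the construction above in order to pin $h(u)$ down from below. A smaller point to be careful about is that in a general (non-geodesic) metric space $B(v) \cap B(w) = \emptyset$ does not force $d(v,w)$ to be large; the dichotomy above sidesteps this by noting that $d(v,w) < \tau a^m$ already implies $y \in B(v)$.
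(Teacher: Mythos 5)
Your proof is correct, and it follows the same overall strategy as the paper's: manufacture a cone point at height comparable to $\log_a\bigl(d(v,w)+a^{\min\{h(v),h(w)\}}\bigr)$ using the maximality of the separated sets $S_k$, get $d(v,w)+a^{m}\leq C\,a^{h(u)}$ from Lemma \ref{geometric series} plus the triangle inequality, and get the reverse bound from the maximality of the branch point's height. The differences are in the details, and they are worth noting. For the deep cone point the paper fixes a level $k\leq m$ with $a^{k}>d(v,w)$, takes two vertices $p,q\in V_{k}$ near $v$ and $w$, shows $p\sim q$ (this is where $\tau>3$ enters, via $3a^{k}<\tau a^{k}$), and then descends one further level with Lemma \ref{cone adjacent}; you instead take a single vertex $u'\in S_{k^{*}}$ at the largest level with $(\tau-1)a^{k^{*}}\geq d(v,w)$ and observe that $B(u')$ already contains both underlying points, so $u'$ is itself a cone point by Lemma \ref{height connection}. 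Choosing $k^{*}$ maximal also makes the upper bound $a^{h(u)}\leq C\,(d(v,w)+a^{m})$ immediate, whereas the paper re-derives the comparison $d(v,w)\asymp a^{k}$ in its second case. Finally, your dichotomy $d(v,w)<\tau a^{m}$ versus $d(v,w)\geq\tau a^{m}$, with the equal-height adjacent case dispatched explicitly through Lemma \ref{cone adjacent}, is a bit cleaner than the paper's split on whether $v$ and $w$ are joined by a vertical edge path: in the corner case $h(v)=h(w)$ with $v\sim w$, the vertex $v$ is not literally a cone point for $\{v,w\}$ (no vertical geodesic joins distinct vertices of equal height), and your treatment, which only needs $h(u)\geq m-1$ there, avoids that wrinkle while yielding the same constant $C=C(a,\tau)$.
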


\begin{proof}
Let $v \in V_{m}$, $w \in V_{n}$ be given. We can assume without loss of generality that $m \leq n$. If $v = w$ then $v$ is a branch point for the set $\{v\}$ and the comparison \eqref{branch comparison} holds trivially. We can thus assume for the rest of the proof that $v \neq w$. We let $k \in \Z$ be any integer satisfying $\alpha^{-k} > d(v,w)$ and $k\leq m$; note that such an integer always exists since $\alpha^{-k} \rightarrow \infty$ as $k \rightarrow -\infty$. Let $p \in V_{k}$ be a vertex such that $d(v,p) < \alpha^{-k}$ and let $q \in V_{k}$ be a vertex such that $d(q,w) < \alpha^{-k}$. Then
\[
d(p,q) \leq d(v,p) + d(v,w) + d(w,q) < 3\alpha^{-k} < \tau \alpha^{-k},
\]
by \eqref{tau requirement}. Thus $q \in B(p)$, so $B(p) \cap B(q) \neq \emptyset$.  We conclude that $p \sim q$. By Lemma \ref{cone adjacent} we can then find a branch point $x \in V_{k-1}$ for the set $\{p,q\}$. Since $B(p) \cap B(v) \neq \emptyset$ and $B(q) \cap B(w) \neq \emptyset$, Lemma \ref{height connection} shows that $p$ and $q$ are connected to $v$ and $w$ respectively by vertical edge paths, and the requirement $k \leq m$ implies that $\max\{h(p),h(q)\} \leq \min\{h(v),h(w)\}$. Since $p$ and $q$ are each connected to $x$ by a vertical edge, we conclude that $x$ is a cone point for the set $\{v,w\}$. 

It follows that there is a branch point $u$ for the set $\{v,w\}$. Since $u$ is joined to $v$ and $w$ by vertical edge paths, the triangle inequality and Lemma \ref{geometric series} implies that
\[
d(v,w) \leq 2\max\{d(v,u),d(w,u)\} \leq C(\alpha,\tau)\alpha^{-h(u)}.  
\]
Since $h(u) \leq m$, we have $\alpha^{-m} \leq \alpha^{-h(u)}$ and therefore
\[
d(v,w) + \alpha^{-m}  \leq C(\alpha,\tau)\alpha^{-h(u)} + \alpha^{-m}  \leq C(\alpha,\tau)\alpha^{-h(u)},
\]
which gives the lower bound in the comparison \eqref{branch comparison}. 

For the upper bound in \eqref{branch comparison} we split into two cases. The first case is that in which $B(v) \cap B(w) \neq \emptyset$. If $h(v) = h(w)$ then this implies that $v \sim w$ and Lemma \ref{cone adjacent} implies that $h(u) = h(v)-1$. The upper bound follows immediately from this, as we then have
\[
d(v,w) + \alpha^{-\min\{h(v),h(w)\}} \geq \alpha^{-h(v)} = \alpha^{-1}\alpha^{-h(u)}. 
\]
If $h(v) \neq h(w)$ then by Lemma \ref{height connection} $v$ can be joined to $w$ by a vertical edge path. In this case $v$ is a branch point for the set $\{v,w\}$ and the inequality 
\[
d(v,w) +\alpha^{-h(v)} \geq \alpha^{-h(u)},
\]
holds trivially for $u = v$. 

The second case is that in which have $B(v) \cap B(w) = \emptyset$. This implies in particular that we must have $w \notin B(v)$. Thus $d(v,w) \geq \tau \alpha^{-m} > 0$. Let $k \in \Z$ be the maximal integer such that $k \leq m$ and $\alpha^{-k} > d(v,w)$. Then either $k = m$ or $d(v,w) \geq \alpha^{-k-1}$. Since $\alpha^{-k} > d(v,w)$ and $d(v,w) \geq \tau \alpha^{-m}$, we conclude in both cases that $d(v,w) \asymp_{C(\alpha,\tau)} \alpha^{-k}$. Making this choice of $k$ in the construction of $x$ above, we can thus construct a cone point $x$ for the set $\{v,w\}$ with $h(x) = k-1$ and therefore
\[
\alpha^{-h(x)} \asymp_{C(\alpha,\tau)} d(v,w). 
\]
Since the branch point $u$ satisfies $h(u) \geq h(x)$ it follows that
\[
\alpha^{-h(u)} \leq C(\alpha,\tau)d(v,w) \leq C(\alpha,\tau) (d(v,w) + \alpha^{-m}). 
\]
The upper bound in \eqref{branch comparison} follows.

Lastly, since we can connect $v$ to $w$ through the branch point $u$, it follows that $v$ and $w$ can be connected by an edge path in the graph $X$. Since $v$ and $w$ were arbitrary we conclude that $X$ is connected. 
\end{proof}

Now that we've shown $X$ is connected, the metrics we put on its connected components give it the structure of a geodesic metric space in which all edges of $X$ have unit length. The height function then defines a $1$-Lipschitz function $h: X \rightarrow \R$. We formally define the Gromov product based at $h$ by, for $x,y \in X$,
\[
(x|y)_{h} = \frac{1}{2}(h(x)+h(y)-|xy|). 
\]
Since $h$ is $1$-Lipschitz we have
\begin{equation}\label{lip height}
(x|y)_{h} \leq \min\{h(x),h(y)\}.
\end{equation}
Our next lemma gives a key relation of the Gromov product based at $h$ to branch points. 

\begin{lem}\label{branch estimate}
Let $v,w \in V$ and let $u$ be a branch point for $\{v,w\}$. Then 
\[
h(u) \doteq_{c(\alpha,\tau)} (v|w)_{h},
\]
and therefore 
\[
\alpha^{-(v|w)_{h}} \asymp_{C(\alpha,\tau)} d(v,w)+\alpha^{-\min\{h(v),h(w)\}}. 
\]
\end{lem}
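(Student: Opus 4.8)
The plan is to establish the estimate $h(u) \doteq_{c(a,\tau)} (v|w)_{h}$ first; the asymptotic comparison then follows at once, since exponentiating gives $a^{(v|w)_{h}} \asymp_{C(a,\tau)} a^{h(u)}$, and $a^{h(u)} \asymp_{C(a,\tau)} d(v,w) + a^{\min\{h(v),h(w)\}}$ by Proposition~\ref{connected filling} combined with the fact that every branch point for $\{v,w\}$ has the same height, namely the maximal height of a cone point for $\{v,w\}$. To prove $h(u) \doteq_{c} (v|w)_{h}$ I will establish the chain
\[
\log_{a} D - c(a,\tau) \;\le\; h(u) \;\le\; (v|w)_{h} \;\le\; \log_{a} D + c(a,\tau),
\]
where $D := d(v,w) + a^{\min\{h(v),h(w)\}}$ and $\log_{a}$ denotes the (decreasing) base-$a$ logarithm, so that $\log_{a} a^{s} = s$. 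The outer terms differ by $2c(a,\tau)$ and $(v|w)_{h} - h(u) \ge 0$, which yields the claim.

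The first two inequalities of the chain are easy. For $h(u) \le (v|w)_{h}$: being a cone point, $u$ is joined to $v$ and to $w$ by vertical geodesics, whose lengths are $h(v)-h(u)$ and $h(w)-h(u)$ respectively (a monotone vertical edge path between two levels has length equal to their difference, and it is a geodesic since $h$ is $1$-Lipschitz). Concatenating these and applying the triangle inequality gives $|vw| \le h(v)+h(w)-2h(u)$, which rearranges to $h(u) \le (v|w)_{h}$. For $\log_{a} D - c(a,\tau) \le h(u)$: Proposition~\ref{connected filling} produces a branch point $u_{0}$ for $\{v,w\}$ with $a^{h(u_{0})} \asymp_{C(a,\tau)} D$, and since $h(u)=h(u_{0})$ this gives $h(u) \ge \log_{a} D - c(a,\tau)$.

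The substantive inequality is $(v|w)_{h} \le \log_{a} D + c(a,\tau)$, which is where the work lies. Using $(v|w)_{h} \le \min\{h(v),h(w)\} = \log_{a} a^{\min\{h(v),h(w)\}}$ from \eqref{lip height} and the elementary bound $\max\{d(v,w),a^{\min\{h(v),h(w)\}}\} \ge \frac12 D$, the inequality reduces to the single estimate $d(v,w) \le C(a,\tau)\, a^{(v|w)_{h}}$. To prove this, fix a geodesic from $v$ to $w$ and realize it as an edge path $v = v_{0} \sim v_{1} \sim \cdots \sim v_{k} = w$ with $k = |vw|$. Since $B(v_{i}) \cap B(v_{i+1}) \neq \emptyset$, we have $d(v_{i},v_{i+1}) < \tau(a^{h(v_{i})}+a^{h(v_{i+1})})$, so $d(v,w) \le 2\tau\sum_{i=0}^{k}a^{h(v_{i})}$; it thus suffices to bound this sum by $C(a)\,a^{(v|w)_{h}}$. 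Write $t := (v|w)_{h}$. Since $h$ is $1$-Lipschitz, every vertex of the geodesic has height $\ge t$ (the subpath to a lowest vertex, and the subpath from it, each have length at least the corresponding drop in height, forcing $k \ge h(v)+h(w)-2\min_{i}h(v_{i})$). Setting $M_{\ell} := \#\{i : h(v_{i}) \le \ell\}$, a length-budget argument — the geodesic spends at least $\max\{h(v)-\ell,0\}$ steps before first reaching height $\le \ell$ and at least $\max\{h(w)-\ell,0\}$ steps after leaving it for the last time, leaving at most the remaining steps in between — gives $M_{\ell} \le 2(\ell - t) + 1$ for every integer $\ell \ge \lceil t\rceil$, and $M_{\ell} = 0$ for $\ell < \lceil t\rceil$. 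Summation by parts then yields $\sum_{i}a^{h(v_{i})} = (1-a)\sum_{\ell\ge\lceil t\rceil}M_{\ell}a^{\ell} \le (1-a)\,a^{\lceil t\rceil}\sum_{j\ge0}(2j+3)a^{j} \le C(a)\,a^{t}$, as needed.

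Assembling the three inequalities gives $0 \le (v|w)_{h} - h(u) \le 2c(a,\tau)$, hence $h(u) \doteq_{c(a,\tau)} (v|w)_{h}$, and the asymptotic comparison follows as explained in the first paragraph. The main obstacle is the estimate $d(v,w) \le C(a,\tau)\,a^{(v|w)_{h}}$, and specifically the combinatorial bound $M_{\ell} \le 2(\ell - t) + 1$: this is exactly the point where it matters that the path is a \emph{geodesic} rather than an arbitrary edge path from $v$ to $w$, since a general path could linger at low heights and make $\sum_{i}a^{h(v_{i})}$ arbitrarily large compared with $a^{(v|w)_{h}}$.
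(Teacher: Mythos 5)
Your proof is correct and follows essentially the same route as the paper: the upper bound $h(u)\le (v|w)_{h}$ by concatenating the two vertical paths through the branch point, the lower bound via Proposition~\ref{connected filling} together with the key estimate $d(v,w)\le C(a,\tau)\,a^{(v|w)_{h}}$ obtained by summing ball radii along a geodesic edge path, and inequality \eqref{lip height} to assemble the comparison. The only (harmless) difference is bookkeeping in that key sum: the paper splits the geodesic at a near-midpoint index and sums two geometric series, while you count vertices per height level ($M_{\ell}\le 2(\ell-t)+1$) and use summation by parts, both exploiting exactly the point you flag, namely that the number of edges equals $|vw|$.
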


\begin{proof}
Since the claims of the lemma hold trivially when $v = w$ we can assume that $v \neq w$. Proposition \ref{connected filling} gives the existence of a branch point $u$ for $\{v,w\}$ satisfying \eqref{branch comparison}. The vertical edge path from $v$ to $u$ followed by the vertical edge path from $u$ to $w$ gives an edge path from $v$ to $w$, which shows that
\[
|vw| \leq |vu| + |uw| = h(v)-h(u) + h(w)-h(u) = h(v)+h(w)-2h(u).
\]
Rearranging this we obtain
\[
h(u) \leq \frac{1}{2}(h(v)+h(w)-|vw|) = (v|w)_{h}.
\]

To get a bound in the other direction, let $v = v_{0},v_{1},\dots,v_{k} = w$ be a sequence of vertices joined by edges that gives a geodesic $\gamma$ from $v$ to $w$. Then $|vw| = k$ and $k \geq 1$ since $v \neq w$. For $1 \leq i \leq k$ we have $B(v_{i-1}) \cap B(v_{i}) \neq \emptyset$ and therefore, using $|h(v_{i-1})-h(v_{i})| \leq 1$, 
\[
d(v_{i-1},v_{i}) < 2\tau \alpha^{-\min\{h(v_{i-1}),h(v_{i})\}} \leq 2\tau \alpha^{-h(v_{i-1})+1}.
\]
We can run the same argument viewing $\gamma$ as a geodesic from $w$ to $v$ instead, setting $w_{i} = v_{k-i}$ for $0 \leq i \leq k$. We see from this that we also have
\[
d(w_{i-1},w_{i}) < 2\tau  \alpha^{-h(w_{i-1})+1},
\]
for $1 \leq i \leq k$. For each $1 \leq l \leq k$ we thus obtain an estimate (using $h(v_{i-1}) \geq h(v)-i+1$ and $h(w_{i-1}) \geq h(w)- i+ 1$), 
\begin{align*}
d(v,w) &\leq 
\sum_{i=1}^{k}d(v_{i-1},v_{i}) \\
&\leq \sum_{i=1}^{l}d(v_{i-1},v_{i}) + \sum_{i=1}^{k-l+1}d(w_{i-1},w_{i}) \\
&< 2\tau \alpha^{-h(v)}\sum_{i=1}^{l} \alpha^{i} + 2\tau \alpha^{-h(w)}\sum_{i=1}^{k-l+1} \alpha^{i} \\
&\leq \frac{2\tau \alpha}{\alpha-1}(\alpha^{-h(v)}(\alpha^{l}-1) + \alpha^{-h(w)}(\alpha^{k-l+1}-1)) \\
&\leq \frac{2\tau \alpha}{\alpha-1}(\alpha^{l-h(v)}+ \alpha^{k-l+1-h(w)}). 
\end{align*}
We set $l = \lceil\frac{1}{2}(k-h(w)+h(v)) \rceil$ (the least integer greater than this quantity), observing that $1 \leq l \leq k$ since $|h(v)-h(w)| \leq k$. This gives, after some simplification,
\[
d(v,w) \leq C(\alpha,\tau)\alpha^{-\frac{1}{2}(h(v)+h(w)-k)} = C(\alpha,\tau) \alpha^{-(v|w)_{h}},
\]
recalling that $|vw| = k$. By Proposition \ref{connected filling} and inequality \eqref{lip height}, we then have
\[
\alpha^{-h(u)} \leq C(\alpha,\tau) \alpha^{-(v|w)_{h}},
\]
which implies upon taking logarithms that
\[
h(u) \geq (v|w)_{h}-c(\alpha,\tau). 
\]
This gives the desired lower bound of the first approximate equality of the lemma. The second comparison inequality follows by using Proposition \ref{connected filling} again. 
\end{proof}

We now prove an inequality similar to the $4\delta$-inequality \eqref{delta inequality} for our formal Gromov products based at $h$. 

\begin{lem}\label{delta inequality filling}
Let $u,v,w \in V$. Then 
\[
(u|w)_{h} \geq \min\{(u|v)_{h},(v|w)_{h}\} - c(\alpha,\tau).
\]
\end{lem}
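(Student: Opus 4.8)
The plan is to reduce the desired inequality for the formal Gromov product based at $h$ to the comparison inequality for branch points established in Lemma~\ref{branch estimate}, and then invoke the genuine ultrametric-type inequality for the relevant expression in $Z$. Concretely, given $u,v,w \in V$, set $n = \min\{h(u),h(v),h(w)\}$ and write $m_{uv} = \min\{h(u),h(v)\}$, etc. Lemma~\ref{branch estimate} gives
\[
a^{(u|w)_{h}} \asymp_{C(a,\tau)} d(u,w) + a^{m_{uw}}, \qquad a^{(u|v)_{h}} \asymp_{C(a,\tau)} d(u,v) + a^{m_{uv}},
\]
and similarly for $(v|w)_{h}$. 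Taking logarithms (base $a$, noting $\log a < 0$ so this reverses inequalities but is harmless since we are only establishing a rough equality) converts the sought inequality $(u|w)_{h} \geq \min\{(u|v)_{h},(v|w)_{h}\} - c(a,\tau)$ into the claim that
\[
d(u,w) + a^{m_{uw}} \leq C(a,\tau)\bigl(\max\{d(u,v) + a^{m_{uv}},\ d(v,w) + a^{m_{vw}}\}\bigr).
\]

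To prove this last inequality I would split into the two summands. For the metric part, the triangle inequality in $Z$ gives $d(u,w) \leq d(u,v) + d(v,w) \leq 2\max\{d(u,v), d(v,w)\}$, which is bounded by $2\max\{d(u,v)+a^{m_{uv}}, d(v,w)+a^{m_{vw}}\}$. For the height part, I would argue that $a^{m_{uw}}$ is controlled by the right-hand side by a short case analysis: if $h(v) \geq \min\{h(u),h(w)\}$ then $m_{uw} \geq \min\{m_{uv}, m_{vw}\}$ (since lowering one index from $v$ to whichever of $u,w$ achieves the minimum can only decrease the min), so $a^{m_{uw}} \leq \max\{a^{m_{uv}}, a^{m_{vw}}\}$; if instead $h(v) < \min\{h(u),h(w)\}$, then $m_{uv} = m_{vw} = h(v)$, and I claim $a^{h(v)} \gtrsim d(u,v) + d(v,w) \geq d(u,w)$ up to $C(a,\tau)$ — this is because $v$ sits at a strictly lower level than both $u$ and $w$, so by Lemma~\ref{geometric series} applied along the vertical edge paths that exist by connectedness (Proposition~\ref{connected filling}) the $Z$-distances $d(u,v)$ and $d(v,w)$ are each $\lesssim a^{h(v)}/(1-a)$; then $a^{m_{uw}} = a^{\min\{h(u),h(w)\}} \leq a^{h(v)}$ trivially as well. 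Adding the two parts gives the required bound with a constant depending only on $a$ and $\tau$.

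Actually, a cleaner route avoids the second case analysis entirely: since the quantity $d(\cdot,\cdot) + a^{\min\{h(\cdot),h(\cdot)\}}$ is, up to the constant $C(a,\tau)$, equal to $a^{(\cdot|\cdot)_{h}}$, and since $a^{n}$ (for the global minimum $n$) dominates all the $a^{m}$ terms, one can instead observe that $F(x,y) := d(x,y) + a^{n}$ is itself a genuine metric-like quantity — indeed $F$ satisfies $F(u,w) \leq F(u,v) + F(v,w)$ directly — and then note that on the set $\{u,v,w\}$ the functions $d(x,y)+a^{m_{xy}}$ and $d(x,y)+a^{n}$ agree up to a factor controlled by how far apart the heights are; but controlling that factor brings back the same case analysis, so I would present the direct argument above. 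The main obstacle is the second case $h(v) < \min\{h(u),h(w)\}$, where the naive bound $a^{m_{uw}} \leq \max\{a^{m_{uv}}, a^{m_{vw}}\}$ fails and one genuinely needs the geometric-series estimate of Lemma~\ref{geometric series} (together with the existence of the connecting vertical paths through a branch point) to absorb $d(u,w)$ into $a^{h(v)}$. Everything else is the triangle inequality and bookkeeping of constants depending on $a$ and $\tau$.
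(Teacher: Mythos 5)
Your core reduction is correct and is in substance the paper's own proof: the paper simply adds the two branch-point comparisons, i.e.\ it notes that
\[
d(u,w) + a^{\min\{h(u),h(w)\}} \leq d(u,v) + a^{\min\{h(u),h(v)\}} + d(v,w) + a^{\min\{h(v),h(w)\}},
\]
applies Lemma \ref{branch estimate} to both sides, and takes logarithms; your version with a maximum instead of a sum is the same argument.

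However, the step you single out as ``the main obstacle'' is not an obstacle, and the fix you propose for it is false. Writing $m_{xy}=\min\{h(x),h(y)\}$, the naive bound $a^{m_{uw}} \leq \max\{a^{m_{uv}},a^{m_{vw}}\}$ holds in \emph{every} case, because $m_{uw} \geq \min\{h(u),h(v),h(w)\} = \min\{m_{uv},m_{vw}\}$ and $t \mapsto a^{t}$ is decreasing for $0<a<1$; in your second case $m_{uv}=m_{vw}=h(v)<m_{uw}$, so $a^{m_{uw}} \leq a^{h(v)} = \max\{a^{m_{uv}},a^{m_{vw}}\}$ --- a fact you actually record (``trivially as well''), which already finishes that case, since the metric part was disposed of once and for all by the triangle inequality. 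By contrast, the auxiliary claim $a^{h(v)} \gtrsim d(u,v)+d(v,w)$ is wrong in general: Lemma \ref{geometric series} applies only to vertices joined by a \emph{vertical} edge path, which arbitrary $u,v$ need not be; connectedness (Proposition \ref{connected filling}) only provides a vertical path through a branch point $x$ with $h(x)$ possibly far below $h(v)$, so one gets $d(u,v)\lesssim a^{h(x)}$, not $a^{h(v)}$, and indeed $d(u,v)$ can be arbitrarily large compared with $a^{h(v)}$ (two vertices of large height can be far apart in $Z$). Delete that digression and the accompanying claim that the geometric-series estimate is ``genuinely needed,'' and what remains is a complete proof identical in substance to the paper's.
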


\begin{proof}
Let $u,v,w \in V$ be vertices. By the triangle inequality in $Z$ we have 
\[
d(u,w) + \alpha^{-\min\{h(u),h(w)\}} \leq d(u,v) + \alpha^{-\min\{h(u),h(v)\}}  + d(v,w) + \alpha^{-\min\{h(v),h(w)\}},
\]
which becomes, upon applying Lemma \ref{branch estimate}, 
\begin{align*}
\alpha^{-(u|w)_{h}} &\leq C(\alpha,\tau) (\alpha^{-(u|v)_{h}} + \alpha^{-(v|w)_{h}}) \\
&\leq C(\alpha,\tau)\alpha^{-\min\{(u|v)_{h},(v|w)_{h}\}}. 
\end{align*}
Taking logarithms of each side gives the desired inequality. 
\end{proof}

We can now show that $X$ is Gromov hyperbolic. For this we use some terminology from \cite[Chapter 2]{BS07}: a \emph{$\delta$-triple} for $\delta \geq 0$ is a triple $(a,b,c)$ of real numbers $a,b,c$ such that the two smallest numbers differ by at most $\delta$. Observe that $(a,b,c)$ is a $\delta$-triple if and only if the inequality 
\begin{equation}\label{basic delta}
c \geq \min\{a,b\} - \delta,
\end{equation}
holds for all permutations of the roles of $a$, $b$, and $c$. We will also need the following standard claim \cite[Lemma 2.1.4]{BS07} which is called the \emph{Tetrahedron lemma}. 

\begin{lem}\label{tetrahedron}
Let $d_{12}$, $ d_{13}$, $d_{14}$, $d_{23}$, $d_{24}$, $d_{34}$ be six numbers such that the four triples $(d_{23},d_{24},d_{34})$, $(d_{13},d_{14},d_{34})$, $(d_{12},d_{14},d_{24})$, and $(d_{12},d_{13},d_{23})$ are $\delta$-triples. Then 
\[
(d_{12}+d_{34},d_{13}+d_{24},d_{14}+d_{23})
\]
is a $2\delta$-triple.
\end{lem}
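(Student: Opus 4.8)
The plan is to prove Lemma~\ref{tetrahedron} by contradiction, exploiting the symmetry of the data under relabeling the four indices. Write $S_1 = d_{12}+d_{34}$, $S_2 = d_{13}+d_{24}$, $S_3 = d_{14}+d_{23}$, and suppose $(S_1,S_2,S_3)$ is \emph{not} a $2\delta$-triple. Then its two smallest entries differ by more than $2\delta$, so one of the three sums is a strict minimum lying more than $2\delta$ below each of the other two. Permuting the indices $\{1,2,3,4\}$ permutes the four hypothesized $\delta$-triple conditions (the ``face'' conditions) among themselves and permutes $S_1,S_2,S_3$ in every possible way — the induced homomorphism $S_4\to S_3$ is onto, with kernel the Klein four-group — so after relabeling I may assume $S_1$ is this isolated minimum:
\begin{equation*}
S_1 + 2\delta < S_2 \quad\text{and}\quad S_1 + 2\delta < S_3. \tag{$\ast$}
\end{equation*}

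Next I would normalize the location of the largest of the six numbers $d_{ij}$. The relabelings that still fix the pair decomposition $\{1,2\},\{3,4\}$ form a dihedral group of order $8$; these preserve $(\ast)$, and they act on $\{d_{12},d_{34},d_{13},d_{14},d_{23},d_{24}\}$ with exactly two orbits, $\{d_{12},d_{34}\}$ and $\{d_{13},d_{14},d_{23},d_{24}\}$. Hence I may assume the largest of the six is either $d_{34}$ (Case A) or $d_{13}$ (Case B).

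In Case A, from $S_1+2\delta<S_2$, i.e.\ $d_{12}+d_{34}+2\delta<d_{13}+d_{24}$, together with $d_{24}\le d_{34}$, I get $d_{13}>d_{12}+2\delta$; symmetrically $S_1+2\delta<S_3$ and $d_{14}\le d_{34}$ give $d_{23}>d_{12}+2\delta$. But then in the face-triple $(d_{12},d_{13},d_{23})$ the entry $d_{12}$ is strictly smaller than each of the other two entries by more than $2\delta$, hence by more than $\delta$, so that triple is not a $\delta$-triple — contradicting the hypothesis. In Case B, $d_{13}$ is the largest entry of both face-triples $(d_{12},d_{13},d_{23})$ and $(d_{13},d_{14},d_{34})$, so the $\delta$-triple property forces the remaining two entries of each to differ by at most $\delta$: $|d_{12}-d_{23}|\le\delta$ and $|d_{14}-d_{34}|\le\delta$. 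Adding these gives $d_{12}+d_{34}\ge(d_{23}+d_{14})-2\delta=S_3-2\delta$, i.e.\ $S_1\ge S_3-2\delta$, contradicting $(\ast)$. In either case we reach a contradiction, completing the proof.

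The step I expect to require the most care is the symmetry bookkeeping in the second and third paragraphs: one must verify that the relabelings used first to fix which $S_i$ is the isolated minimum, and then to fix which $d_{ij}$ is the largest, genuinely preserve the standing hypotheses — in particular $(\ast)$ — and that the two representatives $d_{34}$ and $d_{13}$ really do exhaust all possibilities, so that no further cases arise. Once that normalization is fixed, each of the two cases is a one-line estimate. A minor point to keep straight throughout is that being a $\delta$-triple constrains the two \emph{smallest} of the three entries, not the two largest.
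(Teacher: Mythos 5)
The paper does not actually prove this lemma itself --- it is quoted from \cite[Lemma 2.1.4]{BS07} --- so there is no in-paper argument to compare against; judged on its own, your proof is correct. The contradiction setup is sound: failing to be a $2\delta$-triple does mean the smallest sum lies more than $2\delta$ below each of the other two; the $S_4$-action permutes the four face conditions among themselves and realizes every permutation of $S_1,S_2,S_3$, so you may place the isolated minimum at $S_1$; and the stabilizer of the pairing $\{\{1,2\},\{3,4\}\}$ fixes $S_1$, at most swaps $S_2$ and $S_3$ (so $(\ast)$ survives), and has exactly the two edge-orbits $\{d_{12},d_{34}\}$ and $\{d_{13},d_{14},d_{23},d_{24}\}$, so $d_{34}$ and $d_{13}$ do exhaust where a maximal entry can sit. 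The two terminal estimates are also right, including the observation that a maximal entry of a $\delta$-triple forces the remaining two entries to differ by at most $\delta$ (ties cause no trouble, and in Case A the entry $d_{12}$ lying more than $\delta$ below both others indeed violates the two-smallest condition). Compared with the usual direct argument of the cited source, which normalizes only the maximal edge and derives the required inequalities for the three sums outright, your route trades that single normalization for a contradiction plus group-theoretic bookkeeping: the benefit is that each final case is a one-line estimate, the cost is that the burden shifts to the orbit analysis, which you have carried out correctly.
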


\begin{prop}\label{hyperbolicity filling}
The space $X$ is $\delta$-hyperbolic with $\delta = \delta(\alpha,\tau)$. 
\end{prop}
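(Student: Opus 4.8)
The plan is to establish a four-point form of hyperbolicity directly from Lemma~\ref{delta inequality filling} and the Tetrahedron lemma, and then to invoke the equivalence of the various definitions of Gromov hyperbolicity (cf.\ Remark~\ref{alternate} and \cite[Chapitre~2, Proposition~21]{GdH90}). The subtlety I want to flag at the outset is that $(\,\cdot\,|\,\cdot\,)_{h}$ is \emph{not} a genuine Gromov product based at a point of $X$ (the height function behaves like a Busemann function at an ideal point below $X$), so one cannot simply quote that the $4\delta$-inequality forces hyperbolicity; the Tetrahedron lemma is precisely the bridge converting the inequality of Lemma~\ref{delta inequality filling} into a statement about the metric $|\cdot\,\cdot|$ itself. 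As a first step I would reduce to the vertex set: every point of $X$ lies within distance $\tfrac12$ of a vertex, and replacing the four points of a quadruple by nearby vertices changes each of the six pairwise distances by at most $1$, so it suffices to prove a four-point condition for quadruples $v_{1},v_{2},v_{3},v_{4}\in V$ with a constant depending only on $a$ and $\tau$.

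Fix $v_{1},\dots,v_{4}\in V$ and set $d_{ij}:=(v_{i}|v_{j})_{h}$ for $i\neq j$, with $d_{ij}=d_{ji}$. Since $(\,\cdot\,|\,\cdot\,)_{h}$ is symmetric in its two arguments, Lemma~\ref{delta inequality filling} applied with each of the three possible choices of the ``middle'' vertex shows that each of the four face triples $(d_{23},d_{24},d_{34})$, $(d_{13},d_{14},d_{34})$, $(d_{12},d_{14},d_{24})$, $(d_{12},d_{13},d_{23})$ is a $c(a,\tau)$-triple in the sense of \eqref{basic delta}. The Tetrahedron lemma (Lemma~\ref{tetrahedron}) then gives that
\[
\bigl(d_{12}+d_{34},\; d_{13}+d_{24},\; d_{14}+d_{23}\bigr)
\]
is a $2c(a,\tau)$-triple, i.e.\ its two smallest entries differ by at most $2c(a,\tau)$.

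To finish I would convert this back to the metric. From the definition of $(\,\cdot\,|\,\cdot\,)_{h}$ we have $|v_{i}v_{j}| = h(v_{i})+h(v_{j})-2d_{ij}$, and since $\{i,j\}$ together with its complement exhausts $\{1,2,3,4\}$ the sum $h(v_{1})+h(v_{2})+h(v_{3})+h(v_{4})$ is independent of the pairing; hence
\[
|v_{i}v_{j}|+|v_{k}v_{l}| \;=\; \Bigl(\textstyle\sum_{m=1}^{4}h(v_{m})\Bigr) - 2\,(d_{ij}+d_{kl})
\]
whenever $\{i,j,k,l\}=\{1,2,3,4\}$. Thus the three opposite-edge sums $|v_{1}v_{2}|+|v_{3}v_{4}|$, $|v_{1}v_{3}|+|v_{2}v_{4}|$, $|v_{1}v_{4}|+|v_{2}v_{3}|$ are the image of the triple above under the common strictly decreasing affine map $t\mapsto \sum_{m}h(v_{m})-2t$, so their two largest entries differ by at most $4c(a,\tau)$. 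This is the four-point condition for $X$ with constant $2\delta=4c(a,\tau)$; combined with the reduction to vertices and the equivalence of definitions, it shows $X$ is $\delta$-hyperbolic for a suitable $\delta=\delta(a,\tau)$. (An alternative to the Tetrahedron argument would be to glue a geodesic ray below $X$ as in Remark~\ref{bounded from unbounded} so that $h$ becomes an honest Busemann function and then cite the hyperbolicity criterion from \cite[Chapter 3]{BS07}, but the direct computation above is self-contained.)

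The only step that genuinely requires care is the last one: Lemma~\ref{tetrahedron} outputs that the \emph{two smallest} of the summed Gromov products are close, whereas the metric four-point condition demands that the \emph{two largest} of the opposite-edge distance sums are close — these match exactly because of the sign reversal in the affine map $t\mapsto\sum_{m}h(v_{m})-2t$. Everything else is bookkeeping, and every constant produced is a function of $a$ and $\tau$ alone, since the constant in Lemma~\ref{delta inequality filling} is.
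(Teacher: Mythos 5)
Your proof is correct and follows essentially the same route as the paper: both arguments feed Lemma~\ref{delta inequality filling} into the Tetrahedron lemma to obtain a four-point condition and then invoke the equivalence of definitions of hyperbolicity from \cite[Chapitre 2, Proposition 21]{GdH90}, the only cosmetic difference being that the paper passes through the cross-difference triple and \cite[Proposition 2.4.1]{BS07} to get the $4\delta$-inequality at a basepoint, while you convert directly to the metric four-point condition via the affine map $t\mapsto\sum_{m}h(v_{m})-2t$. Your explicit reduction to the vertex set is a welcome touch of care, since Lemma~\ref{delta inequality filling} is stated only for vertices and the paper applies it to arbitrary quadruples without comment.
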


\begin{proof}
We will use the \emph{cross-difference triple} defined in \cite[Chapter 2.4]{BS07}. For a quadruple of points $Q=(x,y,z,u) \in X$ and a fixed basepoint $o \in X$ this triple is defined by
\[
A_{o}(Q)= ((x|y)_{o} + (z|u)_{o},(x|z)_{o}+(y|u)_{o},(x|u)_{o}+(y|z)_{o}). 
\]
The triple $A_{o}(Q)$ has the same differences among its members as the triple 
\[
A_{h}(Q) = ((x|y)_{h} + (z|u)_{h}, (x|z)_{h}+(y|u)_{h}, (x|u)_{h} + (y|z)_{h}),
\]
as a routine calculation shows for instance that
\[
(x|y)_{o} + (z|u)_{o} - (x|z)_{o}-(y|u)_{o} = (x|y)_{h} + (z|u)_{h} - (x|z)_{h}-(y|u)_{h} ,
\]
with both expressions being equal to
\[
\frac{1}{2}(-|xy|-|zu|+|xz|+|yu|). 
\]
Similar calculations give equality for the other differences. Thus $A_{o}(Q)$ is a $\delta$-triple for a given $\delta \geq 0$ if and only if $A_{h}(Q)$ is a $\delta$-triple. 

Using Lemma \ref{delta inequality filling} we conclude that the six numbers $(x|y)_{h}$, $(z|u)_{h}$, $(x|z)_{h}$, $(y|u)_{h}$, $(x|u)_{h}$, $(y|z)_{h}$ together satisfy the hypotheses of Lemma \ref{tetrahedron} with parameter $\delta = \delta(\alpha,\tau)$. This implies that $A_{h}(Q)$ is a $2\delta$-triple and therefore that $A_{o}(Q)$ is a $2\delta$-triple. By \cite[Proposition 2.4.1]{BS07} this implies that inequality \eqref{delta inequality} holds for Gromov products based at $o$ in $X$ (with $2\delta$ replacing $4\delta$). By  \cite[Chapitre 2, Proposition 21]{GdH90} this implies that geodesic triangles in $X$ are $8\delta$-thin, i.e., $X$ is $8\delta$-hyperbolic. 
\end{proof}

We next show that any vertex in $V$ is part of a vertical geodesic line. We will in fact show something stronger. We let $\bar{Z}$ denote the completion of $Z$, and continue to write $d$ for the canonical extension of the metric on $Z$ to its completion. For $r > 0$ and a point $z \in \bar{Z}$ we will write $B'(z,r)$ for the open ball of radius $r$ centered at $z$ in the completion $\bar{Z}$. 

\begin{lem}\label{second height connection}
Let $z \in \bar{Z}$. Then there is a vertical geodesic $\gamma: \R \rightarrow X$ with $h(\gamma(t)) = t$ for $t \in \R$ such that, writing $\gamma(n) = (z_{n},n)$ for $n \in \Z$, we have $z \in B'(z_{n},\frac{\tau}{3}\alpha^{-n})$ for each $n \in \Z$. Furthermore if $v = (z,m)$ is a given vertex of $V$ then we can construct $\gamma$ such that $\gamma(m) = v$. 
\end{lem}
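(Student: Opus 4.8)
\textbf{Proof proposal for Lemma~\ref{second height connection}.}
The plan is to build the vertices $z_n=(z_n,n)$ level by level and then check that consecutive vertices are joined by vertical edges. First I would fix $z\in\bar Z$. For each $n\in\Z$, since $S_n$ is a maximal $a^n$-separated subset of $Z$, there is a point $z_n\in S_n$ with $d(z,z_n)<a^n$; here $d$ denotes the extension of the metric to $\bar Z$, and such a point exists because $z$ is a limit of points of $Z$, each of which lies within $a^n$ of some element of $S_n$, and we may take a limit (or argue directly that the $a^n$-balls around $S_n$ cover $\bar Z$ by density). In the case that $v=(z,m)\in V$, i.e.\ $z\in S_m$, I would simply set $z_m=z$ at the level $m$, which is legitimate since $d(z,z)=0<a^m$, and choose the $z_n$ for $n\neq m$ as above; this forces $\gamma(m)=v$. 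Note $d(z,z_n)<a^n\le\frac{\tau}{3}a^n$ since $\tau>3$ by \eqref{tau requirement}, so $z\in B'(z_n,\frac{\tau}{3}a^n)$ as required; strictly we get the stronger bound $a^n$, but $\frac{\tau}{3}a^n$ is what we will use.

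Next I would verify that $v_n:=(z_n,n)$ is joined to $v_{n+1}:=(z_{n+1},n+1)$ by a vertical edge for every $n$, which suffices to produce the bi-infinite vertical edge path: by definition of the graph we need $|h(v_n)-h(v_{n+1})|\le1$, which holds with equality, and $B(v_n)\cap B(v_{n+1})\neq\emptyset$. For the latter I compute
\[
d(z_n,z_{n+1})\le d(z_n,z)+d(z,z_{n+1})< a^n + a^{n+1} < 2a^n < \tau a^{n+1}\le \min\{\tau a^n,\tau a^{n+1}\},
\]
where $2a^n<\tau a^{n+1}$ is equivalent to $2<\tau a$. I would need to double-check this is implied by \eqref{tau requirement}: it is not automatic, so instead I would use $d(z_n,z_{n+1})<a^n+a^{n+1}<2a^n\le\tau a^n$ (valid since $\tau>3>2$) together with the fact that $z$ itself witnesses $B'(z_n,\frac{\tau}{3}a^n)\cap B'(z_{n+1},\frac{\tau}{3}a^{n+1})\ni z$ — actually the cleanest route is simply to observe $z\in B'(z_n,a^n)\subseteq B(v_n)$ (the ball $B(v_n)$ in $Z$ is $B(z_n,\tau a^n)$, and its closure in $\bar Z$ contains $B'(z_n,a^n)$, which meets $Z$ since $a^n<\tau a^n$), and likewise $z\in\overline{B(v_{n+1})}$, but to get a genuine nonempty intersection inside $Z$ I pick a point $z'\in Z$ with $d(z,z')$ very small; then $z'\in B(v_n)\cap B(v_{n+1})$. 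This gives $v_n\sim v_{n+1}$.

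Then I would assemble $\gamma:\R\to X$ by concatenating the vertical edges $[v_n,v_{n+1}]$, parametrized by arclength with $h(\gamma(t))=t$; this is possible and $\gamma$ is a geodesic because, as noted in Section~\ref{sec:filling}, vertical edge paths are always geodesics in $X$ (edges only connect vertices of the same or adjacent height, so $|\gamma(s)\gamma(t)|=|s-t|$ exactly). By construction $\gamma(n)=v_n=(z_n,n)$ with $z\in B'(z_n,\frac{\tau}{3}a^n)$, and if $v=(z,m)\in V$ then $\gamma(m)=v$ by our choice $z_m=z$. The main obstacle I anticipate is the density/completion bookkeeping: making sure that a point $z\in\bar Z\setminus Z$ really does lie within $a^n$ of some $z_n\in S_n\subseteq Z$ (using maximality of $S_n$ plus density of $Z$ in $\bar Z$ and a limiting argument), and then correctly locating an auxiliary point of $Z$ in $B(v_n)\cap B(v_{n+1})$ to confirm the edge condition, since the edge condition is phrased in terms of balls in $Z$, not $\bar Z$. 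Everything else is routine arithmetic with the constraint $\tau>3$.
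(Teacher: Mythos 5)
Your overall strategy is the same as the paper's (pick one vertex $z_n$ at each height $n$ whose associated ball contains $z$, check consecutive vertices are adjacent by exhibiting a point of $Z$ in $B(v_n)\cap B(v_{n+1})$, concatenate vertical edges, and set $z_m=z$ when $v=(z,m)\in V$), but there is a genuine gap at the very first step. You claim that for $z\in\bar Z$ there is $z_n\in S_n$ with $d(z,z_n)<a^n$, "by a limiting argument." This is false in general when $z\in\bar Z\setminus Z$: maximality of $S_n$ only gives that the balls $B(y,a^n)$, $y\in S_n$, cover $Z$, and density of $Z$ in $\bar Z$ then only yields, for every $\e>0$, some $s\in S_n$ with $d(z,s)<a^n+\e$. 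Since the candidate points $s$ may change with $\e$ (they are $a^n$-separated, so a non-constant sequence of them cannot converge), you cannot pass to the limit; even the non-strict bound $d(z,z_n)\le a^n$ can fail, and the strict bound certainly can (e.g.\ a completion point lying at distance exactly $a^n$ from every nearby net point, such as $z=1$ in $\bar Z=\{0\}\cup[1,2]$ with $Z=\{0\}\cup(1,2]$, $a^n=1$, $S_n=\{0,2\}$). This is precisely why the lemma is stated with radius $\frac{\tau}{3}a^n$ and why the hypothesis $\tau>3$ matters: the correct selection is that for any radius strictly larger than $a^n$ — in particular $\frac{\tau}{3}a^n$, since $\frac{\tau}{3}>1$ — the balls $B'(y,\frac{\tau}{3}a^n)$, $y\in S_n$, do cover $\bar Z$, so one can choose $z_n$ with $d(z,z_n)<\frac{\tau}{3}a^n$ but in general nothing stronger.

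The good news is that the rest of your argument only needs this weaker bound, and with it your final route for the edge condition is exactly the paper's: take $z'\in Z$ with $d(z,z')<\frac{\tau}{3}a^{n+1}$, so that $d(z',z_{n+1})<\frac{2\tau}{3}a^{n+1}<\tau a^{n+1}$ and $d(z',z_n)<\frac{\tau}{3}a^{n+1}+\frac{\tau}{3}a^n<\tau a^n$, giving $z'\in B(v_n)\cap B(v_{n+1})$ and hence $v_n\sim v_{n+1}$. (In fact, once $d(z_n,z_{n+1})<\tau a^n$ you could also just observe that $z_{n+1}$ itself lies in $B(v_n)\cap B(v_{n+1})$, which is marginally simpler than introducing $z'$.) Your abandoned intermediate attempts — the inequality $2a^n<\tau a^{n+1}$, which you correctly note does not follow from \eqref{tau requirement}, and the ill-typed inclusion $B'(z_n,a^n)\subseteq B(v_n)$ of a ball in $\bar Z$ into a ball in $Z$ — should simply be deleted. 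With the radius corrected to $\frac{\tau}{3}a^n$ throughout, the assembly into a vertical geodesic line with $h(\gamma(t))=t$ and the "furthermore" clause $z_m=z$ are fine and coincide with the paper's proof.
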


\begin{proof}
Since $\frac{\tau}{3} > 1$ by \eqref{tau requirement} and since for each $n  \in \Z$ the balls $B(y,\alpha^{-n})$ cover $Z$ for $y \in S_{n}$, it follows from the fact that $Z$ is dense in $\bar{Z}$ that the balls $B'(y,\frac{\tau}{3}\alpha^{-n})$ for $y \in S_{n}$ cover $\bar{Z}$. Thus, given $z \in \bar{Z}$, for each $n \in \Z$ we can find $z_{n} \in S_{n}$ such that $z \in B'(z_{n},\frac{\tau}{3}\alpha^{-n})$. 

Let $v_{n} = (z_{n},n)$ be the associated vertex in $V$. We claim that for each $n \in \Z$ we have $B(v_{n}) \cap B(v_{n+1}) \neq \emptyset$. Since $Z$ is dense in $\bar{Z}$ we can find $y \in Z$ such that $d(y,z) < \frac{\tau}{3}\alpha^{-n-1}$. Then 
\[
d(y,z_{n+1}) \leq d(y,z) + d(z,z_{n+1}) < \frac{\tau}{3}\alpha^{-n-1} + \frac{\tau}{3}\alpha^{-n-1} < \tau \alpha^{-n-1},
\]
which implies that $y \in B(v_{n+1})$. A similar calculation shows that $y \in B(v_{n})$ since $\alpha^{-n-1} < \alpha^{-n}$. Thus $B(v_{n}) \cap B(v_{n+1}) \neq \emptyset$ and therefore $v_{n} \sim v_{n+1}$. We can then find a vertical geodesic $\gamma: \R \rightarrow X$ through the sequence of vertices $\{v_{n}\}_{n \in \Z}$, which can be parametrized such that $h(\gamma(t)) = t$ for $t \in \R$. Finally, if $v = (z,m)$ is a vertex of $V$ then we can choose $z_{m} = z$ in our construction since we trivially have $z \in B'(z,\frac{\tau}{3}\alpha^{-m})$. 
\end{proof}

A \emph{descending geodesic ray} $\gamma:[0,\infty) \rightarrow X$ is a vertical geodesic ray such that $h(\gamma(t))$ is strictly decreasing as a function of $t$. In this case we have $h(\gamma(t)) = h(\gamma(0)) - t$ for each $t \geq 0$. Similarly an \emph{ascending geodesic ray} $\gamma:[0,\infty) \rightarrow X$ is a vertical geodesic ray such that $h(\gamma(t))$ is strictly increasing as a function of $t$. In this case we instead have that $h(\gamma(t)) = h(\gamma(0)) + t$ for each $t \geq 0$. A vertical geodesic $\gamma$ is \emph{anchored} at a point $z \in \bar{Z}$ if for each vertex $(z_{m},m)$ belonging to $\gamma$ we have $z \in B'(z_{m},\frac{\tau}{3}\alpha^{-m})$; when the point $z$ does not need to be referenced we will just say that $\gamma$ is \emph{anchored}. Lemma \ref{second height connection} gives the existence of ascending and descending geodesic rays in $X$ anchored at any point $z \in \bar{Z}$. 

We will next show that all anchored descending vertical geodesic rays in $X$ define the same point in the Gromov boundary $\p X$. 

\begin{lem}\label{bounded distance vertical}
Let $\gamma$, $\sigma: [0,\infty) \rightarrow X$ be two descending geodesic rays in $X$ starting at vertices $v = \gamma(0)$ and $w = \sigma(0)$ of $X$ respectively and anchored at $y,z \in \bar{Z}$ respectively. Let $k \in \Z$ be such that $k \leq \min\{h(v),h(w)\}$ and $\frac{\tau}{3}\alpha^{-k} > d(y,z)$. Let $v_{k} \in \gamma \cap V_{k}$, $w_{k} \in \sigma \cap V_{k}$ be the vertices on these geodesics at the height $k$. Then $|v_{k}w_{k}| \leq 1$. 
\end{lem}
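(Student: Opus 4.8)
The statement only asserts the existence of an edge between two vertices of the \emph{same} height $k$, so by the construction of the graph $X$ it suffices to show that the associated balls $B(v_k)$ and $B(w_k)$ have nonempty intersection in $Z$ (together with the trivial remark that $v_k\sim w_k$ holds unless $v_k=w_k$, in which case there is nothing to prove). Everything will follow from the triangle inequality, applied in the completion $\bar Z$, combined with the hypothesis $\frac{\tau}{3}a^{k}>d(y,z)$ and the anchoring conditions on $\gamma$ and $\sigma$.

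Concretely, I would write $v_k=(z_k,k)$ and $w_k=(z_k',k)$ with $z_k,z_k'\in S_k$, so that $B(v_k)=B(z_k,\tau a^{k})$ and $B(w_k)=B(z_k',\tau a^{k})$. Since $\gamma$ is a descending geodesic ray anchored at $y$, the definition of ``anchored'' (cf.\ the discussion following Lemma \ref{second height connection}) gives $y\in B'(z_k,\tfrac{\tau}{3}a^{k})$, i.e.\ $d(y,z_k)<\tfrac{\tau}{3}a^{k}$; likewise $d(z,z_k')<\tfrac{\tau}{3}a^{k}$ since $\sigma$ is anchored at $z$. Combining these with the hypothesis $d(y,z)<\tfrac{\tau}{3}a^{k}$ and applying the triangle inequality in $\bar Z$ yields
\[
d(z_k,z_k')\leq d(z_k,y)+d(y,z)+d(z,z_k')<\tau a^{k}.
\]
Because $z_k$ and $z_k'$ are points of $Z$ and $d$ restricts from $\bar Z$ to the original metric on $Z$, this says precisely that $z_k\in B(z_k',\tau a^{k})=B(w_k)$; and trivially $z_k\in B(z_k,\tau a^{k})=B(v_k)$. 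Hence $z_k\in B(v_k)\cap B(w_k)$, so $B(v_k)\cap B(w_k)\neq\emptyset$, and since $h(v_k)=h(w_k)=k$ the edge rule of the hyperbolic filling gives $v_k\sim w_k$.

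There is no real obstacle in this argument; the only points requiring a small amount of care are (i) keeping track that the three relevant distance inequalities live in $\bar Z$ (where the anchoring condition is phrased) rather than in $Z$, and then using that $d$ on $\bar Z$ extends $d$ on $Z$ so that the conclusion about balls centered at $z_k,z_k'\in Z$ is legitimate, and (ii) noting that the inequality $\tfrac{\tau}{3}a^{k}>1$ coming from \eqref{tau requirement} is not even needed here, only the strict inequalities above. The degenerate case $v_k=w_k$ should be mentioned for completeness, as the relation ``$\sim$'' is defined only between distinct vertices.
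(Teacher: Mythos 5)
Your proof is correct and follows essentially the same route as the paper: apply the two anchoring bounds $d(v_k,y)<\tfrac{\tau}{3}a^{k}$, $d(w_k,z)<\tfrac{\tau}{3}a^{k}$ together with the hypothesis $d(y,z)<\tfrac{\tau}{3}a^{k}$ and the triangle inequality in $\bar Z$ to get $d(v_k,w_k)<\tau a^{k}$, hence $B(v_k)\cap B(w_k)\neq\emptyset$ and the same-height edge rule gives $v_k\sim w_k$. Your remark about the degenerate case $v_k=w_k$ is a reasonable extra precaution that the paper omits, but it changes nothing of substance.
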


\begin{proof}
By the anchoring condition we have $d(v_{k},y) < \frac{\tau}{3}\alpha^{-k}$ and $d(w_{k},z) < \frac{\tau}{3}\alpha^{-k}$. Hence
\[
d(v_{k},w_{k}) \leq d(v_{k},y) + d(y,z) + d(z,w_{k}) < \tau \alpha^{-k}.
\]
Thus $w_{k} \in B(v_{k})$ and therefore $B(v_{k}) \cap B(w_{k}) \neq \emptyset$, which implies that either $v_{k} = w_{k}$ or $v_{k} \sim w_{k}$. In both cases we conclude that $|v_{k}w_{k}| \leq 1$. 
\end{proof}

The Busemann functions associated to anchored descending geodesic rays have a particularly simple form. 

\begin{lem}\label{height busemann} 
Let $\gamma$ be an anchored descending geodesic ray in $X$ starting from a vertex $v \in V$. Then for all $x \in X$ we have 
\begin{equation}\label{height busemann equation}
b_{\gamma}(x) \doteq_{3} h(x)-h(\gamma(0)).
\end{equation}
\end{lem}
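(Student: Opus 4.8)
The plan is to prove the two matching estimates
\[
h(x)-h(\gamma(0)) \;\le\; b_\gamma(x) \;\le\; h(x)-h(\gamma(0))+1
\]
for every vertex $x\in V$, and then to absorb the passage to arbitrary points of $X$ into the additive constant; this gives \eqref{height busemann equation} in the form $b_\gamma(x)\doteq_3 h(x)-h(\gamma(0))$ (note that $b_\gamma(\gamma(s))=-s$ and $h(\gamma(s))=h(\gamma(0))-s$, so $b_\gamma$ and $h$ differ by the constant $h(\gamma(0))$ exactly along $\gamma$, and the whole content of the lemma is to control the error off $\gamma$). The lower bound costs nothing and already holds for every $x\in X$: since $h$ is $1$-Lipschitz and $h(\gamma(t))=h(\gamma(0))-t$, for all $t>h(\gamma(0))-h(x)$ we have $|\gamma(t)x|\ge |h(\gamma(t))-h(x)|=t+h(x)-h(\gamma(0))$, so $|\gamma(t)x|-t\ge h(x)-h(\gamma(0))$, and letting $t\to\infty$ in \eqref{first busemann definition} gives $b_\gamma(x)\ge h(x)-h(\gamma(0))$.

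The substance is the upper bound, which I would get by exhibiting a short path from $\gamma(t)$ to $x$. Assume first that $x=(z,n)$ is a vertex with $n=h(x)$ and that $\gamma$ is anchored at $y\in\bar Z$. By the remark following Lemma \ref{second height connection} there is a descending geodesic ray $\sigma$ with $\sigma(0)=x$ anchored at $z$. Pick an integer $k\le\min\{h(\gamma(0)),n\}$ with $\tfrac{\tau}{3}a^{k}>d(y,z)$, which exists because $a^{k}\to\infty$ as $k\to-\infty$. Writing $\gamma_k\in\gamma\cap V_k$ and $\sigma_k\in\sigma\cap V_k$ for the height-$k$ vertices of the two rays, Lemma \ref{bounded distance vertical} gives $\gamma_k\sim\sigma_k$, hence $|\gamma_k\sigma_k|\le 1$. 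For every $t\ge h(\gamma(0))-k$, the concatenation of the subsegment of $\gamma$ from $\gamma(t)$ to $\gamma_k$ (length $t-h(\gamma(0))+k$), the edge $\gamma_k\sigma_k$ (length $\le 1$), and the subsegment of $\sigma$ from $\sigma_k$ to $x$ (length $n-k$) is a path from $\gamma(t)$ to $x$ of length at most $t+h(x)-h(\gamma(0))+1$; hence $|\gamma(t)x|-t\le h(x)-h(\gamma(0))+1$, and passing to the limit yields $b_\gamma(x)\le h(x)-h(\gamma(0))+1$. Together with the lower bound this gives $|b_\gamma(x)-h(x)+h(\gamma(0))|\le 1$ for all vertices $x$.

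To finish, take an arbitrary $x\in X$; if $x$ lies in the interior of an edge let $v$ be an endpoint of that edge, so $|xv|\le 1$. Since $b_\gamma$ is $1$-Lipschitz and the affine extension of $h$ to the edge has slope of modulus at most $1$, both $|b_\gamma(x)-b_\gamma(v)|$ and $|h(x)-h(v)|$ are at most $|xv|$, so
\[
|b_\gamma(x)-h(x)+h(\gamma(0))|\le |b_\gamma(x)-b_\gamma(v)|+|h(v)-h(x)|+|b_\gamma(v)-h(v)+h(\gamma(0))|\le 2|xv|+1\le 3,
\]
which is \eqref{height busemann equation}. I expect the only genuinely non-routine point to be the upper bound for vertices: one has to see that two anchored descending rays with distinct anchor points, once both have dropped below the height at which their anchor balls overlap, pass through vertices that remain within a single edge of one another — this is exactly Lemma \ref{bounded distance vertical} — so that the two rays plus one connecting edge splice into a path realizing the optimal length up to an additive $1$. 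Everything else is bookkeeping with the identity $h(\gamma(t))=h(\gamma(0))-t$.
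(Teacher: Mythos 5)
Your proof is correct and follows essentially the same route as the paper: reduce to vertices using that $b_{\gamma}$ and $h$ are $1$-Lipschitz, then run an anchored descending ray $\sigma$ from the given vertex and use Lemma \ref{bounded distance vertical} to join $\gamma$ and $\sigma$ by a single edge at a sufficiently low height, which yields the matching upper and lower estimates. Note that what you actually prove, $b_{\gamma}(x) \doteq_{3} h(x) - h(\gamma(0))$, is the correct form of the statement: the $+h(\gamma(0))$ in \eqref{height busemann equation} is a sign slip (harmless, since the lemma is only applied with $h(\gamma(0)) = 0$), and the paper's own computation, carried out carefully, produces the same minus sign you obtained.
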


\begin{proof}
Since both $b_{\gamma}$ and $h$ are $1$-Lipschitz and the edges of $X$ have unit length, it suffices to prove the estimate \eqref{height busemann equation} on the vertices of $X$ with the constant $1$ instead of $3$. Let $z \in \bar{Z}$ be the anchoring point for $\gamma$. Let $w \in V$ be an arbitrary  vertex and let $\sigma: [0,\infty) \rightarrow X$ be an anchored descending geodesic ray in $X$ starting at $w$ and anchored at the point $y \in Z$ associated to $w$, as constructed in Lemma \ref{second height connection}.  The sequences of vertices $\{\gamma(n)\}_{n=0}^{\infty}$ on $\gamma$ satisfies $h(\gamma(n)) = h(\gamma(0)) - n$ since $\gamma$ is a descending geodesic ray, and the same holds for $\sigma$. We let $n$ be any integer large enough that $h(\gamma(n)) \leq h(w)$ and $\frac{\tau}{3}\alpha^{-h(\gamma(n))} > d(y,z)$ and observe that if we define $k_{n} = h(w)-h(\gamma(n))$ then $h(\sigma(k_{n})) = h(\gamma(n))$. Then by Lemma \ref{bounded distance vertical} we have $|\gamma(n)\sigma(k_{n})| \leq 1$. Since $\sigma(k_{n})$ is joined to $w$ by a vertical geodesic of length $k_{n}$, it follows immediately that
\[
h(w)-h(\gamma(n)) \leq |\gamma(n)w| \leq h(w)-h(\gamma(n))+1,
\]
and therefore, since $h(\gamma(n)) = h(\gamma(0)) -n$, 
\[
|\gamma(n)w|  \doteq_{1} h(w)+ n - h(\gamma(0)). 
\]
This implies that
\[
|\gamma(n)w|  - n \doteq_{1} h(w)-h(\gamma(0)).
\]
By letting $n \rightarrow \infty$ we conclude that
\[
b_{\gamma}(w) \doteq_{1} h(w) - h(\gamma(0)),
\]
which gives the desired result. 
\end{proof}

In particular, for an anchored descending geodesic ray $\gamma$ with $h(\gamma(0)) = 0$, Lemma \ref{height busemann} shows that $b_{\gamma} \doteq_{3} h$. We fix such a descending geodesic ray $\gamma$ for the remainder of this section and write $b:=b_{\gamma}$ for the associated Busemann function. Let $\omega \in \p X$ be the point corresponding to the equivalence class of $\gamma$ in the Gromov boundary of $X$; note that Lemma \ref{bounded distance vertical} shows that all anchored descending geodesic rays belong to the equivalence class $\omega$ defined by $\gamma$. Our final goal in this section is to show that the boundary $\p_{\omega}X$ of $X$ relative to $\omega$ can be canonically identified with the completion $\bar{Z}$ of $Z$ in such a way that the extension of the metric $d$ to $\bar{Z}$ is a visual metric on $\p_{\omega}X$ based at $\omega$ with parameter $\log \alpha$.

We remark that the rough equality $b \doteq_{3} h$ implies that $(x|y)_{b} \doteq_{3} (x|y)_{h}$ for all $x,y \in X$ as well, so that in particular the conclusions of Lemma \ref{connected filling} hold with $b$ replacing $h$ and $(v|w)_{b}$ replacing $(v|w)_{h}$ everywhere, at the cost of adding $6$ to the constant $c(\alpha,\tau)$ there and multiplying the constant $C(\alpha,\tau)$ by $\alpha^{6}$. We will use this observation without further comment below. 

For each point $z \in \bar{Z}$ we fix an ascending geodesic ray $\gamma_{z}:[0,\infty) \rightarrow X$ anchored at $z$, as given by Lemma \ref{second height connection}. We define a map $\psi: \bar{Z} \rightarrow \p_{\omega} X$ by setting $\psi(z) = [\gamma_{z}]$, i.e., $\psi(z)$ is the equivalence class in $\p_{\omega}X$ defined by the geodesic ray $\gamma_{z}$. Implicit in this definition is the fact that we cannot have $[\gamma_{z}] = \omega$ for any $z \in \bar{Z}$, as it is easy to see from the fact that $h$ is $1$-Lipschitz that ascending geodesic rays cannot be at bounded distance from descending geodesic rays. We also note that if $\gamma$ is any other ascending geodesic ray anchored at $z \in \bar{Z}$ then we must have $[\gamma] = [\gamma_{z}]$: for $k \in \Z$ sufficiently large the unique vertices $v_{k} \in \gamma \cap V_{k}$ and $w_{k} \in \gamma_{z} \cap V_{k}$ must satisfy $|v_{k}w_{k}| \leq 1$ since $z \in B(v_{k}) \cap B(w_{k})$, hence the geodesic rays $\gamma$ and $\gamma_{z}$ are at a bounded distance from one another. The map $\psi$ can thus equivalently be thought of as sending $z \in \bar{Z}$ to the equivalence class of all ascending geodesic rays anchored at $z$.

\begin{prop}\label{identify boundary}
The map $\psi: \bar{Z} \rightarrow \p_{\omega}X$ defines an identification of $\bar{Z}$ with $\p_{\omega}X$. Under this identification the metric $d$ on $\bar{Z}$ defines a visual metric on $\p_{\omega}X$ with parameter $\log \alpha$ and comparison constant depending only on $\alpha$ and $\tau$.  
\end{prop}

\begin{proof}
Let $x,y \in Z$ be given and let $\gamma_{x}$ and $\gamma_{y}$ be ascending geodesic rays anchored at $x$ and $y$ respectively. For $n \in \Z$ sufficiently large we let $x_{n} \in \gamma_{x} \cap V_{n}$ and $y_{n} \in \gamma_{y} \cap V_{n}$ be the unique vertices on these rays at height $n$. By Lemma \ref{branch estimate} we have
\begin{equation}\label{branch use}
\alpha^{-(x_{n}|y_{n})_{b}} \asymp_{C(\alpha,\tau)} d(x_{n},y_{n}) + \alpha^{-n}. 
\end{equation}
Since $x \in B(x_{n})$ and $y \in B(y_{n})$ we have $d(x,x_{n}) < \tau \alpha^{-n}$ and $d(y,y_{n}) < \tau \alpha^{-n}$. Hence, by letting $n \rightarrow \infty$ in \eqref{branch use} and using Lemma \ref{busemann inequality}, we conclude that 
\begin{equation}\label{phi visual}
\alpha^{-(\psi(x)|\psi(y))_{b}} \asymp_{C(\alpha,\tau)} d(x,y). 
\end{equation}
It follows immediately that $\psi: \bar{Z} \rightarrow \p_{\omega}X$ is injective. To complete the proof of the proposition it suffices to show that $\psi$ is surjective, as the estimate \eqref{phi visual} then shows that the metric $d$ on $\bar{Z}$ defines a visual metric on $\p_{\omega}X$ with parameter $\log \alpha$ and comparison constant depending only on $\alpha$ and $\tau$ when we use $\psi$ to identify $\bar{Z}$ with $\p_{\omega}X$.

We recall from Proposition \ref{convergence Busemann} that $\p_{\omega}X$ can be defined as equivalence classes of sequences $\{x_{n}\}$ in $X$ such that $(x_{m}|x_{n})_{b} \rightarrow \infty$ as $m,n \rightarrow \infty$, with two sequences $\{x_{n}\}$, $\{y_{n}\}$ being equivalent if $(x_{n}|y_{n})_{b} \rightarrow \infty$ as $n \rightarrow \infty$. Since $b$ is $1$-Lipschitz we can always choose these sequences to consist of vertices in $X$ by replacing $x_{n}$ with a nearest vertex $v_{n}$.

Thus let $\{v_{n}\}$ be a sequence of vertices defining a point $\xi$ of $\p_{\omega}X$. Let $\{z_{n}\}$ be the associated sequence of points in $Z$. By Lemma \ref{branch estimate} we have 
\[
\alpha^{-(v_{n}|v_{m})_{b}} \asymp_{C(\alpha,\tau)} d(z_{n},z_{m}) + \alpha^{-\min\{b(v_{n}),b(v_{m})\}}. 
\]
Since $(v_{n}|v_{m})_{b} \rightarrow \infty$ and $b(v_{n}) \rightarrow \infty$, it follows immediately that $\{z_{n}\}$ is a Cauchy sequence in $Z$ and therefore converges to a point $z \in \bar{Z}$. We claim that $\psi(z) = \xi$. 

Let $\gamma_{z}$ be an ascending geodesic ray anchored at $z$ and let $\{w_{n}\}$ be the sequence of vertices on $\gamma_{z}$ starting from its initial point. When considered as points of $Z$ this sequence of vertices must satisfy $d(w_{n},z) \rightarrow 0$ since $\gamma_{z}$ is ascending and anchored at $z$. This implies that $d(w_{n},z_{n}) \rightarrow 0$. Since by Lemma \ref{branch estimate} we have
\[
\alpha^{-(v_{n}|w_{n})_{b}} \asymp_{C(\alpha,\tau)} d(z_{n},w_{n}) + \alpha^{-\min\{b(v_{n}),b(w_{n})\}},
\]
we conclude that $(v_{n}|w_{n})_{b} \rightarrow \infty$ as $n \rightarrow \infty$. Hence $\{v_{n}\}$ and $\{w_{n}\}$ define the same point of $\p_{\omega}X$, i.e., $\psi(z) = \xi$. We conclude that $\psi$ is surjective. 
\end{proof}

\section{Uniformizing the hyperbolic filling}\label{sec:uniform filling}
This final section is devoted to proving Theorem \ref{filling theorem}. We retain all hypotheses and notation from the previous section. In particular we let $(Z,d)$ be a metric space and let $X$ be a hyperbolic filling of $Z$ with parameters $\alpha > 1$ and $\tau > \min\{3,\alpha/(\alpha-1)\}$ as in the previous section. We let $h: X \rightarrow \R$ be the height function and set $\rho(x) = \alpha^{-h(x)}$. We write $X_{\rho}$ for the conformal deformation of $X$ with conformal factor $\rho$, $d_{\rho}$ for the metric on $X_{\rho}$, and $\l_{\rho}$ for lengths of curves measured in the metric $d_{\rho}$. Since $h$ is $1$-Lipschitz the density $\rho$ satisfies the Harnack inequality \eqref{Harnack} with $\e = \log \alpha$. 

In the notation of Remark \ref{flexibility}, Lemma \ref{height busemann} shows that we have $h \in \mathcal{B}_{3}(X)$. Clearly $X$ is geodesic and complete, and Proposition \ref{hyperbolicity filling} shows that $X$ is $\delta$-hyperbolic with $\delta = \delta(\alpha,\tau)$. Thus to prove Theorem \ref{filling theorem} it suffices to show that $X$ is $\frac{1}{2}$-roughly starlike from $\omega$ and that $\rho$ is a GH-density with constant $M = M(\alpha,\tau)$, as it then follows for a Busemann function $b \in \mathcal{B}(X)$ with $b \doteq_{3} h$ that $\rho_{\e,b}$ is a GH-density with constant $M = M(\alpha,\tau)$ as well, where $\e = \log \alpha$. The conclusions of Theorem \ref{filling theorem} can then be derived from the fact that the metrics on $X_{\e,b}$ and $X_{\rho}$ are $\alpha^{3}$-biLipschitz to one another by the identity map on $X$.  

We first look at rough starlikeness from the distinguished point $\omega \in \p X$ corresponding to the equivalence class of all anchored descending geodesic rays in $X$.   

\begin{lem}\label{starlike filling}
The hyperbolic filling $X$ is $\frac{1}{2}$-roughly starlike from $\omega$. 
\end{lem}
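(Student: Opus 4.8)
The plan is to verify the two conditions of Definition \ref{def:rough star} separately. For condition (2), let $\xi \in \p_\omega X$ be arbitrary; by Proposition \ref{identify boundary} we may identify $\xi$ with a point $z \in \bar Z$. I would construct the geodesic line $\sigma:\R \to X$ by splicing together two anchored descending geodesic rays: one anchored at $z$ (the point corresponding to $\xi$) and one anchored at the anchoring point $z_0 \in \bar Z$ of the fixed ray $\gamma$ that defines $\omega$. More precisely, using Lemma \ref{second height connection} choose a descending geodesic ray $\sigma^+$ anchored at $z$ with $h(\sigma^+(0)) = 0$, and recall $\gamma$ is our fixed anchored descending ray with $h(\gamma(0)) = 0$. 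The concatenation of the reverse of $\gamma$ with $\sigma^+$ will be a geodesic line once I check that the two rays stay a bounded distance apart only near their common height-$0$ level and actually diverge; Lemma \ref{bounded distance vertical} gives that at every sufficiently negative height $k$ the corresponding vertices $v_k \in \sigma^+$ and $w_k \in \gamma$ satisfy $v_k \sim w_k$, so in fact below some height the two rays can be taken to pass through vertices at distance $\le 1$. I would adjust the construction so that $\gamma$ and $\sigma^+$ actually share a common vertex at some sufficiently negative height (this is possible since Lemma \ref{second height connection} lets us prescribe a vertex the ray passes through), making the concatenation a genuine vertical geodesic line with $\gamma|_{(-\infty,0]} \in \omega$ and $\sigma^+ \in \xi$ — note $\sigma^+ \in \xi$ because its vertices converge to $z$ in $Z$, which is exactly the identification of Proposition \ref{identify boundary}, and $\gamma|_{(-\infty,0]}\in\omega$ by definition of $\omega$ together with Lemma \ref{bounded distance vertical}.

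For condition (1), let $x \in X$ be arbitrary. I would first reduce to the case that $x$ is a vertex $v = (y,m) \in V$: since every point of $X$ lies on an edge, it is within distance $\frac12$ of a vertex, and this is precisely why the rough starlikeness constant is $\frac12$ rather than $0$. Given the vertex $v$, Lemma \ref{second height connection} produces a descending geodesic ray $\sigma^+$ with $\sigma^+(0) = v$ anchored at $y$. On the other side, I extend $v$ upward: I need a vertical geodesic ray going up from $v$ towards $\omega$. This is where I would again invoke the last sentence of Lemma \ref{second height connection}, but in its reverse-orientation form discussed just after that lemma, applied to produce an anchored descending ray through $v$ — equivalently an \emph{ascending} ray from $v$ — and then splice. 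Concretely, take the geodesic line $\sigma:\R\to X$ through $v$ furnished by Lemma \ref{second height connection} (anchored at $y$, with $h(\sigma(t)) = t - (\text{const})$ and $\sigma$ passing through $v$); this line has $\sigma|_{(-\infty,\cdot]}$ descending and anchored, hence in $\omega$ by Lemma \ref{bounded distance vertical}, and $x = v$ lies exactly on it, so $\dist(x,\sigma) = 0 \le \frac12$. For a general point $x$ on an edge with a vertex endpoint $v$, the same line $\sigma$ through $v$ gives $\dist(x,\sigma) \le \frac12$.

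I expect the main obstacle to be the gluing argument for condition (2): ensuring that the concatenation of a ray in $\omega$ with a ray in $\xi$ is actually a \emph{geodesic} line, not merely a quasigeodesic. The cleanest route is to use the prescribed-vertex clause of Lemma \ref{second height connection} to force $\gamma$ (or a substitute anchored descending ray in $\omega$, which by Lemma \ref{bounded distance vertical} is equally valid for representing $\omega$) and $\sigma^+$ to pass through a \emph{common} vertex $u$ at a sufficiently negative height — guaranteed to exist because $v_k \sim w_k$ for all sufficiently negative $k$ by Lemma \ref{bounded distance vertical}, so we may route both rays through the same cone vertex below that level. Once they share vertex $u$, the union of the two vertical edge paths (one from $u$ up to the $\xi$-side, reparametrized on $[0,\infty)$, and one from $u$ up the $\omega$-side, reparametrized on $(-\infty,0]$) is a vertical edge path, and all vertical edge paths are geodesics in $X$ by the discussion in Section \ref{sec:filling}. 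A secondary point to handle carefully is that Remark \ref{distinct} requires $\xi \ne \omega$, which holds since $\xi \in \p_\omega X = \p X \setminus\{\omega\}$, and one checks the two sub-rays genuinely limit to different boundary points (the $\xi$-side vertices converge in $Z$ to $z$, the $\omega$-side is anchored and hence in $\omega$, and $\omega \notin \bar Z$ under the identification of Proposition \ref{identify boundary}, so these are distinct).
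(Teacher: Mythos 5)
Your treatment of condition (1) is essentially correct and is the paper's argument: the vertical geodesic line through a vertex $v$ furnished by Lemma \ref{second height connection} has an anchored descending half, hence lies in the class $\omega$ by Lemma \ref{bounded distance vertical}, and every point of $X$ is within $\frac{1}{2}$ of a vertex. The gap is in condition (2), where you have the orientation of the filling inverted. A \emph{descending} geodesic ray anchored at $z$ has vertices $(z_m,m)$ with $m\to-\infty$ and $d(z_m,z)<\frac{\tau}{3}a^{m}$, a quantity that blows up; such a ray represents $\omega$ (Lemma \ref{bounded distance vertical}), not $\xi$, and its vertices do not converge to $z$. It is the \emph{ascending} half, with heights tending to $+\infty$ and $d(z_n,z)<\frac{\tau}{3}a^{n}\to 0$, that converges to $\xi$ by the argument of Proposition \ref{identify boundary}. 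Consequently your ``concatenation of the reverse of $\gamma$ with $\sigma^{+}$'' joins two rays that both tend to $\omega$, and in your final version the two edge paths both go up from the common vertex $u$, so their union has a strict height minimum at $u$ and is not a geodesic (it is the height-monotone vertical paths that are geodesics), while its $\omega$-side is the finite segment from $u$ up to $\gamma(0)$ rather than a ray converging to $\omega$ (which sits at height $-\infty$). Separately, the prescribed-vertex clause of Lemma \ref{second height connection} only allows forcing the line anchored at $z$ through a vertex of the form $(z,m)$, so routing it through a vertex of the fixed ray $\gamma$ would require an extra adjacency/splicing argument that you have not supplied: $v_k\sim w_k$ from Lemma \ref{bounded distance vertical} is adjacency, not equality, and passing to a common cone point via Lemma \ref{cone adjacent} leaves you without a descending continuation along $\gamma$.

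More importantly, the splice is unnecessary, and this is exactly what the paper's proof exploits: Lemma \ref{second height connection} already produces a single vertical geodesic \emph{line} anchored at $z$. Its descending half is an anchored descending ray and hence belongs to $\omega$ by Lemma \ref{bounded distance vertical} --- membership in the class $\omega$ does not require the ray to merge with, or even approach, the fixed ray $\gamma$ --- while its ascending half converges to $\xi$ as above. Parametrizing by height gives the required geodesic line from $\omega$ to $\xi$ at once, so the ``main obstacle'' you identify (geodesicity of a concatenation) never arises.
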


\begin{proof}
Let $v \in V$ be a vertex of $X$ with associated point $z \in Z$. Let $\gamma:\R \rightarrow X$ be an ascending vertical geodesic line through $v$ that is anchored at $z$, as constructed in Lemma \ref{second height connection}, parametrized by arclength such that $\gamma(0) = v$. We put $\bar{\gamma}(t) = \gamma(-t)$ for $t \in \R$. Then $\bar{\gamma}|_{[0,\infty)}$ is an anchored descending geodesic ray and therefore belongs to the equivalence class $\omega$ by Lemma \ref{bounded distance vertical}. This shows that any vertex of $X$ lies on a geodesic line starting at $\omega$. Since any point in $X$ is within distance $\frac{1}{2}$ of some vertex, condition (1) of Definition \ref{def:rough star} follows. 

For condition (2) we use the identification of $\p_{\omega} X$ with $\bar{Z}$ from Proposition \ref{identify boundary}. Let $z \in \bar{Z}$ be given and let $\gamma: \R \rightarrow X$ be a vertical geodesic line anchored at $z$ and parametrized such that $h(\gamma(t)) = t$, as constructed in Lemma \ref{second height connection}. By the construction of the identification $\varphi$ in Proposition \ref{identify boundary} the geodesic ray $\gamma|_{[0,\infty)}$ then belongs to the equivalence class of $z$ when $z$ is considered as a point of $\p_{\omega}X$. Putting $\bar{\gamma}(t) = \gamma(-t)$ as above, we also have that $\bar{\gamma}|_{[0,\infty)}$ is a descending geodesic ray anchored at $z$ and therefore belongs to the equivalence class of $\omega$ by Lemma \ref{bounded distance vertical}. Since $z \in \bar{Z}$ was arbitrary, condition (2) follows.  
\end{proof}

We will now show that $\rho$ is a GH-density with constant $M = M(\alpha,\tau)$ depending only on $\alpha$ and $\tau$. We will do this by estimating the distance $d_{\rho}$ between points at sufficiently large scales in $X$ and then using Corollary \ref{criterion}.

\begin{lem}\label{large distance}
Let $x,y \in X$ with $|xy| \geq 1$. Then we have
\begin{equation}\label{filling large}
d_{\rho}(x,y) \asymp_{C(\alpha,\tau)} \alpha^{-(x|y)_{h}}. 
\end{equation}
Consequently $\rho$ is a GH-density with constant $M = M(\alpha,\tau)$. 
\end{lem}

\begin{proof}
The bound on $d_{\rho}(x,y)$ from above follows from Lemma \ref{epsilon geodesic} applied with $\e = \log \alpha$,  since there is a Busemann function $b \in \mathcal{B}(X)$ such that $b \doteq_{3} h$. Hence it suffices to establish the lower bound. Observe that for an edge $g$ of $X$, considered as a path between its endpoints $v$ and $w$ and assuming the orientation in which $h(v) \leq h(w)$, when $h(v) = h(w) = k$ we have 
\begin{equation}\label{no telescope}
\l_{\rho}(g) = \alpha^{-k}.
\end{equation}
On the other hand, since $B(v) \cap B(w) \neq \emptyset$ we have $d(v,w) < 2\tau \alpha^{-k}$. It follows that
\[
\l_{\rho}(g) > C(\alpha,\tau)^{-1}d(v,w)
\]
Similarly, when $h(v) = k$ and $h(w) = k+1$ we have
\begin{equation}\label{telescope}
\l_{\rho}(g) = \frac{1}{\log \alpha}(\alpha^{-k}-\alpha^{-k-1}) = \frac{1-\alpha^{-1}}{\log \alpha}\alpha^{- k},
\end{equation}
while $B(v) \cap B(w) \neq \emptyset$ implies again that $d(v,w) < 2\tau \alpha^{-k}$. Thus in this case we also have
\[
\l_{\rho}(g) > C(\alpha,\tau)^{-1}d(v,w).
\]

Now let $\gamma$ be a rectifiable curve joining $x$ to $y$. Let $v$ be the first vertex on $\gamma$ met traveling from $x$ to $y$ and let $w$ be the first vertex on $\gamma$ met traveling from $y$ to $x$. We first suppose that $v \neq w$. We then let $\sigma$ be the subcurve of $\gamma$ from $v$ to $w$ starting from this first occurrence of $v$ and ending at this last occurrence of $w$. Let $\{v_{i}\}_{i=0}^{l}$ be the sequence of vertices encountered along the path $\sigma$, noting that by assumption we have $l \geq 1$.  Then from our calculations above we have
\begin{equation}\label{one bound}
\l_{\rho}(\sigma) \geq C(\alpha,\tau)^{-1}\sum_{i=0}^{l-1}d(v_{i},v_{i+1}) \geq C(\alpha,\tau)^{-1}d(v,w).
\end{equation}
On the other hand, since $v \neq w$ the curve $\sigma$ must contain at least one full edge of $X$ with one vertex being $v$ and at least one full edge with one vertex being $w$ (these may be the same edge). Then it follows from \eqref{no telescope} and \eqref{telescope} applied to those edges that
\begin{equation}\label{two bound}
\l_{\rho}(\sigma) \geq C(\alpha,\tau)^{-1}\alpha^{-\min\{h(v),h(w)\}}. 
\end{equation}
By combining \eqref{one bound} and \eqref{two bound} and then using Lemma \ref{branch estimate}, we conclude that
\[
\l_{\rho}(\sigma) \geq C(\alpha,\tau)^{-1}\alpha^{-(v|w)_{h}}.
\]
Since $\sigma$ is a subcurve of $\gamma$ it follows that this inequality holds for $\gamma$ as well. 

Now suppose that $v = w$. Then, since $|xy| \geq 1$, either the initial segment of $\gamma$ from $x$ to $v$ or the final segment of $\gamma$ from $w$ to $y$ has length at least $\frac{1}{2}$ in $X$.  By reversing the roles of $x$ and $y$ if necessary we can assume that the initial segment $\eta$ of $\gamma$ from $x$ to $v$ has length at least $\frac{1}{2}$ in $X$. Since $\eta$ is contained entirely in a single edge of $X$ that has $v$ as a vertex, it follows that
\[
\l_{\rho}(\eta) \geq \alpha^{-h(v)-1}\l(\eta) \geq \frac{1}{2}\alpha^{-h(v)-1},
\]
with $\l(\eta)$ denoting the length of $\eta$ in $X$. Hence 
\[
\l_{\rho}(\gamma) \geq \l_{\rho}(\eta) \geq C(\alpha)^{-1}\alpha^{-(v|w)_{h}},
\]
using that $(v|w)_{h} = h(v)$ since $v = w$. This gives a similar lower bound on $\l_{\rho}(\gamma)$ in this case as well. Minimizing over all rectifiable paths $\gamma$ from $x$ to $y$ then gives in both cases that
\[
d_{\rho}(x,y) \geq C(\alpha,\tau)^{-1}\alpha^{-(v|w)_{h}} \geq C(\alpha,\tau)^{-1}\alpha^{-(x|y)_{h}},
\]
with the second inequality following from the fact that $h$ is $1$-Lipschitz and $|xv| \leq 1$, $|yw| \leq 1$. 

To conclude that $\rho$ is a GH-density we let $b$ be a Busemann function on $X$ such that $b \doteq_{3} h$ as in Lemma \ref{height busemann}. We let $d_{\e,b}$ be the distance obtained on $X$ through conformal deformation with the conformal factor
\[
\rho_{\e,b}(x) = e^{-\e b(x)} = \alpha^{-b(x)},
\] 
where $\e = \log \alpha$. Then $d_{\e,b}$ is $\alpha^{3}$-biLipschitz to $d_{\rho}$ by the identity map on $X$. Consequently the comparison \ref{filling large} holds with $d_{\e,b}$ replacing $d_{\rho}$. We can then apply Corollary \ref{criterion} to conclude that $\rho_{\e,b}$ is a GH-density with constant $M = M(\alpha,\tau)$, noting that $X$ is $\delta$-hyperbolic with $\delta = \delta(\alpha,\tau)$ and $\frac{1}{2}$-roughly starlike from $\omega$. The GH-inequality \eqref{first GH} for $\rho$ then follows immediately from the $\alpha^{3}$-biLipschitz comparison of $d_{\e,b}$ to $d_{\rho}$.
\end{proof}

This completes the proof of Theorem \ref{filling theorem} aside from the final assertion regarding the identification of $\p X_{\rho}$ with $\bar{Z}$ given by the combination of Lemma \ref{boundary identification} and Proposition \ref{identify boundary} is biLipschitz. This is shown below. 

\begin{prop}\label{bilip boundary}
The identification $\p X_{\rho} \cong \bar{Z}$ is biLipschitz with biLipschitz constant $L = L(\alpha,\tau)$ depending only on $\alpha$ and $\tau$. 
\end{prop}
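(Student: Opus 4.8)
The plan is to obtain this as the $q = \e$ case of Theorem \ref{identification theorem}. Setting $\e = -\log a$, I would first check that all hypotheses of that theorem hold for $X_\e$: the filling $X$ is complete and geodesic by construction; it is $\delta$-hyperbolic with $\delta = \delta(a,\tau)$ by Proposition \ref{hyperbolicity filling}; it is $\tfrac12$-roughly starlike from $\omega$ by Lemma \ref{starlike filling}; and $\rho_\e$ is admissible for $X$ with constant $M = M(a,\tau,\e)$ by Lemma \ref{weighted estimate}. Since the value $\e = -\log a$ is itself determined by $a$, each of the constants $\delta$, $K = \tfrac12$, $\e$, $M$ ultimately depends only on $a$ and $\tau$.

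Next I would bring in Proposition \ref{identify boundary}, which identifies $\p_\omega X$ with $\bar Z$ and asserts that the extension of the metric $d$ to $\bar Z$ is a visual metric on $\p_\omega X$ based at $\omega$ with parameter $-\log a$. Unwinding that proof, one has $d(\xi,\zeta) \asymp_{C(a,\tau)} a^{(\xi|\zeta)_b} = \alpha_{b,q}(\xi,\zeta)$ for $\xi,\zeta \in \p_\omega X$ with $q = -\log a$; in particular $d$ is a visual metric in the constrained sense of Remark \ref{visual constant}, with comparison constant to $\alpha_{b,q}$ depending only on $a$ and $\tau$. This is exactly the input required to apply Theorem \ref{identification theorem} with a visual metric whose comparison constant is controlled.

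Applying Theorem \ref{identification theorem} with $\alpha = d$ and $q = \e = -\log a$ then shows that the canonical map $\iota \colon \p_\omega X \to \p X_\e$ of Lemma \ref{boundary identification} induces an $L$-biLipschitz map $(\p_\omega X, d) \to (\p X_\e, d_\e^{q/\e})$, and since $q/\e = 1$ this target is just $(\p X_\e, d_\e)$. The constant is $L = L(\delta,K,\e,M,q)$, and by the first paragraph all of $\delta, K, \e, M, q$ depend only on $a$ and $\tau$, so $L = L(a,\tau)$. To finish, I would observe that the map $\iota$ of Lemma \ref{boundary identification} is precisely the identification $\p X_\e \cong \p_\omega X$ whose composition with the identification $\p_\omega X \cong \bar Z$ of Proposition \ref{identify boundary} is the identification $\p X_\e \cong \bar Z$ named in the statement; transporting the metric $d$ on $\p_\omega X$ to $\bar Z$ via that second identification is a tautology, so the biLipschitz estimate above is exactly the assertion of the proposition.

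There is no substantial obstacle here: the work is purely in the assembly. The only points needing care are the bookkeeping ones — verifying that the identification manufactured by Theorem \ref{identification theorem} literally coincides with the composite identification referenced in the statement, and checking that, once $\e$ is pinned to $-\log a$, the dependence of $L$ on the five parameters $\delta,K,\e,M,q$ collapses to dependence on $a$ and $\tau$ alone. No estimate beyond those already proved in Sections \ref{sec:uniformize} and \ref{sec:filling} is required.
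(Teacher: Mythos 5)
Your proposal is correct and follows essentially the same route as the paper: equip $\p_{\omega}X \cong \bar{Z}$ with the visual metric of parameter $-\log a$ from Proposition \ref{identify boundary} (which is the extension of $d$, with comparison constant controlled as in Remark \ref{visual constant}), apply Theorem \ref{identification theorem} with $q = \e = -\log a$, and note that the hypotheses and all constants reduce to dependence on $a$ and $\tau$ via Proposition \ref{hyperbolicity filling}, Lemma \ref{starlike filling}, and Lemma \ref{weighted estimate}. Your bookkeeping of the constant dependence is, if anything, slightly more explicit than the paper's.
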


\begin{proof}
We consider $\p_{\omega} X$ as equipped with the visual metric with parameter $\log \alpha$ defined by Proposition \ref{identify boundary}, which coincides with the extension of the metric $d$ on $Z$ to the completion $\bar{Z}$ under the identification $\psi: \bar{Z} \rightarrow \p_{\omega}X$ of that proposition. By Theorem \ref{identification theorem} applied with this visual metric and $\e = \log \alpha$, the identification $\varphi:\p_{\omega}X \rightarrow \p X_{\rho}$ is biLipschitz. Hence the induced identification $\bar{Z} \cong \p X_{\e}$ given by $\varphi \circ \psi$ is also biLipschitz. Furthermore all of the parameters involved in the biLipschitz constant can be taken to depend only on $\alpha$ and $\tau$ by the results of this section. 
\end{proof}

\begin{rem}\label{bounded filling}
For $k \in \Z$ there is a canonical correspondence between hyperbolic fillings with fixed parameters $\alpha,\tau > 1$ of the metric spaces $(Z,d)$ and $(Z,\alpha^{-k}d)$ given by considering $\alpha^{-n}$-separated sets in $(Z,d)$ as $\alpha^{-n-k}$-separated sets in $(Z,\alpha^{-k}d)$. Thus when $Z$ is bounded there is no harm in assuming that $\mathrm{diam}\, Z < 1$ by multiplying the metric by $\alpha^{-k}$ for $k$ sufficiently large. The hyperbolic filling $X$ can then be written as $X = X_{\geq 0} \cup X_{\leq 0}$, where $X_{\geq 0} = h^{-1}([0,\infty))$ is the set of all points of nonnegative height and $X_{\leq 0} = h^{-1}((-\infty,0])$ is the set of all points of nonpositive height. The condition $\mathrm{diam}\, Z < 1$ implies that the vertex sets $V_{n} = \{v_{n}\}$ for $n \leq 0$ consist only of a single point, and in particular $X_{\leq 0}$ is simply a descending geodesic ray starting from $v_{0}$. The space $X$ is isometric to the ray augmentation of $X_{\geq 0}$ based at $v_{0}$, in the language of Definition \ref{ray augment}. 

The graph $X_{\geq 0}$ is essentially the hyperbolic filling of $Z$ constructed in \cite{BBS21}, with the exceptions that they have a stricter condition for the placement of vertical edges and that they require an additional nesting condition $S_{m} \subset S_{n}$ for $m \leq n$. They uniformize this filling for \emph{all} $\tau > 1$ using the density $\rho_{\e,v_{0}}(x) = e^{-\e |xv_{0}|}$ for $0 < \e \leq \log \alpha$, for which it is easy to see that $\rho_{\e,v_{0}} = \rho_{\e}|_{X \geq 0}$, where $\rho_{\e}(x) = e^{-\e h(x)}$ and $h: X \rightarrow \R$ is the height function. When $\tau$ satisfies \eqref{tau requirement} we can use Theorem \ref{filling theorem} to deduce their results from ours, up to some minor differences in the definition of the hyperbolic filling. When $\tau$ is close to $1$ it is possible to realize trees as hyperbolic fillings \cite[Theorem 7.1]{BBS21}, whereas when $\tau$ satisfies \eqref{tau requirement} the hyperbolic filling is only a tree if $Z$ consists of a single point (by Lemma \ref{cone adjacent}). 
\end{rem}

\bibliographystyle{plain}
\bibliography{Uniform}

\end{document}